\numberwithin{equation}{subsubsection}
\newtheorem{theorem}[subsubsection]{Theorem}
\newtheorem{corollary}[subsubsection]{Corollary}
\newtheorem{lemma}[subsubsection]{Lemma}
\newtheorem{proposition}[subsubsection]{Proposition}
\theoremstyle{definition}
\newtheorem{remark}[subsubsection]{Remark}
\newcommand{\ra}{\rightarrow}
\newcommand{\lra}{\longrightarrow}
\def\AAA{\mathbb{A}}
\def\CC{\mathbb{C}}
\def\ZZ{\mathbb{Z}}
\def\calA{\mathcal{A}}
\def\calF{\mathcal{F}}
\def\calH{\mathcal{H}}
\def\calJ{\mathcal{J}}
\def\calL{\mathcal{L}}
\def\calM{\mathcal{M}}
\def\calR{\mathcal{R}}
\def\calS{\mathcal{S}}
\def\calT{\mathcal{T}}
\def\calV{\mathcal{V}}
\def\calW{\mathcal{W}}
\def\scrR{\mathscr{R}}
\def\scrS{\mathscr{S}}
\def\scrV{\mathscr{V}}
\def\scrW{\mathscr{W}}
\def\wt{\widetilde}
\def\wh{\widehat}
\def\alg_k{\AAA\mathbbm{l}\mathbbm{g}_{/k}}
\DeclareMathOperator{\End}{End}
\DeclareMathOperator{\Hom}{Hom}
\DeclareMathOperator{\id}{id}
\DeclareMathOperator{\Image}{Im}
\DeclareMathOperator{\Ind}{Ind}
\DeclareMathOperator{\Res}{Res}
\DeclareMathOperator{\Irr}{Irr}
\DeclareMathOperator{\Irrt}{Irr_{temp}}
\DeclareMathOperator{\Para}{\Phi}
\DeclareMathOperator{\Parat}{\Phi_{temp}}
\DeclareMathOperator{\Herm}{Herm}
\DeclareMathOperator{\Isom}{Isom}
\DeclareMathOperator{\Tr}{Tr}
\DeclareMathOperator{\disc}{disc}
\DeclareMathOperator{\Nm}{Nm}
\begin{document}

\title{Local Langlands Correspondence for Unitary Groups via Theta Lifts}
\author{Rui Chen \and Jialiang Zou}

\begin{abstract}
Using the theta correspondence, we extend the classification of irreducible representations of quasi-split unitary groups (the so-called local Langlands correspondence) due to \cite{MR3338302} to non quasi-split unitary groups. We also prove that our classification satisfies some good properties, which characterize it uniquely. In particular, this paper provides an alternative approach to the works of \cite{kaletha2014endoscopic} and \cite{MR3839702}.
\end{abstract}

\maketitle

\section{Introduction}
In his monumental book \cite{MR3135650}, Arthur gave a complete description of the automorphic discrete spectra of quasi-split orthogonal groups and symplectic groups, by using the stable trace formula and the theory of endoscopy. One of the main local theorems in that book is the local Langlands correspondence (``LLC'' for short), which gives a classification of irreducible tempered representations of quasi-split classical groups. Following Arthur's method, Mok established the same results for quasi-split unitary groups \cite{MR3338302}. To extend these results to non quasi-split classical groups, one can use the stable trace formula \`a la Arthur. This was partially carried out by Kaletha-M\'{\i}nguez-Shin-White in \cite{kaletha2014endoscopic} for unitary groups. In particular, they established the LLC for all unitary groups, in the enhanced version of Vogan. M{\oe}glin-Renard also have some related results \cite{MR3839702}. Both these two papers use very difficult techniques.\\

However, the theta correspondence provides us a rather cheap way to establish, or, ``transfer'' results from one group to another group. Indeed, this idea has been used in many papers, for example, \cite{MR2800725}, \cite{MR2999299}, \cite{MR3866889}, and a recent paper \cite{MR4157428}. This paper is another exploitation of this idea. The main goal of this paper is to construct a (Vogan version) LLC for unitary groups over a $p$-adic field, based on the LLC for quasi-split unitary groups. We will also prove that this LLC satisfies several desired properties; these properties will uniquely determine the LLC (see Theorem \ref{MainTheorem}). Among these properties, the most important one is so-called ``local intertwining relations'' (``LIR'' for short), which allows us to distinguish representations in a tempered $L$-packet by using (normalized) intertwining operators. We would like to remark here that the LIR we used here is the same as in \cite{MR3573972}, which is a little bit different from the LIR formulated by Arthur/ Mok/ KMSW (see Remark \ref{RMK.ExistedResults&LIRs}). As in other instances where the LLC was shown using the theta correspondence (such as \cite{MR2800725} and \cite{MR2999299}), we do not show the (twisted) endoscopic character identities for the $L$-packets we constructed. To show that our $L$-packets satisfy the endoscopic character identities, one would need to appeal to the stable trace formula (or a simple form of it), as was done in \cite{MR3267112} and \cite{luo2020endoscopic}. Although essentially there is no new result in this paper, it provides an alternative approach to the works of \cite{kaletha2014endoscopic} and \cite{MR3839702}.\\

We would like to mention some related works. In \cite{MR3573972}, Gan-Ichino proved the so-called Prasad conjecture, which describes the almost equal rank theta lifts in terms of the LLC; similarly, in \cite{MR3714507}, Atobe-Gan described the theta lifts of tempered representations in terms of the LLC. In this paper, we ``turn the table around'', namely, imitating the prediction of Prasad conjecture, we construct a Vogan version LLC for unitary groups. We also write a parallel paper \cite{MR4308934}, in which we use the same method to deal with the even orthogonal groups (we write it separately to avoid making notations too complicated). In a sequel to this paper, we carry out the global counterpart of this paper and establish the Arthur's multiplicity formula for the tempered part of automorphic discrete spectra of even orthogonal groups/ unitary groups.\\

We now give a summary of the layout of this paper. We formulate the main theorem (i.e. the desired LLC, Theorem \ref{MainTheorem}) in Section 2, taking the chance to recall some results from \cite{MR3338302} that we are using. After recalling some basics of theta correspondence in Section 3, we give our construction in Section 4, and prove several properties of the desired LLC along the way. Then in Section 5 we recall some results from \cite{MR3573972}, which will be the ingredients in the proof of the LIR in Section 6. Finally in Section 7, with the help of the LIR, we are able to finish the proof of the main theorem. To keep the paper in a reasonable length, we omit many repeated details. Readers can refer to the arXiv version of this paper if they would like the full details.

\section*{Acknowledgments}
We would like to thank our supervisor Wee Teck Gan for many useful advice. We also thank Hiraku Atobe, Atsushi Ichino, Wen-Wei Li, and Sug Woo Shin for helpful conversations during the conference ``Workshop on Shimura varieties, representation theory and related topics, 2019'' in Hokkaido University. We thank Hiroshi Ishimoto, Caihua Luo, Xiaolei Wan, and Chuijia Wang for helpful discussions. Both authors are supported by an MOE Graduate Research Scholarship.

\section*{Notation}
We set up some notations at the begining of this paper. Let $F$ be a non-Archimedean local field of characteristic $0$ and residue characteristic $p$. Let $E$ be a quadratic field extension of $F$ and let $\omega_{E/F}$ be the quadratic character of $F^\times$ associated to $E/F$ by local class field theory. We denote by $c$ the non-trivial Galois automorphism of $E$ over $F$. Let $\Tr_{E/F}$ and $\Nm_{E/F}$ be the trace and norm maps from $E$ to $F$. We denote by $E^1$ the subgroup of $E^\times$ consisting of norm $1$ elements. We choose an element $\delta\in E^\times$ such that $\Tr_{E/F}(\delta)=0$. We write $|\cdot|=|\cdot|_E$ for the normalized absolute value on $E$. If $\psi$ is an additive character of $F$, we shall use $\psi_E$ to denote the additive character of $E$ defined by $\psi_E=\psi\circ\Tr_{E/F}$. If $\pi$ is a representation of some group $G$, we shall use $\pi^\vee$ to denote the contragredient of $\pi$.

\section{Local Langlands Correspondence}
In this section, we formulate the desired LLC for unitary groups.

\subsection{Hermitian and skew-Hermitian spaces}
Fix $\varepsilon=\pm1$. Let $V$ be a finite dimensional vector space over $E$ equipped with a non-degenerate $\varepsilon$-Hermitian form
\begin{equation*}
\langle\cdot,\cdot\rangle_V: V\times V\lra E.
\end{equation*}
Put $n=\dim V$ and $\disc V=(-1)^{(n-1)n/2}\cdot\det V$, so that
\begin{equation*}
\disc V\in\begin{cases}F^\times/\Nm_{E/F}(E^\times)\quad &\textit{if }\varepsilon=+1;\\
\delta^n\cdot F^\times/\Nm_{E/F}(E^\times)\quad &\textit{if }\varepsilon=-1.
\end{cases}
\end{equation*}
We define $\epsilon(V)=\pm1$ by
\begin{equation*}
\epsilon (V)=\begin{cases}\omega_{E/F}(\disc V)\quad &\textit{if }\varepsilon=+1;\\
\omega_{E/F}(\delta^{-n}\cdot\disc V)\quad &\textit{if }\varepsilon=-1.
\end{cases}
\end{equation*}
Given a positive integer $n$, there are precisely two isometry classes of $n$-dimensional $\varepsilon$-Hermitian spaces $V$, which are distinguished from each other by their signs $\epsilon(V)$. Note that 
\begin{itemize}
\item $\epsilon(V)$ depends on the choice of $\delta$ if $\varepsilon=-1$ and $n$ is odd;
\item $V^+$ always has the maximal possible Witt index $[\dim V^+/2]$.
\end{itemize}
Let $U(V)$ be the unitary group of $V$. If $n=0$, we interpret $U(V)$ as the trivial group $\{1\}$.\\

Sometimes we also need to consider a tower of $\varepsilon$-Hermitian spaces. Let $V_{an}$ be an anisotropic space over $E$, and for $r\geq 0$, let
\[
	V_{an,r}=V_{an}\oplus\calH^r,
\]
where $\calH$ is the ($\varepsilon$-Hermitian) hyperbolic plane. Let $U(V_{an,r})$ be the unitary group associated to $V_{an,r}$. The collection
\[
	\{V_{an,r}~|~r\geq 0\}
\]
is called a Witt tower of spaces. We note that any given $\varepsilon$-Hermitian space $V$ is a member of a unique Witt tower of spaces $\calV$.

\subsection{Langlands parameters and component groups}
Let $W_E$ be the Weil group of $E$ and $WD_E=W_E\times SL_2(\CC)$ the Weil-Deligne group of $E$. Recall that an $L$-parameter for the unitary group $U(V)$ is an $n$-dimensional conjugate self-dual representation of $WD_E$ 
\[
	\phi:WD_E\lra GL_n(\CC)
\]
with sign $(-1)^{n-1}$. Let $\Para(n)$ be the set of equivalence classes of $L$-parameters for unitary groups of $n$ variables. Given $\phi\in\Para(n)$, we may decompose it into a direct sum
\begin{equation*}
\phi=\bigoplus_im_i\phi_i
\end{equation*}
with pairwise inequivalent irreducible representations $\phi_i$ of $WD_E$ and multiplicities $m_i$. We say that $\phi$ is square-integrable if it is multiplicity-free and $\phi_i$ is conjugate self-dual with sign $(-1)^{n-1}$ for all $i$, and we say that $\phi$ is tempered if the image of $W_E$ is bounded.\\

For an $L$-parameter $\phi$ for $U(V)$, we can define the component group $\calS_\phi$ associated to $\phi$ following \cite{MR3202556} Section 8. If we write $\phi=\oplus_im_i\phi_i$, then $\calS_\phi$ has an explicit description of the form
\begin{equation*}
\calS_\phi=\prod_j(\ZZ/2\ZZ)a_j
\end{equation*} 
with a canonical basis $\{a_j\}$, where the product ranges over all $j$ such that $\phi_j$ is conjugate self-dual with sign $(-1)^{n-1}$. For $a=a_{j_1}+\cdots+a_{j_r}\in\calS_\phi$, we put 
\[
	\phi^a=\phi_{j_1}\oplus\cdots\oplus\phi_{j_r}.
\]
We shall let $z_\phi$ denote the image of $-1\in GL_n(\CC)$ in $\calS_\phi$. More explicitly, we have
\begin{equation*}
z_\phi=(m_ja_j)\in\prod_j(\ZZ/2\ZZ)a_j.
\end{equation*} 
Let $\overline{\calS_{\phi}}=\calS_{\phi}/\langle z_{\phi}\rangle$. Then the canonical epimorphism $\calS_\phi\twoheadrightarrow\overline{\calS_\phi}$ induces an inclusion
\begin{equation*}
\widehat{\overline{\calS_\phi}}\hookrightarrow\widehat{\calS_\phi}.
\end{equation*}
Here, we denote by $\widehat{A}$ the Pontryagin dual of an abelian group $A$.

\subsection{Whittaker data}\label{whit.data}
To describe our main result, we need to choose a Whittaker datum of $U(V^+)$, which is a conjugacy class of pairs $(N,\xi)$, where
\begin{itemize}
\item $N$ is the unipotent radical of a Borel subgroup of the quasi-split unitary group $U(V^+)$,
\item $\xi$ is a generic character of $N$.
\end{itemize}
When $n$ is odd, such a datum is canonical. When $n=2m$ is even, as explained in \cite{MR3202556} Section 12, it is determined by the choice of an $\Nm_{E/F}(E^\times)$-orbit of non-trivial additive characters
\[
	\begin{cases}
		\psi^E:E/F\lra\CC^\times\quad &\textit{if }\varepsilon=+1;\\
		\psi:F\lra\CC^\times\quad &\textit{if }\varepsilon=-1.
	\end{cases}
\]
According to this choice, we write
\[
	\begin{cases}
		\scrW_{\psi^E}\quad &\textit{if }\varepsilon=+1;\\
		\scrW_\psi\quad &\textit{if }\varepsilon=-1.
	\end{cases}
\]
for the corresponding Whittaker datum.\\

Assume that $\varepsilon=+1$ for a while. So that $V^+$ is a Hermitian space. By choosing a non-zero trace zero element $\delta\in E$, we can define a skew-Hermitian space $W^+=\delta\cdot V^+$, which is the space $V^+$ equipped with the skew-Hermitian form $\delta\cdot\langle\cdot,\cdot\rangle_{V^+}$. Then $U(V^+)$ and $U(W^+)$ are physically equal as subgroups of $GL(V^+)$. Let $\psi$ be a non-trivial additive character of $F$, and
\[
	\psi^{E}=\psi\left(\frac{1}{2}\Tr_{E/F}(\delta\cdot~)\right)
\]
be a character of $E$ trivial on $F$. Then, there is a Whittaker datum $\scrW_{\psi^{E}}$ of $U(V^+)$, and a Whittaker datum $\scrW_\psi$ of $U(W^+)$. We have
\[
	\scrW_{\psi^{E}}=\scrW_\psi.
\]

Now we return to the general case. Sometimes we need to consider the LLC for two (or more) unitary groups associated to spaces in a same Witt tower simultaneously, hence we must choose a Whittaker datum of each group in a compatible way. Let $\scrW$ be a Whittaker datum of the unitary group $U(V^+)$. Then, for each space $\wt{V}^+$ in the Witt tower containing $V^+$, we may choose a Whittaker datum of $U(\wt{V}^+)$ as follows. Let $\psi$ (or $\psi^E$) be a non-trivial character of $F$ (or $E/F$), such that
\[
	\scrW=\scrW_\psi\quad\left(\textit{or}\quad\scrW=\scrW_{\psi^E}\right).
\]
Then there is an obvious choice of the Whittaker datum of $U(\wt{V}^+)$, namely, the Whittaker datum associated to $\psi$ (or $\psi^E$). By abuse of notation, we shall also denote this Whittaker datum of $U(\wt{V}^+)$ by $\scrW$.

\subsection{Local factors}\label{Def.LocalFactor}
To characterize the correspondence that will be established later, we need to introduce two representation-theoretic local factors.\\

The first one is the standard $\gamma$-factor. Let $V$ be an $n$-dimensional vector space over $E$ equipped with a non-degenerate $\varepsilon$-Hermitian form, $\pi$ be an irreducible smooth representation of $U(V)$, and $\chi$ be a character of $E^\times$. Following \cite{MR2192828}, \cite{MR3166215}, one can define the standard $\gamma$-factor
\[
	\gamma(s,\pi,\chi,\psi)
\]  
by using the doubling zeta integral. We remark here that in this paper we shall use the definition in \cite{MR3166215} Section 10, which is slightly different from the definition in \cite{MR2192828} (see \cite{MR3166215} page 546 or \cite{MR4131991} Remark 5.4 for the modification of the definition and explanations). The standard $\gamma$-factors satisfy many good properties, for exmaple, ``Ten Commandments''.\\

The second one we want to introduce is the Plancherel measure. Let $\psi$ be a non-trivial additive character of $F$. Again let $V$ be an $n$-dimensional vector space over $E$ equipped with a non-degenerate $\varepsilon$-Hermitian form, $\pi$ be an irreducible smooth representation of $U(V)$, and $\tau$ be an irreducible smooth representation of $GL_k(E)$. For any $s\in\CC$, we put $\tau_s\coloneqq \tau\otimes|\det|^s$. Let $\wt V$ be the $(n+2k)$-dimensional $\varepsilon$-Hermitian space in the Witt tower containing $V$, and $P=M_PU_P$ be a maximal parabolic subgroup of $U(\wt V)$ with Levi component $M_P$ and unipotent radical $U_P$, such that
\[
	M_P\simeq GL_k(E)\times U(V).
\]
Consider the (normalized parabolic) induced representation
\begin{equation*}
\Ind_P^{U(\wt V)}(\tau_s\boxtimes\pi).
\end{equation*}
Let $\overline{P}=M_PU_{\overline{P}}$ be the parabolic subgroup of $U(\wt V)$ opposite to $P$, and $U_{\overline{P}}$ be the unipotent radical of $\overline{P}$. Fix a Haar measure $du\times d\overline{u}$ on $U_P\times U_{\overline{P}}$ as in \cite{MR3166215} Appendix B (this Haar measure depends on the choice of the additive character $\psi$). We define an intertwining operator
\[
	\calM_{\overline{P}|P}(\tau_s\boxtimes\pi):\Ind_P^{U(\wt V)}(\tau_s\boxtimes\pi)\lra\Ind_{\overline{P}}^{U(\wt V)}(\tau_s\boxtimes\pi)
\]
by (the meromorphic continuation of) the integral
\[
	\calM_{\overline{P}|P}(\tau_s\boxtimes\pi)\varPhi_s(g)=\int_{U_{\overline{P}}}\varPhi_s(ug)du.
\]
Then there exists a meromorphic function $\mu_\psi(\tau_s\boxtimes\pi)$ of $s$ such that
\[
	\calM_{P|\overline{P}}(\tau_s\boxtimes\pi)\circ\calM_{\overline{P}|P}(\tau_s\boxtimes\pi)=\mu_\psi(\tau_s\boxtimes\pi)^{-1}.
\]
In this paper, by ``Plancherel measures'', we mean the functions of the form $\mu_\psi(\tau_s\boxtimes\pi)$.\\

Given a representation $\rho$ of $WD_E$, one can define the Galois-theoretic $\gamma$-factor 
\[
	\gamma(s,\rho,\psi_E)
\]
as usual. We denote by $As^+$ the Asai representation of the $L$-group of $\Res_{E/F}GL_k$ and $As^-=As^+\otimes\omega_{E/F}$ its twist. Readers may refer to \cite{MR3202556} Section 7 for these representations.

\subsection{Main Theorem}\label{sect.desi}
Now we can formulate our desired LLC for unitary groups.
\begin{theorem}\label{MainTheorem}
There is a canonical finite-to-one surjection
\begin{equation*}
\calL:\Irr U(V^+)\sqcup\Irr U(V^-)\lra\Para(n),
\end{equation*}
where $V^+$ and $V^-$ are the $n$-dimensional $\varepsilon$-Hermitian spaces with $\epsilon(V^+)=+1$ and $\epsilon(V^-)=-1$. For each $L$-parameter $\phi$, we denote the inverse image of $\phi$ by $\Pi_\phi$, 
and call $\Pi_\phi$ the $L$-packet associated to $\phi$. For each $L$-packet $\Pi_\phi$, there is a bijection (depends on the choice of a Whittaker datum $\scrW$ of $U(V^+)$)
\begin{equation*}
\calJ_\scrW:\Pi_\phi\lra\widehat{\calS_\phi}.
\end{equation*}
We shall use $\pi(\phi,\eta)$ to denote the element in $\Pi_\phi$ corresponding to $\eta$ (with respect to $\scrW$).\\

This assignment $\pi\mapsto\left(\phi=\calL(\pi),\eta=\calJ_{\scrW}(\pi)\right)$ satisfies following properties:
\begin{enumerate}
\item The map $\calL$ preserves square-integrability.
\item The map $\calL$ preserves temperedness.
\item The map $\calL$ respects standard $\gamma$-factors, in the sense that
\begin{equation*}
\gamma(s,\pi,\chi,\psi)=\gamma(s,\phi\chi,\psi_E)
\end{equation*} 
for any $\pi\in\Irr U(V^\epsilon)$ whose parameter is $\phi$, and any character $\chi$ of $E^\times$.

\item The map $\calL$ respects Plancherel measures, in the sense that
\begin{align*}
\mu_\psi(\tau_s&\boxtimes\pi)=\gamma(s,\phi_\tau\otimes\phi^\vee,\psi_E)\cdot\gamma(-s,\phi_\tau^\vee\otimes\phi,\psi_E^{-1})\\
&\times\gamma(2s,As^{(-1)^n}\circ\phi_\tau,\psi)\cdot\gamma(-2s,As^{(-1)^n}\circ\phi_\tau^\vee,\psi^{-1})
\end{align*}
for any $\pi\in\Irr U(V^\epsilon)$ whose parameter is $\phi$, and any irreducible square-integrable representation $\tau$ of $GL_k(E)$ with $L$-parameter $\phi_\tau$. In particular, the Plancherel measures are invariants of an $L$-packet. Namely, if $\pi_1$, $\pi_2$ has the same $L$-parameter $\phi$, then we have
\begin{equation*}
\mu_\psi(\tau_s\boxtimes\pi_1)=\mu_\psi(\tau_s\boxtimes\pi_2)
\end{equation*}
for any irreducible square-integrable representation $\tau$ of $GL_k(E)$.

\item $\pi=\pi(\phi,\eta)$ is a representation of $U(V^\epsilon)$ if and only if $\eta(z_\phi)=\epsilon$.

\item Assume that $\phi$ is a tempered $L$-parameter, then there is an unique $\scrW$-generic representation of $U(V^+)$ in $\Pi_\phi$, which corresponds to the trivial character of $\calS_\phi$.

\item $\textbf{(Local Intertwining Relation)}$
Assume that 
\begin{equation*}
\phi=\phi_\tau\oplus\phi_0\oplus(\phi_\tau^c)^\vee,
\end{equation*}
where $\phi_\tau$ is an irreducible tempered representation of $WD_E$ which corresponds to an irreducible (unitary) discrete series representation $\tau$ of $GL_k(E)$ and $\phi_0$ is a tempered element in $\Phi(n-2k)$. So there is a natural embedding $\calS_{\phi_0}\hookrightarrow\calS_\phi$. Let $\pi_0=\pi(\phi_0,\eta_0)$ be an irreducible tempered representation of $U(V_0^\epsilon)$, where $V_0^\epsilon$ is the $(n-2k)$-dimensional $\varepsilon$-Hermitian space with sign $\epsilon$. There is a maximal parabolic subgroup of $U(V^\epsilon)$, say $P$, with Levi component $M$, so that
\begin{equation*}
M\simeq GL_k(E)\times U(V_0^\epsilon).
\end{equation*}
Then the induced representation $\Ind_P^{U(V^\epsilon)}(\tau\boxtimes\pi_0)$ has a decomposition
\begin{equation*}
\Ind_P^{U(V^\epsilon)}(\tau\boxtimes\pi_0)=\bigoplus_\eta\pi(\phi,\eta),
\end{equation*}
where the sum ranges over all $\eta\in\widehat{\calS_\phi}$ such that $\eta\big|_{\calS_{\phi_0}}=\eta_0$. Moreover, if $\phi_\tau$ is conjugate self-dual, let
\begin{equation*}
R(w,\tau\boxtimes\pi_0)\in\End_{U(V^\epsilon)}\left(\Ind_P^{U(V^\epsilon)}(\tau\boxtimes\pi_0)\right)
\end{equation*}
be the normalized intertwining operator to be defined later in Section \ref{IO}, where $w$ is the unique non-trivial element in the relative Weyl group for $M$. Then the restriction of $R(w,\tau\boxtimes\pi_0)$ to $\pi(\phi,\eta)$ is the scalar multiplication by
\begin{equation*}
\begin{cases}
\epsilon^k\cdot\eta(a_\tau) &\textit{if } \phi_\tau\textit{ has sign }(-1)^{n-1};\\
\epsilon^k &\textit{if } \phi_\tau\textit{ has sign }(-1)^n,
\end{cases}
\end{equation*}
where $a_\tau$ is the element in $\calS_\phi$ corresponding to $\phi_\tau$.
\item $\textbf{(Compatibility with Langlands quotients)}$
Assume that 
\begin{equation*}
\phi=(\phi_{\tau_1}|\cdot|^{s_1}\oplus\cdots\oplus\phi_{\tau_r}|\cdot|^{s_r})\oplus\phi_0\oplus\left((\phi_{\tau_1}|\cdot|^{s_1}\oplus\cdots\oplus\phi_{\tau_r}|\cdot|^{s_r})^c\right)^\vee,
\end{equation*}
where for $i=1,\cdots, r$, $\phi_{\tau_i}$ is an irreducible tempered representation of $WD_E$ which corresponds to an irreducible (unitary) discrete series representation $\tau_i$ of $GL_{k_i}(E)$, and $s_i$ is a real number such that
\begin{equation*}
s_1\geq\cdots\geq s_r>0;
\end{equation*}
$\phi_0$ is a tempered element in $\Para(n-2k)$, where $k=k_1+\cdots+k_r$. So there is a natural isomorphism $\calS_{\phi_0}\simeq\calS_\phi$. Let $\eta\in\widehat{\calS_{\phi}}$, and $\eta_0\coloneqq\eta\big|_{\calS_{\phi_0}}$. Let $\pi_0=\pi(\phi_0,\eta_0)$ be an irreducible tempered representation of $U(V_0^\epsilon)$, where $V_0^\epsilon$ is the $(n-2k)$-dimensional $\varepsilon$-Hermitian space with sign $\epsilon$. Then there is a parabolic subgroup of $U(V^\epsilon)$, say $P$, with Levi component $M$, so that
\begin{equation*}
M\simeq GL_{k_1}(E)\times\cdots\times GL_{k_r}(E)\times U(V_0^\epsilon),
\end{equation*}
and $\pi(\phi,\eta)$ is the unique irreducible quotient of the standard module 
\begin{equation*}
\Ind_P^{U(V^\epsilon)}\left(\tau_1|\det|^{s_1}\boxtimes\cdots\boxtimes\tau_r|\det|^{s_r}\boxtimes\pi_0\right).
\end{equation*}

\item If $\pi=\pi(\phi,\eta)$, and $\chi$ is a character of $E^1$, then the representation $\pi\chi\coloneqq\pi\otimes(\chi\circ\det)$ has $L$-parameter $\phi\cdot\widetilde{\chi}$ and the associated character $\eta_{\pi\chi}=\eta$, where $\widetilde{\chi}$ is the base change of $\chi$, i.e. the pull-back of $\chi$ along
\begin{align*}
E^\times&\rightarrow E^1\\
x&\mapsto x/c(x),
\end{align*}
and we use the obvious isomorphism between $\calS_\phi$ and $\calS_{\phi\cdot\widetilde{\chi}}$ to identify them.

\item If $\pi=\pi(\phi,\eta)$, then the contragredient representation $\pi^\vee$ of $\pi$ has $L$-parameter $\phi^\vee$ and associated character $\eta_{\pi^\vee}=\eta\cdot\nu$, where $\nu$ is a character of $\calS_\phi$ given by
\begin{equation*}
\nu(a)=\begin{cases}
\omega_{E/F}(-1)^{\dim \phi^a} &\textit{if }n\textit{ is even},\\
1 &\textit{if }n\textit{ is odd}
\end{cases}
\end{equation*}
for $a\in\calS_\phi$. Here we use the obvious isomorphism between $\calS_\phi$ and $\calS_{\phi^\vee}$ to identify them.
\end{enumerate}
\end{theorem}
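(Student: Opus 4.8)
The plan is to construct the maps $\calL$ and $\calJ_\scrW$ directly via the theta correspondence, using the (already established) LLC for quasi-split unitary groups of \cite{MR3338302} as the base case, and then verify properties (1)--(10) one at a time. The starting point is the dichotomy/conservation relation for theta lifts between unitary groups: given an irreducible representation $\pi$ of $U(V)$ with $V$ in a Witt tower $\calV$, exactly one of the two Witt towers of skew-Hermitian (resp.\ Hermitian) spaces $W$ with $\dim W$ in the ``almost equal rank'' range receives a nonzero theta lift $\theta(\pi)$. Following the recipe predicted by the Prasad conjecture (proved by Gan--Ichino \cite{MR3573972} in almost equal rank), I would first treat the quasi-split member $U(V^+)$: here $\calL$ and $\calJ_\scrW$ are \emph{defined} to be the maps of \cite{MR3338302}. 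For the non quasi-split member $U(V^-)$, I would define $\calL(\pi)$ (and the character $\eta$) by going up the Witt tower to a quasi-split group via theta lift, reading off the parameter there, and then using the explicit formula of the Prasad conjecture to descend: concretely, if $\theta_W(\pi)$ has parameter $\phi'$ on the quasi-split group $U(W)$, then $\phi = \calL(\pi)$ is obtained by stripping off (or adjoining) the explicit character $\chi_V$ that appears in the Prasad recipe, and the component-group character $\eta$ is transported via the canonical identification of $\calS_{\phi'}$ and $\calS_\phi$ dictated by that recipe. One must check this is well-defined (independent of which space $W$ in the receiving tower one uses, and compatible as one moves within a fixed tower); this is where the precise bookkeeping of \cite{MR3573972} Section~5 enters.

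Once the maps are defined, properties (1)--(5) and (9)--(10) should follow from the corresponding known properties on the quasi-split side combined with the compatibility of theta lifts with the relevant operations. Specifically: (1) and (2) follow because theta lifting in the ``going-up'' direction preserves square-integrability/temperedness in the almost equal rank range (Atobe--Gan \cite{MR3714507}); (3) follows from the behaviour of doubling $\gamma$-factors under theta lift (the ``see-saw'' identity relating $\gamma(s,\pi,\chi,\psi)$ on $U(V)$ to $\gamma(s,\theta(\pi),\chi',\psi)$ on $U(W)$), reduced to the known identity of \cite{MR3338302}; (4) is the statement that Plancherel measures are preserved, which again reduces to the quasi-split case via the compatibility of $\calM_{\overline P|P}$ with the theta correspondence on parabolically induced representations, or alternatively can be extracted from the same doubling machinery; (5) is essentially the definition, since the sign $\epsilon(V)$ is exactly what the Prasad recipe tracks through $z_\phi$; (6) follows from the fact that the quasi-split group $U(V^+)$ is the one in each pair carrying a Whittaker datum and that theta lift sends the generic member of a tempered packet on the lower group to the generic member upstairs (Gan--Ichino), pinned down to the trivial character by the normalization of $\calJ_\scrW$ inherited from \cite{MR3338302}; (9) and (10) (twisting by characters of $E^1$ and contragredient) follow because theta lift intertwines $\pi \mapsto \pi\chi$ and $\pi\mapsto\pi^\vee$ with the analogous operations on $U(W)$ (up to an explicit twist of the additive character and the similitude character), and the resulting sign $\nu$ is computed from the discrepancy in the Prasad recipe for $\pi$ versus $\pi^\vee$.

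The genuinely hard part is property (7), the Local Intertwining Relation, together with (8) (compatibility with Langlands quotients), since these are what actually \emph{characterize} the packets and make $\calJ_\scrW$ a bijection rather than merely a well-defined map. The strategy here, carried out in Sections 5--7 as the introduction indicates, is: first reduce (8) to (7) by the standard Langlands-quotient argument (analyzing the unique irreducible quotient of a standard module and the embedding of component groups when one adds a segment $\phi_\tau|\cdot|^s$ with $s>0$); then prove (7) by relating the normalized intertwining operator $R(w,\tau\boxtimes\pi_0)$ on $\Ind_P^{U(V^\epsilon)}(\tau\boxtimes\pi_0)$ to the corresponding operator upstairs on $U(W)$ via theta lift and a see-saw/doubling computation, so that the scalar $\epsilon^k\eta(a_\tau)$ is matched with the known LIR scalar on the quasi-split side from \cite{MR3338302}/\cite{MR3573972}. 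The main obstacle is controlling the normalizing factors of the intertwining operators through the theta correspondence: one needs the precise relation between the normalization of $R(w,-)$ used here (the one of \cite{MR3573972}, which differs slightly from Arthur/Mok/KMSW, cf.\ Remark \ref{RMK.ExistedResults&LIRs}) and the Plancherel-measure/$\gamma$-factor data of property (4), and then one must track the extra powers of $\epsilon$ and the character $\eta(a_\tau)$ that arise from the Weil representation and the going-up operation. This is exactly the computation done in \cite{MR3573972} for the equal-rank case, and the bulk of the work is to show it propagates correctly; once (7) is in hand, the bijectivity of $\calJ_\scrW$ and the uniqueness clause of the theorem follow by the usual induction on $\dim V$, reducing an arbitrary tempered packet to discrete-series packets where the count of $\widehat{\calS_\phi}$ matches the count of constituents of the relevant induced representations.
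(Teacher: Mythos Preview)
Your overall strategy---bootstrap via theta correspondence from groups where the LLC is already known---is exactly that of the paper, but two structural points in your proposal are out of order in a way that would cause trouble if you tried to execute them.

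First, you miss the paper's key reduction. The paper does \emph{not} lift from $U(V^-)$ to a quasi-split group in one move; instead it observes (Theorem~\ref{TheoremOdd}) that for \emph{odd} $n$ the two inner forms $U(V^+)$ and $U(V^-)$ are physically the same subgroup of $GL(V^+)$, so the full Vogan LLC for all odd unitary groups follows immediately from Mok. The theta correspondence is then used to transfer from \emph{even} unitary groups (where the non quasi-split form is genuinely different) to \emph{odd} unitary groups (where the LLC is now in hand for both signs). This is what makes the lift land somewhere useful regardless of which $\epsilon'$ the conservation relation forces you into; your version (``going up the Witt tower to a quasi-split group'') would not guarantee a quasi-split target without this observation.

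Second, your logical ordering of bijectivity and the LIR is inverted. You propose to deduce that $\calJ_\scrW$ is a bijection \emph{from} property~(7), by induction reducing to discrete series. The paper does the opposite: it first proves $|\Pi_\phi|=|\widehat{\calS_\phi}|$ by a direct counting argument (Corollary~\ref{Size.Packet}), using only Howe duality, the conservation relation, and the theta-lift injections $\Pi_\phi\hookrightarrow\Pi_{\phi^+}$ between packets on the even and odd side. This count is then an \emph{input} to the proof of the LIR (e.g.\ Corollary~\ref{LIR.Reducibility} uses it to pin down the number of constituents of the induced representation). Without the count already in hand, you have no control over how many irreducible summands appear in $\Ind_P(\tau\boxtimes\pi_0)$ on the non quasi-split side, and the LIR statement itself becomes hard to formulate. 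Relatedly, property~(8) is not reduced to (7): it is the \emph{definition} of $\calL$ and $\calJ_\scrW$ on non-tempered representations, imposed after the tempered case is settled.

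Your treatment of (3), (4), (7), (9), (10) via the Gan--Ichino equivariant map and see-saw identities is essentially what the paper does; the main correction needed is to rearrange the argument so that the packet-size count precedes the LIR, and to route the theta lift through odd unitary groups rather than aiming directly for a quasi-split target.
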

\begin{remark}
Here, the formulations of some properties involve the LLC for a smaller unitary group $U(V_0^\epsilon)$. Hence we need to specify which Whittaker datum of $U(V_0^+)$ we are using. Notice that $V_0^+$ is in the Witt tower containing $V^+$, as explicated in Section \ref{whit.data}, the Whittaker datum $\scrW$ of $U(V^+)$ uniquely determines a Whittaker datum of $U(V_0^+)$, which we shall also denote by $\scrW$. The LLC for $U(V_0^\epsilon)$ we are using is with respect to this Whittaker datum $\scrW$.
\end{remark}

Following the method of Arthur, Mok established the LLC for quasi-split unitary groups in \cite{MR3338302} (supplemented by some results of many others):
\begin{theorem}\label{LLC.QS.Mok}
There is a canonical finite-to-one surjection
\begin{equation*}
\calL^+:\Irr(U(V^+))\lra\Para(n),
\end{equation*}
where $V^+$ is the $n$-dimensional $\varepsilon$-Hermitian space with $\epsilon(V^+)=1$. For an $L$-parameter $\phi$, let $\Pi_\phi^+$ be the inverse image of $\phi$ under $\calL^+$. For each $\Pi_\phi^+$, we have a bijection (depends on the choice of a Whittaker datum $\scrW$ of $U(V^+)$)
\begin{equation*}
\calJ^+_\scrW:\Pi_\phi^+\lra\widehat{\overline{\calS_\phi}}.
\end{equation*}
This assignment $\pi\mapsto\left(\phi=\calL^+(\pi),\eta=\calJ_{\scrW}^+(\pi)\right)$ satisfies all properties listed in Theorem \ref{MainTheorem}.
\end{theorem}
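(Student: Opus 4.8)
The construction of $\calL^+$ and of the Whittaker-normalized bijections $\calJ^+_\scrW : \Pi_\phi^+ \to \widehat{\overline{\calS_\phi}}$ is precisely the content of Mok's main local theorem in \cite{MR3338302} (which in turn incorporates results of many others on generic parameters, tempered $L$-packets, and the Langlands classification). So the plan is to \emph{deduce} properties (1)--(11) of Theorem \ref{MainTheorem} from the statements proved in \cite{MR3338302}, doing the necessary bookkeeping where the formulations differ. First, properties (1), (2), and (8) are immediate: Mok constructs tempered packets first and then passes to general $\phi$ via the Langlands classification, so $\calL^+$ preserves square-integrability and temperedness by construction, and $\pi(\phi,\eta)$ is by definition the Langlands quotient of the standard module built from $(\tau_1,\dots,\tau_r,\pi_0)$. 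Property (5) is vacuous on the quasi-split side, since $\widehat{\overline{\calS_\phi}}$ consists exactly of those $\eta$ with $\eta(z_\phi)=1=\epsilon(V^+)$; property (6), the existence and uniqueness of the $\scrW$-generic member corresponding to the trivial character, is Mok's generic-packet theorem (the whole point of the Whittaker normalization of $\calJ^+_\scrW$).

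The substantive point is property (7), the local intertwining relation. Mok proves an LIR (one of the two main local theorems of \cite{MR3338302}), but as flagged in Remark \ref{RMK.ExistedResults&LIRs}, the normalization of the intertwining operator $R(w,\tau\boxtimes\pi_0)$ used here follows \cite{MR3573972} rather than the Arthur/Mok normalization. The plan is therefore to start from Mok's relation and carefully compare the two normalizations: both are obtained from the unnormalized $\calM_{\overline P|P}$ by dividing by a normalizing factor, and the ratio of the Arthur/Mok factor (built from Langlands--Shahidi local factors / Harish-Chandra $\mu$-functions) to the Gan--Ichino factor (built from the local factors of Section \ref{Def.LocalFactor}) is an explicit elementary function of $s$ whose value at the relevant point accounts exactly for the sign $\epsilon^k$ and the splitting into the two cases according to the sign of $\phi_\tau$. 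This comparison of normalizations, together with the action of the MVW-type symmetry relating the two Hermitian spaces, is where most of the actual work lies; it is a finite computation but the most delicate step.

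Properties (3) and (4) are handled via the doubling method. The standard $\gamma$-factor $\gamma(s,\pi,\chi,\psi)$ defined through the doubling zeta integral is, for quasi-split $U(V^+)$, matched with the Galois $\gamma$-factor $\gamma(s,\phi\chi,\psi_E)$ by the Lapid--Rallis theory and its refinements (this compatibility is in fact one of the identities used to pin down Mok's correspondence); so (3) follows. For (4), one expresses Harish-Chandra's Plancherel measure $\mu_\psi(\tau_s\boxtimes\pi)$ in terms of Langlands--Shahidi local factors attached to the adjoint action of the Levi $GL_k(E)\times U(V)$ on the Lie algebra of $U_P$; that adjoint representation decomposes so that the two "$GL$-part" factors contribute $\gamma(s,\phi_\tau\otimes\phi^\vee,\psi_E)\gamma(-s,\phi_\tau^\vee\otimes\phi,\psi_E^{-1})$ and the remaining summand is the Asai representation $As^{(-1)^n}$, contributing the two Asai $\gamma$-factors. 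Invoking Mok's identification of the Langlands--Shahidi factors with their Galois counterparts gives the displayed formula, and the "invariant of the packet" assertion is then formal.

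Finally, properties (9), (10), (11) are elementary compatibilities. For (9), twisting by $\chi\circ\det$ with $\chi$ a character of $E^1$ corresponds on the dual side to tensoring $\phi$ with the base change $\widetilde\chi$ (since base change intertwines the two twisting operations), this induces a canonical isomorphism $\calS_\phi\simeq\calS_{\phi\widetilde\chi}$, and neither the doubling $\gamma$-factors nor the intertwining operators are affected, so $\eta$ is unchanged. For (10) and (11), the contragredient of $\pi$ is the MVW-conjugate of $\pi$, whose parameter is $\phi^\vee$ and whose packet assignment differs from that of $\pi$ only through the change of Whittaker datum induced by $\psi\mapsto\psi^{-1}$; tracking this change of Whittaker datum through $\calJ^+_\scrW$ produces exactly the twisting character $\nu$, which is $\omega_{E/F}(-1)^{\dim\phi^a}$ in the even case and trivial in the odd case (where the Whittaker datum is canonical). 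The one point to verify with care is that these dependencies are exactly as in \cite{MR3338302}, which they are.
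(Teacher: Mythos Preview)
Your proposal is broadly correct in spirit, but it is worth noting that the paper does not actually \emph{prove} this theorem: it is stated as Mok's result \cite{MR3338302}, ``supplemented by some results of many others,'' and taken as input. The paper's own contribution is the extension to non-quasi-split groups. That said, your sketch of how the individual properties are extracted from the literature is largely accurate, with two specific points where the paper is more precise about attribution than you are.

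First, for property (7), the LIR: rather than carrying out the normalization comparison you outline, the paper simply cites Atobe \cite{MR3801418}, who proved that for tempered $L$-packets the LIR in the Gan--Ichino normalization (the one used here) is a consequence of Mok's LIR. So the ``finite computation'' you describe is outsourced. Also, your remark about ``the sign $\epsilon^k$'' and ``the two Hermitian spaces'' is off-target in the quasi-split setting: here $\epsilon = \epsilon(V^+) = +1$, so $\epsilon^k = 1$ and there is no second space in play; the only issue is matching the normalized operators, not the sign.

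Second, for property (10), the contragredient formula: the paper attributes this in the quasi-split case to Kaletha \cite{MR3194648}, who proved it via endoscopic character identities. Your MVW-plus-change-of-Whittaker-datum argument is essentially the strategy the paper uses later (Proposition \ref{LLC.Contragredient}) to extend (10) to the non-quasi-split case, but for $U(V^+)$ itself it is simply cited.

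Finally, a small slip: you refer to properties ``(9), (10), (11),'' but Theorem \ref{MainTheorem} lists only ten.
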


\begin{remark}\label{RMK.ExistedResults&LIRs}
\begin{enumerate}
\item There are also some existed results on the LLC for (non quasi-split) unitary groups, see \cite{kaletha2014endoscopic}, and \cite{MR3839702}. Their methods are based on trace formulas and endoscopic character identities. But we are not sure if all properties listed in Theorem \ref{MainTheorem} were verified in their works. For example, it seems to the authors that properties $(3)$, $(4)$, $(9)$, and $(10)$ are verified only for quasi-split groups, though it is expected that these properties can be verified through the endoscopic character identities. The approach in this paper is independent with these works.
\item Indeed, the LIR we formulated in Theorem \ref{MainTheorem} is the same as that in \cite{MR3573972}, but is different from the LIR formulated by Mok in \cite{MR3338302} Proposition 3.4.4, or KMSW's version in \cite{kaletha2014endoscopic} Chapter 2. There are several different points between their LIR and the LIR we formulated here: 
\begin{itemize}
	\item their LIR	is formulated for $A$-packets, rather than individual representations;
	\item their LIR is formulated in terms of distributions;
	\item the normalizing fator we used in the definition of the normalized intertwining operator is slightly different from theirs (however our normalized intertwining operator is still the same as theirs, see Remark \ref{IO.Change.Whit.Data}). 
\end{itemize}
In \cite{MR3801418}, Atobe proved that for tempered $L$-packets, the LIR we used here is a consequence of the LIR formulated by Mok/ KMSW. So we shall take it as given for quasi-split unitary groups.
\end{enumerate}
\end{remark}

We emphasize that our proof of Theorem \ref{MainTheorem} relies on Theorem \ref{LLC.QS.Mok}. Firstly we shall extend Mok's result to all odd unitary groups. Observe that when $n$ is odd, we may take $V^-=a\cdot V^+$, where $a\in F^\times\backslash\Nm_{E/F}(E^\times)$. Then $U(V^+)$ and $U(V^-)$ are physically equal as subgroups of $GL(V^+)$, and the identity map between them
induces a bijection
\begin{equation*}
\id^*:\Irr U(V^-)\lra \Irr U(V^+).
\end{equation*}
Under this identification, we can extend the map $\calL^+$ to a map
\begin{equation*}
\calL:\Irr U(V^+)\sqcup\Irr U(V^-)\lra\Para(n)
\end{equation*}
as follows: 
\begin{equation*}
\calL(\pi)=\begin{cases}
                         \calL^+(\pi) &\textit{if } \pi\in\Irr U(V^+);\\
                         \calL^+(\id^*\pi) &\textit{if } \pi\in\Irr U(V^-).

                   \end{cases}
\end{equation*}
Then for each parameter $\phi$, we have
\begin{equation*}
\Pi_\phi=\Pi_\phi^+\sqcup\Pi_\phi^-,
\end{equation*}
where $\Pi_\phi\coloneqq\calL^{-1}(\phi)$, $\Pi_\phi^+\coloneqq(\calL^+)^{-1}(\phi)$ and $\Pi_\phi^-\coloneqq(\id^*)^{-1}(\Pi_\phi^+)$. We can also extend the bijection $\calJ_{\scrW}^+$ to a bijection
\begin{equation*}
\calJ_{\scrW}:\Pi_\phi\lra\widehat{\calS_\phi}
\end{equation*} 
by letting
\begin{equation*}
\calJ_{\scrW}(\pi)=\begin{cases}
                         \calJ_{\scrW}^+(\pi) &\textit{if } \pi\in\Pi_\phi^+;\\
                         \calJ_{\scrW}^+(\id^*\pi)\cdot\eta_{-} &\textit{if } \pi\in\Pi_\phi^-.

                   \end{cases}
\end{equation*}
where $\eta_-$ is a character of $\calS_\phi$ given by
\begin{equation*}
\eta_-(a)=(-1)^{\dim \phi^a}
\end{equation*}
for $a\in\calS_\phi$. One can easily check that $\calL$ and $\calJ_{\scrW}$ give us what we want:
\begin{theorem}\label{TheoremOdd}
Theorem \ref{MainTheorem} holds for $n$ odd.
\end{theorem}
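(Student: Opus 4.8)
The plan is to verify that the maps $\calL$ and $\calJ_{\scrW}$ constructed just above satisfy every property $(1)$--$(11)$ of Theorem \ref{MainTheorem}, reducing each assertion to Mok's Theorem \ref{LLC.QS.Mok} for $U(V^+)$ through the tautological identification $U(V^-)=U(V^+)$ inside $GL(V^-)=GL(V^+)$. The only genuinely new input is a parity fact, which I would record first: for $n$ odd and any $\phi\in\Para(n)$ the element $z_\phi\in\calS_\phi$ is nontrivial, and more precisely $\dim\phi^{z_\phi}$ is odd, so that $\eta_-(z_\phi)=-1$. To see this, write $\phi=\bigoplus_i m_i\phi_i$ and sort the $\phi_i$ into the conjugate self-dual ones of sign $(-1)^{n-1}$ (which give the canonical basis of $\calS_\phi$), the conjugate self-dual ones of sign $(-1)^n$ (necessarily even-dimensional), and the remaining ones, which occur in pairs $\phi_i,(\phi_i^c)^\vee$ of equal multiplicity. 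The last two families contribute an even number to $n$, so $n\equiv\sum_j m_j\dim\phi_j\pmod 2$ where $j$ runs over the first family; since $n$ is odd and $\sum_j m_j\dim\phi_j\equiv\sum_{j:\,m_j\text{ odd}}\dim\phi_j=\dim\phi^{z_\phi}\pmod 2$, this forces $\dim\phi^{z_\phi}$ to be odd.

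Granting this, bijectivity of $\calJ_{\scrW}$ together with properties $(5)$ and $(6)$ follows at once. Composed with the inclusion $\widehat{\overline{\calS_\phi}}\hookrightarrow\widehat{\calS_\phi}$, Mok's bijection $\calJ_{\scrW}^+$ identifies $\Pi_\phi^+$ with $\{\eta\in\widehat{\calS_\phi}:\eta(z_\phi)=1\}$; twisting by $\eta_-$, the map $\calJ_{\scrW}$ sends $\Pi_\phi^-$ bijectively onto $\{\eta:\eta(z_\phi)=-1\}$. As $z_\phi\neq 1$, these two cosets are complementary, so $\calJ_{\scrW}\colon\Pi_\phi\to\widehat{\calS_\phi}$ is a bijection; this is precisely property $(5)$ (the element $\pi(\phi,\eta)$ lies on $U(V^\epsilon)$ iff $\eta(z_\phi)=\epsilon$), and property $(6)$ holds because the unique $\scrW$-generic representation of $U(V^+)$ in $\Pi_\phi$ is the unique $\scrW$-generic member of $\Pi_\phi^+$, which by Mok corresponds to the trivial character of $\overline{\calS_\phi}$, hence to the trivial character of $\calS_\phi$, and $\calJ_{\scrW}$ agrees with $\calJ_{\scrW}^+$ on $\Pi_\phi^+$. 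Finite-to-one surjectivity of $\calL$ is inherited directly from that of $\calL^+$.

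Properties $(1)$--$(4)$ concern only $\calL$, and since $U(V^-)=U(V^+)$ as groups --- and likewise the doubled groups, the Witt towers (up to the harmless global scaling by $a$), the maximal parabolic subgroups, and the parabolically induced representations entering the definitions of the standard $\gamma$-factor and of the Plancherel measure all coincide --- these local factors for $\pi\in\Irr U(V^-)$ equal those for $\id^*\pi\in\Irr U(V^+)$; so $(1)$--$(4)$ are immediate from Theorem \ref{LLC.QS.Mok}. Properties $(7)$--$(11)$ are handled the same way: the induced representation (resp.\ tensor product, resp.\ contragredient) occurring in each is literally the same on $U(V^-)$ as on $U(V^+)$, so Mok's statement transfers once one checks that the $\eta_-$-twist is consistent with the relevant component-group maps. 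The compatibilities needed are that $\phi^a=\phi_0^a$ as representations for $a\in\calS_{\phi_0}\subseteq\calS_\phi$, so that $\eta_-$ for $\phi$ restricts to $\eta_-$ for $\phi_0$ (and likewise matches itself under the canonical isomorphisms $\calS_{\phi_0}\simeq\calS_\phi$, $\calS_\phi\simeq\calS_{\phi\cdot\widetilde{\chi}}$ and $\calS_\phi\simeq\calS_{\phi^\vee}$ of $(8)$, $(9)$, $(10)$, since none of those operations alters the pertinent dimensions), and that in the local intertwining relation $(7)$ one has $\eta_-(a_\tau)=(-1)^{\dim\phi_\tau}=(-1)^k$, which exactly cancels the factor $\epsilon^k=(-1)^k$ by which the $V^-$-normalization differs from the $V^+$-one when $\phi_\tau$ is conjugate self-dual of sign $(-1)^{n-1}$; when $\phi_\tau$ has sign $(-1)^n$ it is even-dimensional, so $\epsilon^k=1$ for both signs and there is nothing to reconcile.

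I do not anticipate any real obstacle: once the parity identity $\eta_-(z_\phi)=-1$ is in place, the entire argument is formal bookkeeping on top of Theorem \ref{LLC.QS.Mok}. The only step requiring genuine care is tracking the $\eta_-$-twist through the restriction and isomorphism maps on component groups in $(7)$--$(11)$ and pinning down the sign in the intertwining relation; that is where a slip could occur, and it is the ``hard'' part in a purely clerical sense.
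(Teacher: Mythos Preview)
Your proposal is correct and follows exactly the approach the paper intends: the paper's own ``proof'' is the single sentence ``One can easily check that $\calL$ and $\calJ_{\scrW}$ give us what we want,'' and what you have written is precisely that check, with the key parity observation $\eta_-(z_\phi)=-1$ (equivalently $z_\phi\neq 1$) for $n$ odd made explicit and the bookkeeping for the $\eta_-$-twist through the component-group maps carried out. One small slip: the properties in Theorem~\ref{MainTheorem} are numbered $(1)$--$(10)$, not $(1)$--$(11)$.
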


Hence in the rest of this paper, we will focus on proving Theorem \ref{MainTheorem} for $n$ even.

\section{Theta correspondence}\label{weilrep}
In this section, we recall the notion of the Weil representation and local theta correspondence.

\subsection{Weil representations}\label{AuxiliaryDatum}
Let $V$ be a Hermitian space and $W$ a skew-Hermitian space. To consider the theta correspondence for the reductive dual pair $U(V)\times U(W)$, one requires some additional data:
\begin{itemize}
\item a non-trivial additive character $\psi$ of $F$;
\item a pair of characters $\chi_V$ and $\chi_W$ of $E^\times$ such that 
\begin{equation*}
\chi_V\big|_{F^\times}=\omega_{E/F}^{\dim V}\quad\textit{and}\quad\chi_W\big|_{F^\times}=\omega_{E/F}^{\dim W};
\end{equation*}
\item a trace zero element $\delta\in E^\times$.
\end{itemize}
To elaborate, the tensor product $V\otimes W$ has a natural symplectic form defined by
\begin{equation*}
\langle v_1\otimes w_1, v_2\otimes w_2\rangle=\Tr_{E/F}(\langle v_1,v_2\rangle_V\cdot\langle w_1,w_2\rangle_W).
\end{equation*}
Then there is a natural map
\begin{equation*}
U(V)\times U(W)\lra Sp(V\otimes W).
\end{equation*}
One has the metaplectic $S^1$-cover $Mp(V\otimes W)$ of $Sp(V\otimes W)$, and the character $\psi$ (together with the form $\langle\cdot,\cdot\rangle$ on $V\otimes W$) determines a Weil representation $\omega_\psi$ of $Mp(V\otimes W)$. The datum $\underline{\psi}\coloneqq(\psi,\chi_V,\chi_W,\delta)$ then allows one to specify a splitting of the metaplectic cover over $U(V)\times U(W)$. In \cite{MR1286835}, \cite{MR1327161}, it is showed that this splitting in fact does not depend on the choice of $\delta$. Hence, we have a Weil representation $\omega_{\underline{\psi},V,W}$ of $U(V)\times U(W)$. The Weil representation $\omega_{\underline{\psi},V,W}$ depends only on the orbit of $\psi$ under $\Nm_{E/F}E^\times$.

\subsection{Local theta correspondence}
Given an irreducible representation $\pi$ of $U(W)$, the maximal $\pi$-isotypic quotient of $\omega_{\underline{\psi},V,W}$ is of the form 
\begin{equation*}
\Theta_{\underline{\psi},V,W}(\pi)\boxtimes\pi
\end{equation*}
for some smooth representation $\Theta_{\underline{\psi},V,W}(\pi)$ of $U(V)$ of finite length. By the Howe duality, which was proved by Waldspurger \cite{MR1159105} for $p\neq2$ and by Gan-Takeda \cite{MR3502978}, \cite{MR3454380} for any $p$, we have
\begin{itemize}
\item The maximal semi-simple quotient $\theta_{\underline{\psi},V,W}(\pi)$ of $\Theta_{\underline{\psi},V,W}(\pi)$ is irreducible if $\Theta_{\underline{\psi},V,W}(\pi)$ is non-zero;
\item If $\pi_1$ and $\pi_2$ are irreducible smooth representations of $U(W)$, such that both $\theta_{\underline{\psi},V,W}(\pi_1)$ and $\theta_{\underline{\psi},V,W}(\pi_2)$ are non-zero. Assume that $\pi_1\not\simeq\pi_2$. Then $\theta_{\underline{\psi},V,W}(\pi_1)\not\simeq\theta_{\underline{\psi},V,W}(\pi_2)$.
\end{itemize}

In this paper, we use the theta correspondence for $U(V)\times U(W)$ with 
\begin{equation*}
|\dim V-\dim W|\leq1
\end{equation*}
to construct the LLC for even unitary groups. In our proofs, we shall use some results in the context of the theta correspondence from \cite{MR3166215}. We emphasize that the proofs of those results are independent of the LLC for unitary groups.\\

Here we give a generalization of \cite{MR3166215} Proposition C.4, which will be frequently used in later proofs.
\begin{lemma}\label{theta.ind}
Let $l=\dim W-\dim V$. Assume that $l=-1$. Let $\pi$ be an irreducible tempered representation of $U(W)$ such that
\begin{equation*}
\pi\subset\Ind_Q^{U(W)}(\tau\chi_V\boxtimes\pi_0),
\end{equation*}
where $Q$ is a maximal parabolic subgroup of $U(W)$ with Levi component $GL_k(E)\times U(W_0)$, $\tau$ is an irreducible (unitary) discrete series representation of $GL_k(E)$ and $\pi_0$ is an irreducible tempered representation of $U(W_{0})$. Let
\[
	m_Q(\pi)=\dim\Hom_{U(W)}\left(\pi,\Ind_Q^{U(W)}(\tau\chi_V\boxtimes\pi_0)\right),
\]
and 
\[
	m_P\left(\theta_{\underline{\psi},V,W}(\pi)\right)=\dim\Hom_{U(V)}\left(\theta_{\underline{\psi},V,W}(\pi),\Ind_P^{U(V)}(\tau\chi_W\boxtimes\theta_{\underline{\psi},V_{0},W_{0}}(\pi_0))\right).
\]
Then we have
\[
	m_Q(\pi)\leq m_P\left(\theta_{\underline{\psi},V,W}(\pi)\right).
\]
\end{lemma}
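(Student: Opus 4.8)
The plan is to mimic the proof of Proposition C.4 in \cite{MR3166215}, exploiting the compatibility of theta correspondence with parabolic induction. The starting point is the following mechanism: given the tempered $\pi$ sitting inside $\Ind_Q^{U(W)}(\tau\chi_V\boxtimes\pi_0)$, by Frobenius reciprocity an element of $\Hom_{U(W)}(\pi,\Ind_Q^{U(W)}(\tau\chi_V\boxtimes\pi_0))$ is the same as an element of $\Hom_{M_Q}(R_Q(\pi),\tau\chi_V\boxtimes\pi_0)$, where $R_Q$ denotes the normalized Jacquet functor along $Q$. So the left-hand multiplicity $m_Q(\pi)$ is computed by a Jacquet-module calculation. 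The analogous statement holds for $\theta_{\underline\psi,V,W}(\pi)$ on the $U(V)$ side.

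First I would set up the ``seesaw'' or, more precisely, the Kudla-style filtration of the Jacquet module of the Weil representation $\omega_{\underline\psi,V,W}$ along the parabolic $Q$ (or its image). Restricting $\omega_{\underline\psi,V,W}$ and passing to the Jacquet module $R_Q$, one obtains a short filtration whose successive quotients are built out of Weil representations for smaller dual pairs $U(V_0)\times U(W_0)$ together with a $\GL_k(E)$-factor acting through a twist of $\tau$ by $\chi_W$ (this is exactly the content of the computation behind \cite{MR3166215} Proposition C.4; the hypothesis $l=-1$ is what makes the dual pair ``almost equal rank'' so that only the relevant pieces survive). Applying $\Hom_{U(W)}(\,\cdot\,,\pi)$, resp. the maximal $\pi$-isotypic quotient functor, and using the Howe duality together with the irreducibility/uniqueness statements recalled in Section \ref{weilrep}, I would track how a nonzero element of $\Hom_{U(W)}(\pi,\Ind_Q(\tau\chi_V\boxtimes\pi_0))$ produces a nonzero element of $\Hom_{U(V)}(\theta_{\underline\psi,V,W}(\pi),\Ind_P(\tau\chi_W\boxtimes\theta_{\underline\psi,V_0,W_0}(\pi_0)))$. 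Concretely: an embedding $\pi\hookrightarrow\Ind_Q(\tau\chi_V\boxtimes\pi_0)$ gives, after tensoring with $\omega_{\underline\psi,V,W}$ and taking the appropriate (co)isotypic quotient, a nonzero $U(V)$-map from $\Theta_{\underline\psi,V,W}(\pi)$ into the induced representation $\Ind_P(\tau\chi_W\boxtimes\Theta_{\underline\psi,V_0,W_0}(\pi_0))$; passing to maximal semisimple quotients and invoking that $\theta_{\underline\psi,V_0,W_0}(\pi_0)$ is irreducible (and nonzero, which holds in the almost-equal-rank range for tempered $\pi_0$) upgrades this to a map landing in $\Ind_P(\tau\chi_W\boxtimes\theta_{\underline\psi,V_0,W_0}(\pi_0))$.

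The inequality (rather than equality) comes from the fact that this construction is injective on Hom-spaces but need not be surjective: the filtration of the Jacquet module of $\omega$ may have other pieces, and the functor ``maximal $\pi$-isotypic quotient'' is only right-exact, so distinct $Q$-embeddings of $\pi$ remain distinct after passing to the $U(V)$ side, giving $m_Q(\pi)\le m_P(\theta(\pi))$. The main obstacle I anticipate is the bookkeeping in the Jacquet-module/Kudla-filtration step: one must verify that, under the hypothesis $l=-1$, the only term of the filtration that can contribute a $\tau\chi_V$-isotypic piece on the $\GL_k$-factor is the ``expected'' one, which requires a central-character/cuspidal-support argument (using that $\tau$ is a discrete series of $\GL_k(E)$ and $\pi_0$, $\pi$ are tempered) to kill the other graded pieces — essentially showing no unwanted extensions interfere. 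Once that vanishing is in place, the rest is a formal manipulation of Hom-functors and Frobenius reciprocity, exactly parallel to \cite{MR3166215} Proposition C.4, now carried out with the roles of $V$ and $W$ (Hermitian vs.\ skew-Hermitian) and the twisting characters $\chi_V$, $\chi_W$ kept track of.
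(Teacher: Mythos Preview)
Your proposal is correct and follows essentially the same route as the paper: compute $\Hom_{U(W)}(\omega,\Ind_Q(\tau\chi_V\boxtimes\pi_0))_{sm}$ via Frobenius reciprocity and Kudla's filtration on $R_Q(\omega)$, use that $\tau$ is discrete series to kill all but one graded piece, and obtain an injection of $m_Q(\pi)$ copies of $\Theta(\pi)^\vee$ into the dual of an induced representation on the $U(V)$ side; then pass to $\theta=\Theta$ (temperedness in the almost-equal-rank range) and dualize/MVW to get the embedding of $m_Q(\pi)\cdot\theta(\pi)$ into $\Ind_P(\tau\chi_W\boxtimes\theta(\pi_0))$. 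One small clarification: the hypothesis $l=-1$ is not what makes the extra filtration pieces vanish (that is purely the discrete-series assumption on $\tau$); rather, $l=-1$ is what guarantees $\Theta=\theta$ is irreducible and nonzero for tempered inputs, which you do invoke correctly later in your outline.
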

\begin{proof}
The proof is almost the same as that of \cite{MR3166215} Proposition C.1. The only difference is that we count the multiplicities. Readers may also refer to \cite{MR4308934} Lemma 3.5 and Lemma 8.2.
\end{proof}

\section{Constructions}\label{LLC.Construction}
In this section, we will construct an LLC for even unitary groups. We will first construct such a correspondence for tempered representations, and then extend the construction to non-tempered representations based on the tempered case. Several properties listed in Theorem \ref{MainTheorem} will be proved along the way.\\

Before we start, we set up some notations here. For $\epsilon=\pm1$, let $\calV^\epsilon$ be the Witt tower of Hermitian spaces which consists of all $V_{2n+1}^\epsilon$, where $V_{2n+1}^\epsilon$ is the $(2n+1)$-dimensional Hermitian space over $E$ with sign $\epsilon$. Similarly, let $\calW^\epsilon$ be the Witt tower of skew-Hermitian spaces which consists of all $W_{2n}^\epsilon$, where $W_{2n}^\epsilon$ is the $2n$-dimensional skew-Hermitian space over $E$ with sign $\epsilon$. Let
\[
	\underline{\psi}=(\psi,\chi_V,\chi_W,\delta)
\]
be a tuple of data described in Section \ref{AuxiliaryDatum}. Let $W$ be an even dimensional skew-Hermitian space. For an irreducible smooth representation $\pi$ of $U(W)$, we will use $\theta_{\underline{\psi},2n+1}^\epsilon(\pi)$ to denote the theta lift of $\pi$ to $V_{2n+1}^\epsilon$, with respect to the datum $\underline{\psi}$. Similarly, let $V$ be an odd dimensional Hermitian space. For an irreducible smooth representation $\sigma$ of $U(V)$, we will use $\theta_{\underline{\psi},2n}^\epsilon(\sigma)$ to denote the theta lift of $\sigma$ to $W_{2n}^\epsilon$, with respect to the datum $\underline{\psi}$.

\subsection{Construction of $\calL$}
First of all, we attach $L$-parameters to irreducible tempered representations of even unitary groups. We shall use two steps to achieve this purpose. In the first step, for each tuple of data $\underline{\psi}=(\psi,\chi_V,\chi_W,\delta)$, we construct a map
\begin{equation*}
\calL_{\underline{\psi}}:\Irrt U(W_{2n}^+)\sqcup \Irrt U(W_{2n}^-)\lra\Parat(2n).
\end{equation*}
Then in the second step, we show that indeed $\calL_{\underline{\psi}}$ is independent of the choice of ${\underline{\psi}}$, so we get the desired map $\calL$.\\

In this subsection we do the first step. Fix a tuple of data $\underline{\psi}=(\psi,\chi_V,\chi_W,\delta)$. Given $\pi\in\Irrt U(W_{2n}^\epsilon)$, consider its theta lifts to $U(V_{2n+1}^+)$ and $U(V_{2n-1}^-)$:
\begin{equation*}
\begindc{\commdiag}[75]
\obj(8,4)[b]{$U(V_{2n+1}^+)$}
\obj(0,0)[x]{$U(W_{2n}^\epsilon)$}
\obj(-8,-4)[c]{$U(V_{2n-1}^-)$}
\obj(8,1)[i]{$\theta_{\underline{\psi},2n+1}^+(\pi)$}
\obj(0,-3)[y]{$\pi$}
\obj(-8,-7)[j]{$\theta_{\underline{\psi},2n-1}^-(\pi)$}
\mor{b}{x}{}[\atleft,\solidline]
\mor{c}{x}{}[\atleft,\solidline]
\mor{y}{i}{}[\atleft,\aplicationarrow]
\mor{y}{j}{}[\atleft,\aplicationarrow]
\enddc
\end{equation*}
By the conservation relation (see \cite{MR2885581}, \cite{MR3369906}), we know that exactly one of these two representations is non-zero. Also, by \cite{MR3166215} Proposition C.4, this non-zero representation is also tempered.\\

\underline{CASE I:} If $\sigma\coloneqq\theta_{\underline{\psi},2n+1}^+(\pi)\neq 0$, then we have:
\begin{lemma}
Let $\phi_\sigma$ be the $L$-parameter of $\sigma$. Then we have $\chi_W\subset\phi_\sigma$.
\end{lemma}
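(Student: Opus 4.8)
The plan is to use the fact that $\sigma = \theta^+_{\underline{\psi},2n+1}(\pi)$ is a nonzero tempered representation of the odd unitary group $U(V^+_{2n+1})$, and to extract its $L$-parameter $\phi_\sigma$ via Mok's LLC (Theorem~\ref{LLC.QS.Mok}, together with the extension to all odd unitary groups discussed above). I want to show the character $\chi_W$ occurs as a summand of $\phi_\sigma$. The natural tool is the compatibility of theta lifting with the standard $\gamma$-factor (property~(3) of Mok's LLC, i.e.\ the third ``commandment'' in the doubling method), combined with the behaviour of $\gamma$-factors under theta correspondence as recorded in \cite{MR3166215}.

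Concretely, the first step is to recall from \cite{MR3166215} (the relevant statement in Section~10--11 there, relating doubling $\gamma$-factors on the two members of a dual pair, sometimes phrased via the ``see-saw'' or via explicit computation of the doubling zeta integral for the theta lift) that for any character $\chi$ of $E^\times$, the standard $\gamma$-factor $\gamma(s,\sigma,\chi,\psi)$ of $\sigma$ is related to $\gamma(s,\pi,\chi,\psi)$ of $\pi$ by an explicit ratio of abelian $\gamma$-factors involving $\chi\chi_V^{-1}$, $\chi\chi_W^{-1}$ and $\omega_{E/F}$. In particular this ratio has a pole (or zero) precisely when $\chi = \chi_W$ (or its conjugate-dual partner), reflecting the fact that the theta lift goes ``up'' by one in dimension and the extra piece that appears is governed by $\chi_W$. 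Using property~(3) of Theorem~\ref{LLC.QS.Mok}, $\gamma(s,\sigma,\chi,\psi) = \gamma(s,\phi_\sigma\chi,\psi_E)$, so the location of this pole forces $\chi_W$ (viewed as a one-dimensional representation of $WD_E$, conjugate self-dual of the correct sign since $\chi_W|_{F^\times} = \omega_{E/F}^{\dim V}$ and $\dim V = 2n+1$ is odd) to be a constituent of $\phi_\sigma$. One then checks the sign: $\chi_W$ is conjugate self-dual with sign $(-1)^{2n} = +1$? — here one must be careful, since parameters for $U(V_{2n+1}^+)$ must be conjugate self-dual of sign $(-1)^{2n+1-1} = (+1)$, i.e.\ conjugate-orthogonal; the condition $\chi_W|_{F^\times} = \omega_{E/F}$ is exactly the condition that the character $\chi_W$ of $E^\times = WD_E^{\mathrm{ab}}$ is conjugate-orthogonal, so it is indeed admissible as a summand.

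An alternative, perhaps cleaner route, which I would pursue in parallel: use the explicit description of tempered theta lifts in the almost-equal-rank case. Since $l = \dim V - \dim W = 1$ here, the pair $(U(W),U(V))$ is in the ``almost equal rank'' range where the Prasad conjecture / Gan--Ichino results (\cite{MR3573972}, recalled later in Section~5 of this paper, and also \cite{MR3714507}) predict the $L$-parameter of the lift. In that description, the parameter of $\theta^+_{\underline\psi,2n+1}(\pi)$ is built from the parameter of $\pi$ (suitably twisted by $\chi_V,\chi_W$) by adding or removing the summand $\chi_W$. Hence $\chi_W \subset \phi_\sigma$ is essentially immediate from that recipe. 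However, since this Lemma is being used \emph{during the construction} of the LLC for even unitary groups, one must avoid circularity: the construction cannot presuppose the Prasad conjecture output for the even group. So the $\gamma$-factor argument, which only uses the \emph{already-established} LLC for the \emph{odd} group $U(V^+_{2n+1})$ plus the $\gamma$-factor identities from \cite{MR3166215} (whose proofs, as the authors emphasize, are independent of the LLC), is the safe one, and that is the one I would write up.

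The main obstacle I anticipate is precisely this bookkeeping of which results may legitimately be invoked at this point in the paper, together with the careful sign/conjugate-duality analysis: one must verify that $\chi_W$, as a $1$-dimensional representation of $WD_E$, has the sign $(-1)^{2n}$ needed to be a legitimate summand of $\phi_\sigma \in \Para(2n+1)$, and that the pole of the relevant ratio of abelian $\gamma$-factors at $\chi = \chi_W$ is genuine (not cancelled), which amounts to checking $\chi_W$ does not already coincide with some other constituent forcing a higher-order pole — but multiplicity considerations for $\phi_\sigma$ (it is square-integrable, hence multiplicity-free) handle this. The residual identity $\chi_W|_{F^\times} = \omega_{E/F}^{2n+1} = \omega_{E/F}$ is exactly what makes $\chi_W$ conjugate-orthogonal, matching the sign $(-1)^{(2n+1)-1}=+1$ required for parameters of $U(V_{2n+1}^+)$, so the compatibility works out.
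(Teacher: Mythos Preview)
Your overall strategy---detect $\chi_W\subset\phi_\sigma$ by locating a pole of a standard $\gamma$-factor of $\sigma$ and then invoking property~(3) of the (already established) LLC for the odd group $U(V^+_{2n+1})$---is exactly the paper's approach. Your discussion of non-circularity and of the conjugate-orthogonality of $\chi_W$ is correct and useful.

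Where you diverge from the paper is in \emph{how} you produce the pole. You propose to use the ratio identity (Theorem~11.5 of \cite{MR3166215}) relating $\gamma(s,\sigma,\cdot,\psi)$ to $\gamma(s,\pi,\cdot,\psi)$ and to read off a pole from the abelian factor appearing in that ratio. This can be made to work, but it forces you to control the denominator $\gamma(s,\pi,\cdot,\psi)$ at $s=1$---an object attached to the even group whose LLC you are in the middle of constructing---and your description of which $\chi$ produces the pole is somewhat imprecise (the abelian factor in that ratio is $\gamma(s,\chi\chi_V,\psi_E)$, not something singling out $\chi=\chi_W$ directly). The paper instead reverses the direction via Howe duality: since $\sigma=\theta^+_{\underline\psi,2n+1}(\pi)\neq 0$, one has $\theta^\epsilon_{\underline\psi,2n}(\sigma)=\pi\neq 0$, so the theta lift of $\sigma$ \emph{down} to $U(W^\epsilon_{2n})$ is nonzero. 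Then Proposition~11.2 of \cite{MR3166215} (the nonvanishing $\Leftrightarrow$ pole criterion for the lift going down) gives directly that $\gamma(s,\sigma,\chi_W^{-1},\psi)$ has a pole at $s=1$, with no reference to any invariant of $\pi$. Combined with property~(3) for $U(V^+_{2n+1})$ and temperedness of $\phi_\sigma$, this forces $\chi_W\subset\phi_\sigma$.

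Finally, your closing worry about multiplicities is unnecessary: you only need $\chi_W$ to occur at least once in $\phi_\sigma$, so neither square-integrability nor multiplicity-freeness of $\phi_\sigma$ plays any role here (and indeed $\phi_\sigma$ is only known to be tempered at this point).
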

\begin{proof}
By the Howe duality, $\theta_{\underline{\psi},2n}^\epsilon(\sigma)=\pi$ is non-zero. Hence by \cite{MR3166215} Proposition 11.2, $\gamma(s,\sigma,\chi_W^{-1},\psi)$ has a pole at $s=1$. Applying the LLC for odd unitary groups, we get
\begin{equation*}
\gamma(s,\sigma,\chi_W^{-1},\psi)=\gamma(s,\phi_\sigma\chi_W^{-1},\psi_E).
\end{equation*}
Since $\sigma$ is tempered, $\phi_\sigma$ is also tempered. This implies that $\phi_\sigma\chi_W^{-1}$ contains a trivial representation. Hence we conclude $\chi_W\subset\phi_\sigma$ as desired.
\end{proof}
In this case, we define $\calL_{\underline{\psi}}(\pi)$ to be
\begin{equation*}
\phi\coloneqq(\phi_\sigma -\chi_W)\chi_W^{-1}\chi_V.
\end{equation*}

\underline{CASE II:} If $\sigma\coloneqq\theta_{\underline{\psi},2n-1}^-(\pi)\neq0$. Let $\phi_\sigma$ be the $L$-parameter of $\sigma$. In this case, we simply define $\calL_{\underline{\psi}}(\pi)$ to be 
\begin{equation*}
\phi\coloneqq \phi_\sigma\chi_W^{-1}\chi_V\oplus\chi_V.
\end{equation*}

Notice that in either case, $\phi$ is a tempered parameter. Thus we get a map
\begin{equation*}
\calL_{\underline{\psi}}:\Irrt U(W_{2n}^+)\sqcup \Irrt U(W_{2n}^-)\lra\Parat(2n).
\end{equation*}

\begin{lemma}\label{WeakPrasadConj.QS}
Let $\pi$ be an irreducible tempered representation of $U(W_{2n}^+)$. Let $\phi$ be the $L$-parameter of $\pi$ in the sense of Mok's LLC for quasi-split unitary groups, i.e. $\phi=\calL^+(\pi)$. Then we have
\[
	\calL_{\underline{\psi}}(\pi)=\phi.
\]
\end{lemma}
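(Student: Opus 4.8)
The plan is to identify the $L$-parameter $\phi_\sigma$ of the theta lift $\sigma$ of $\pi$ explicitly and then read off $\calL_{\underline{\psi}}(\pi)$ from the defining formula of the construction; concretely this amounts to the ``weak'' Prasad conjecture (the statement about parameters, not about characters of $\calS_\phi$) for the quasi-split group $U(W_{2n}^+)$. I would argue following the two cases of the construction, and in each case the mechanism is a two-fold computation of the standard $\gamma$-factor of $\sigma$: on one side the already available LLC for odd unitary groups (Mok's correspondence together with the trivial identification of Theorem \ref{TheoremOdd}) expresses it through $\phi_\sigma$; on the other side the compatibility of the doubling $\gamma$-factor with the theta correspondence --- the same circle of ideas from \cite{MR3166215} that already yields the lemma in Case I above --- expresses it, up to an explicit abelian factor, through $\pi$, hence through $\phi$ by Mok's LLC for $U(W_{2n}^+)$ (Theorem \ref{LLC.QS.Mok}).

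In Case I, $\sigma=\theta_{\underline{\psi},2n+1}^+(\pi)$ is a tempered representation of the quasi-split odd unitary group $U(V_{2n+1}^+)$ with $\chi_W\subset\phi_\sigma$, and the assertion $\calL_{\underline{\psi}}(\pi)=\phi$ unwinds to $\phi_\sigma=(\phi\,\chi_W^{-1}\chi_V)\oplus\chi_W$. For an arbitrary character $\chi$ of $E^\times$, the LLC for odd unitary groups gives $\gamma(s,\sigma,\chi,\psi)=\gamma(s,\phi_\sigma\chi,\psi_E)$, while the theta/doubling compatibility of \cite{MR3166215} rewrites $\gamma(s,\sigma,\chi,\psi)$ as $\gamma(s,\pi,\chi\,\chi_V\chi_W^{-1},\psi)$ times an explicit abelian $\gamma$-factor accounting for the summand $\chi_W$ and the change of dimension; by Mok's LLC for $U(W_{2n}^+)$ the first factor equals $\gamma(s,\phi\cdot\chi\,\chi_V\chi_W^{-1},\psi_E)$. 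Comparing the two expressions for every $\chi$, the abelian factors cancel and one obtains $\gamma(s,\phi_\sigma\chi,\psi_E)=\gamma\big(s,((\phi\,\chi_W^{-1}\chi_V)\oplus\chi_W)\chi,\psi_E\big)$; since a tempered parameter is determined by its twisted standard $\gamma$-factors this forces $\phi_\sigma=(\phi\,\chi_W^{-1}\chi_V)\oplus\chi_W$, and plugging into the construction gives $\calL_{\underline{\psi}}(\pi)=(\phi_\sigma-\chi_W)\chi_W^{-1}\chi_V=\phi$. Case II is parallel: $\sigma=\theta_{\underline{\psi},2n-1}^-(\pi)$ is again handled by Mok's LLC via Theorem \ref{TheoremOdd}; here one first notes, exactly as in the Case I lemma but applied on the $U(W)$-side via \cite{MR3166215} Proposition 11.2, that the non-vanishing forces $\gamma(s,\pi,\chi_V^{-1},\psi)$ to have a pole at $s=1$, hence (by Mok's LLC and temperedness of $\phi$) that $\chi_V\subset\phi$; the same two-fold $\gamma$-factor computation then yields $\phi_\sigma=(\phi-\chi_V)\chi_V^{-1}\chi_W$, so that $\calL_{\underline{\psi}}(\pi)=\phi_\sigma\,\chi_W^{-1}\chi_V\oplus\chi_V=(\phi-\chi_V)\oplus\chi_V=\phi$.

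The part that needs care is the precise form of the $\gamma$-factor relation between $\sigma=\theta(\pi)$ and $\pi$: unlike in the Case I lemma one needs not just the location of a pole but the exact abelian correction factor, i.e.\ the full compatibility of the doubling zeta integral with theta lifting in the almost equal rank range, together with the fact that a tempered parameter of a unitary group is pinned down by its twisted standard $\gamma$-factors. One should also check that there is no circularity: the theta lift of any representation of $U(W_{2n}^+)$ lies on an odd unitary group, whose LLC is Mok's after the identification of Theorem \ref{TheoremOdd}, so only the already established quasi-split correspondence is used as input. Alternatively --- and this is essentially the same argument packaged differently --- one may simply quote Gan--Ichino's Prasad conjecture \cite{MR3573972} specialized to the quasi-split group to obtain $\phi_\sigma$, in which case the remaining work is the bookkeeping of matching the normalization of $\chi_V,\chi_W$ and the first-occurrence dichotomy with the two cases of the construction.
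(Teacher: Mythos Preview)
Your overall strategy is the same as the paper's: the paper's proof is nothing more than a pointer to \cite{MR3166215}, p.~652, which is exactly the ``compute the local factors of $\sigma$ two ways and read off $\phi_\sigma$'' argument you describe. So the architecture is right, and your care about non-circularity (only Mok's quasi-split LLC and the odd case of Theorem~\ref{TheoremOdd} are used) is well placed.

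There is, however, a genuine gap in the step where you conclude $\phi_\sigma=(\phi\chi_V^{-1}\chi_W)\oplus\chi_W$ from the equality of \emph{standard} $\gamma$-factors twisted by all characters $\chi$. The claim ``a tempered parameter is determined by its twisted standard $\gamma$-factors'' is not true in general: the $GL_1$-converse theorem fails for $GL_n$ once $n\ge 3$. For instance, two inequivalent irreducible $n$-dimensional representations $\rho_1,\rho_2$ of $W_E$ with the same determinant, the same conductor, and the same epsilon factors under all character twists (such examples exist via stability of epsilon factors) have $L(s,\rho_i\chi)=1$ for every $\chi$ and hence identical $\gamma(s,\rho_i\chi,\psi_E)$, yet $\rho_1\not\simeq\rho_2$. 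Both are perfectly good tempered (even discrete) parameters for an odd unitary group of the right size, so your inference would fail on them.

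The fix is exactly what the paper (and \cite{MR3166215}) actually uses: replace the standard $\gamma$-factors by Plancherel measures, i.e.\ twists by all square-integrable $\tau$ on $GL_k(E)$ for every $k$. The compatibility of the doubling/theta machinery with Plancherel measures is proved in \cite{MR3166215} (and is recorded in this paper as Lemma~\ref{PreserveLocalFactor.1st}(2)), and the statement that Plancherel measures determine a tempered parameter is \cite{MR3573972} Lemma~A.6 (precisely the tool invoked in Corollary~\ref{ParameterIndependent}). With that substitution your argument goes through verbatim in both cases. Your ``alternatively'' sentence is also fine: quoting Gan--Ichino's determination of $\phi_\sigma$ directly is just a repackaging of the same Plancherel-measure argument.
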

\begin{proof}
See \cite{MR3166215} Page 652. (If we only consider the case $\pi\in\Irrt U(W_{2n}^+)$, the proof in \cite{MR3166215} Page 652 will only involve Mok's LLC for quasi-split unitary groups, without refering any non quasi-split unitary groups.)
\end{proof}

\subsection{Independency}
In the previous subsection we have constructed the map $\calL_{\underline{\psi}}$. Now we do the second step.
\begin{lemma}\label{PreserveLocalFactor.1st}
Let $\pi\in\Irrt U(W_{2n}^\epsilon)$ and $\phi=\calL_{\underline{\psi}}(\pi)$.
\begin{enumerate}
\item For any character $\chi$ of $E^\times$, we have
\begin{equation*}
\gamma(s,\pi,\chi,\psi)=\gamma(s,\phi\chi,\psi_E).
\end{equation*}
\item For any irreducible square-integrable representation $\tau$ of $GL_k(E)$ with $L$-parameter $\phi_\tau$, we have
\begin{align*}
\mu_\psi(\tau_s&\boxtimes\pi)=\gamma(s,\phi_\tau\otimes\phi^\vee,\psi_E)\cdot\gamma(-s,\phi_\tau^\vee\otimes\phi,\psi_E^{-1})\\
&\times\gamma(2s,As^{(-1)^n}\circ\phi_\tau,\psi)\cdot\gamma(-2s,As^{(-1)^n}\circ\phi_\tau^\vee,\psi^{-1}).
\end{align*}
\end{enumerate}
\end{lemma}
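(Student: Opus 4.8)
The plan is to reduce both identities to properties of the theta correspondence already recorded in \cite{MR3166215}, namely the compatibility of theta lifts with standard $\gamma$-factors (a ``see-saw'' / doubling argument, \cite{MR3166215} Theorem 11.5 and its corollaries) and with Plancherel measures (\cite{MR3166215} Appendix C, e.g. Proposition C.4 and the surrounding computations), combined with the LLC for odd unitary groups, which we already have by Theorem \ref{TheoremOdd}. In both cases one has $\pi \in \Irrt U(W_{2n}^\epsilon)$ with theta lift $\sigma$ to the appropriate odd Hermitian space ($U(V_{2n+1}^+)$ in Case I, $U(V_{2n-1}^-)$ in Case II), and $\calL_{\underline\psi}(\pi) = \phi$ is defined in terms of $\phi_\sigma = \calL(\sigma)$ by the explicit recipe in the construction. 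So the strategy is: (i) express $\gamma(s,\pi,\chi,\psi)$, resp.\ $\mu_\psi(\tau_s\boxtimes\pi)$, in terms of the analogous invariant of $\sigma$ via the theta correspondence; (ii) apply the LLC for $U(V^\pm_{2n\pm1})$ (parts (3) and (4) of Theorem \ref{MainTheorem}, available for odd groups) to rewrite the $\sigma$-side Galois-theoretically in terms of $\phi_\sigma$; (iii) do the bookkeeping with the twists $\chi_V,\chi_W$ and the extra summand $\chi_V$ (Case II) to match the asserted formula for $\phi$.

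For part (1): in Case I, by \cite{MR3166215} Proposition 11.3 (or the relevant ``conservation of $\gamma$-factor under theta'' statement there) one has a relation of the shape
\begin{equation*}
\gamma(s,\pi,\chi,\psi) = \gamma(s,\sigma,\chi\chi_V\chi_W^{-1},\psi)\cdot\gamma(s,\chi\chi_V^{-1},\psi_E)^{\pm 1}\cdot(\text{elementary }\varepsilon/L\text{-factors}),
\end{equation*}
where the precise shape is dictated by the dual pair $U(V_{2n+1}^+)\times U(W_{2n}^\epsilon)$; applying part (3) of the LLC for the odd group $U(V_{2n+1}^+)$ gives $\gamma(s,\sigma,\chi\chi_V\chi_W^{-1},\psi) = \gamma(s,\phi_\sigma\chi\chi_V\chi_W^{-1},\psi_E)$, and since $\phi = (\phi_\sigma - \chi_W)\chi_W^{-1}\chi_V$, i.e.\ $\phi\chi_W\chi_V^{-1} = \phi_\sigma - \chi_W$, one gets $\phi_\sigma\chi_W^{-1}\chi_V = \phi \oplus \chi_V$, so $\phi_\sigma\chi\chi_V\chi_W^{-1} = \phi\chi \oplus \chi\chi_V$; the ``extra'' factor $\gamma(s,\chi\chi_V,\psi_E)$ coming from the $\chi_V$-summand should cancel the elementary factors produced by the theta relation, leaving exactly $\gamma(s,\phi\chi,\psi_E)$. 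Case II is handled identically with the dual pair $U(V_{2n-1}^-)\times U(W_{2n}^\epsilon)$ and the definition $\phi = \phi_\sigma\chi_W^{-1}\chi_V\oplus\chi_V$, which makes the matching even more direct.

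For part (2): the Plancherel measure $\mu_\psi(\tau_s\boxtimes\pi)$ for $U(\wt W_{2n})$ and the Plancherel measure $\mu_\psi((\tau\chi_?)_s\boxtimes\sigma)$ for the corresponding odd group are equal up to the Plancherel measure of a representation of $GL_k$ induced up inside a $GL$, by the see-saw computation of \cite{MR3166215} Appendix C (this is essentially the content of Proposition C.4, now used as an equality of meromorphic functions rather than just a nonvanishing statement); invoking part (4) of the LLC for the odd group rewrites $\mu_\psi((\tau\chi_?)_s\boxtimes\sigma)$ in terms of $\phi_\sigma$ and $\phi_{\tau\chi_?}$, and then a purely formal manipulation with Asai $\gamma$-factors — using $As^{(-1)^{n+1}}$ versus $As^{(-1)^n}$ for the odd group, the identity $\phi_{\tau\chi} = \phi_\tau\cdot\chi$, and the dictionary between $\phi$ and $\phi_\sigma$ above — converts it into the claimed expression in $\phi_\tau$ and $\phi$. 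The main obstacle I anticipate is precisely this last bookkeeping step: keeping straight the signs in the Asai factors (which shift by one when passing between the even and odd members of the pair), the placement of $\chi_V$ versus $\chi_W$ on $\tau$, and the contribution of the distinguished summand $\chi_V\subset\phi$ in Case II — each of these introduces elementary $\gamma$-factors that must be shown to cancel. None of it is deep, but it is the kind of computation where a misplaced inverse or a wrong parity silently breaks the identity, so I would organize it by first treating Case II (where $\phi$ and $\phi_\sigma$ differ by the cleanest twist-plus-summand) and then deducing Case I by the same template with the roles of $\chi_V,\chi_W$ adjusted.
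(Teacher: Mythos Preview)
Your proposal is correct and follows essentially the same route as the paper: relate $\gamma(s,\pi,\chi,\psi)$ to $\gamma(s,\sigma,\chi\chi_W^{-1}\chi_V,\psi)$ via \cite{MR3166215} Theorem~11.5, apply the LLC for odd unitary groups to $\sigma$, and unwind the definition $\phi_\sigma=\phi\chi_V^{-1}\chi_W\oplus\chi_W$ (the paper writes out only part~(1), Case~I, and declares the rest ``similar''). Your hedging about the shape of the extra elementary factor is unnecessary --- Theorem~11.5 gives precisely $\gamma(s,\sigma,\chi\chi_W^{-1}\chi_V,\psi)/\gamma(s,\pi,\chi,\psi)=\gamma(s,\chi\chi_V,\psi_E)$, which cancels on the nose against the $\chi_W$-summand of $\phi_\sigma$; and for part~(2) the relevant input is the Plancherel comparison in \cite{MR3166215} Appendix~B/C rather than Proposition~C.4 itself (which is the temperedness statement).
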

\begin{proof}
We only prove the first statement here. The proof of the second is similar. According to our construction, we need to consider two cases.\\

\underline{CASE I:} Suppose that $\sigma\coloneqq\theta_{\underline{\psi},2n+1}^+(\pi)\neq 0$. Then for any character $\chi$ of $E^\times$, by \cite{MR3166215} Theorem 11.5, we have
\[
	\frac{\gamma(s,\sigma,\chi\chi_W^{-1}\chi_V,\psi)}{\gamma(s,\pi,\chi,\psi)}=\gamma\left(s,\chi\chi_V,\psi_E\right).
\]
Let $\phi_\sigma$ be the $L$-parameter of $\sigma$. It follows from our construction that
\[
	\phi_\sigma=\phi\chi_V^{-1}\chi_W\oplus\chi_W.
\]
By the LLC for odd unitary groups, we have
\begin{align*}
	\gamma(s,\sigma,\chi\chi_W^{-1}\chi_V,\psi)&=\gamma(s,\phi_\sigma\chi\chi_W^{-1}\chi_V,\psi_E)\\
	&=\gamma(s,\phi\chi,\psi_E)\cdot\gamma\left(s,\chi\chi_V,\psi_E\right).
\end{align*}
Combining these equalities, we get
\[
	\gamma(s,\pi,\chi,\psi)=\gamma(s,\phi\chi,\psi_E).
\]
Hence the first statement holds in this case.\\

\underline{CASE II:} Suppose that $\sigma\coloneqq\theta_{\underline{\psi},2n-1}^-(\pi)\neq0$. In this case the desired formula also follows from a similar computation. We omit the details here.
\end{proof}

\begin{corollary}\label{ParameterIndependent}
The map $\calL_{\underline{\psi}}$ is independent of the choice of $\underline{\psi}$.
\end{corollary}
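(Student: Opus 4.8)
The plan is to read off $\calL_{\underline\psi}(\pi)$ from the intrinsic local factors of $\pi$ furnished by Lemma \ref{PreserveLocalFactor.1st}. First a reduction: by Lemma \ref{WeakPrasadConj.QS} we have $\calL_{\underline\psi}(\pi)=\calL^+(\pi)$ for $\pi\in\Irrt U(W_{2n}^+)$, which is manifestly independent of $\underline\psi$, so it suffices to treat $\pi\in\Irrt U(W_{2n}^-)$. Write $\underline\psi=(\psi,\chi_V,\chi_W,\delta)$. In the construction, the parameter $\phi_\sigma$ of the theta lift $\sigma$ is computed by the local Langlands correspondence for \emph{odd} unitary groups, which is canonical (the Whittaker datum is canonical in odd dimension), so all the dependence of $\calL_{\underline\psi}$ on $\underline\psi$ flows through the theta lifts, hence through the Weil representation $\omega_{\underline\psi,V,W}$. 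By Section \ref{AuxiliaryDatum}, this representation is independent of $\delta$ and depends on $\psi$ only through its $\Nm_{E/F}(E^\times)$-orbit; so $\calL_{\underline\psi}$ is already independent of $\delta$, and the dependence on $\psi$ is reduced to comparing the two $\Nm_{E/F}(E^\times)$-orbits of additive characters.

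The substantive point is independence of $(\chi_V,\chi_W)$. By Lemma \ref{PreserveLocalFactor.1st}, the tempered parameter $\phi=\calL_{\underline\psi}(\pi)$ is constrained by $\gamma(s,\phi\chi,\psi_E)=\gamma(s,\pi,\chi,\psi)$ for every character $\chi$ of $E^\times$, and by $\mu_\psi(\tau_s\boxtimes\pi)=\gamma(s,\phi_\tau\otimes\phi^\vee,\psi_E)\,\gamma(-s,\phi_\tau^\vee\otimes\phi,\psi_E^{-1})\times(\text{Asai }\gamma\text{-factors not involving }\phi)$ for every irreducible discrete series $\tau$ of every $GL_k(E)$. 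The right-hand sides depend only on $\pi$ and $\psi$, not on $\chi_V$ or $\chi_W$. Hence it is enough to know that a tempered $L$-parameter of dimension $2n$ is uniquely determined by this package of local factors. This is a form of the local converse theorem: the poles and zeros of the twisted standard $\gamma$-factors recover the self-dual character constituents of $\phi$ outright, while the Rankin--Selberg $\gamma$-factors against discrete series parameters of $GL_1,\dots,GL_{2n-1}$ are exactly the twisting data needed to apply the local converse theorem for $GL_{2n}(E)$ --- after passing from the symmetrized combination $\gamma(s,\rho,\psi_E)\gamma(-s,\rho^\vee,\psi_E^{-1})$ produced by the Plancherel measure to $\gamma(s,\rho,\psi_E)$ itself, which is possible up to a constant that is then fixed by the standard $\gamma$-factors already determined. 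Granting this, $\calL_{\underline\psi}(\pi)$ does not depend on $(\chi_V,\chi_W)$.

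It remains to compare $\underline\psi=(\psi,\chi_V,\chi_W,\delta)$ with $\underline\psi'=(\psi(a\,\cdot\,),\chi_V,\chi_W,\delta)$ for $a\in F^\times$. Both the doubling $\gamma$-factor $\gamma(s,\pi,\chi,\psi)$ and the Galois $\gamma$-factor $\gamma(s,\phi\chi,\psi_E)$ obey standard transformation laws under $\psi\mapsto\psi(a\,\cdot\,)$ (respectively $\psi_E\mapsto\psi_E(a\,\cdot\,)$), and the two laws agree on the arguments at hand because they already agree for quasi-split groups, where Mok's local Langlands correspondence (Theorem \ref{LLC.QS.Mok}) respects standard $\gamma$-factors for \emph{every} additive character; the matching is governed by the relation between the central character of $\pi$ and $\det$ of its parameter, which persists in the non quasi-split case. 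Combining this with the characterization of the previous paragraph identifies $\calL_{\underline\psi}(\pi)$ with $\calL_{\underline\psi'}(\pi)$, and the proof is complete.

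I expect the main obstacle to lie in the converse-theorem step: twists by characters of $E^\times$ alone do \emph{not} in general pin down a tempered parameter (they cannot separate higher-dimensional supercuspidal constituents, mirroring the need for twists up to $GL_{n-1}$ in the $GL_n$ converse theorem), so the Plancherel-measure half of Lemma \ref{PreserveLocalFactor.1st} is genuinely indispensable, and extracting a clean converse-theorem statement from the symmetrized $\gamma$-factors it supplies requires some care with functional equations.
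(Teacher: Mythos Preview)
Your approach is correct in outline but takes a more laborious route than the paper. You split the problem into independence of $(\chi_V,\chi_W)$ and then a change of $\psi$, attacking the first by arguing directly that a tempered parameter is determined by its standard $\gamma$-factors together with Plancherel measures. The paper handles everything in one stroke via a trick you do not use: given $\pi$ and two data $\underline\psi,\underline\psi'$, pick a companion $\pi'\in\Irrt U(W_{2n}^+)$ with $\calL_{\underline\psi}(\pi')=\calL_{\underline\psi}(\pi)$; then $\calL_{\underline\psi'}(\pi')=\calL^+(\pi')=\calL_{\underline\psi}(\pi)$ by Lemma~\ref{WeakPrasadConj.QS} applied to \emph{both} data, and Lemma~\ref{PreserveLocalFactor.1st} gives $\mu_{\psi'}(\tau_s\boxtimes\pi)=\mu_{\psi'}(\tau_s\boxtimes\pi')$ for every discrete series $\tau$ (the change-of-$\psi$ constant $C_{\psi',\psi,2n,k}$ is the same on both sides and cancels). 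The paper then invokes \cite{MR3573972}~Lemma~A.6 --- precisely the converse statement you sketch, that equality of all such Plancherel measures forces equality of tempered parameters --- to conclude $\calL_{\underline\psi'}(\pi)=\calL_{\underline\psi'}(\pi')$. Compared to yours, the companion argument avoids your separate treatment of the $\psi$-change (your transformation-law step there leans on the central-character/$\det\phi$ matching ``persisting'' to the non quasi-split case, which is not yet established at this point in the argument), and it replaces your converse-theorem sketch by a single citable lemma. The technical heart --- Plancherel measures determine tempered parameters --- is the same; the paper just packages the reduction more tightly by anchoring everything to the quasi-split side.
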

\begin{proof}
Assume that $\underline{\psi'}=(\psi', \chi_V',\chi_W',\delta)$ is another tuple of data. We define the map
\begin{equation*}
\calL_{\underline{\psi'}}:\Irrt U(W_{2n}^+)\sqcup \Irrt U(W_{2n}^-)\lra\Parat(2n)
\end{equation*}
in a similar procedure. By Lemma \ref{WeakPrasadConj.QS}, the restrictions of both $\calL_{\underline{\psi}}$ and $\calL_{\underline{\psi'}}$ to $\Irrt U(W_{2n}^+)$ coincide with $\calL^+$, i.e.
\begin{equation*}
\calL_{\underline{\psi}}\Big|_{\Irrt U(W_{2n}^+)}=\calL^+=\calL_{\underline{\psi'}}\Big|_{\Irrt U(W_{2n}^+)}.
\end{equation*}
Now given any irreducible tempered representation $\pi$ of $U(W_{2n}^\epsilon)$, we can find a representation $\pi'\in\Irrt U(W_{2n}^+)$, such that 
\begin{equation*}
\calL^+(\pi')=\calL_{\underline{\psi}}(\pi')=\calL_{\underline{\psi}}(\pi).
\end{equation*}
Hence by Lemma \ref{PreserveLocalFactor.1st}, for all $k\geq1$, and all irreducible square-integrable representation $\tau$ of $GL_k(E)$, we have
\begin{equation*}
\mu_{\psi'}(\tau_s\boxtimes\pi')=C_{\psi',\psi,2n,k}\cdot\mu_\psi(\tau_s\boxtimes\pi')=C_{\psi',\psi,2n,k}\cdot\mu_\psi(\tau_s\boxtimes\pi)=\mu_{\psi'}(\tau_s\boxtimes\pi).
\end{equation*}
where $C_{\psi',\psi,2n,k}$ is a constant only depends on $\psi'$, $\psi$, $2n$ and $k$. This equality, together with Lemma \ref{PreserveLocalFactor.1st} and \cite{MR3573972} Lemma A.6, implies that
\begin{equation*}
\calL^+(\pi')=\calL_{\underline{\psi'}}(\pi')=\calL_{\underline{\psi'}}(\pi).
\end{equation*}
Hence $\calL_{\underline{\psi}}(\pi)=\calL_{\underline{\psi'}}(\pi)$. In other words, $\calL_{\underline{\psi}}$ is independent of the choice of $\underline{\psi}$.
\end{proof}

After proving that the map $\calL_{\underline{\psi}}$ is indeed independent of the choice of $\underline{\psi}$, we will denote the map abstractly by $\calL$. For an irreducible tempered representation $\pi$ of $U(W_{2n}^\epsilon)$, we call $\calL(\pi)$ the $L$-parameter of $\pi$. For a tempered $L$-parameter $\phi$, we let $\Pi_\phi$ be the fiber $\calL^{-1}(\phi)$, and call it the $L$-packet of $\phi$. For $\epsilon=\pm1$, we also let $\Pi_\phi^\epsilon=\Pi_\phi\cap\Irr U(W_{2n}^\epsilon)$. Combining Lemma \ref{PreserveLocalFactor.1st} and Corollary \ref{ParameterIndependent}, we get
\begin{corollary}\label{PreserveLocalFactor.2nd}
For tempered representations, the map $\calL$ respects standard $\gamma$-factors and Plancherel measures.
\end{corollary}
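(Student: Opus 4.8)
The plan is to read the corollary off from Lemma~\ref{PreserveLocalFactor.1st} and Corollary~\ref{ParameterIndependent} essentially formally; the only substantive point is to match up the auxiliary datum $\underline{\psi}$ appearing in the lemma with the abstract map $\calL$ in the statement.

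First I would fix an irreducible tempered representation $\pi$ of $U(W_{2n}^\epsilon)$ and put $\phi=\calL(\pi)$. Given any non-trivial additive character $\psi$ of $F$, I would choose characters $\chi_V,\chi_W$ of $E^\times$ with $\chi_V|_{F^\times}=\omega_{E/F}$ and $\chi_W|_{F^\times}=1$ together with a trace-zero element $\delta\in E^\times$, forming an admissible datum $\underline{\psi}=(\psi,\chi_V,\chi_W,\delta)$ as in Section~\ref{AuxiliaryDatum} (such a datum exists, since any character of $F^\times$ extends to $E^\times$). By Corollary~\ref{ParameterIndependent}, $\calL_{\underline{\psi}}(\pi)=\calL(\pi)=\phi$, so the pair $(\pi,\phi)$ satisfies the hypotheses of Lemma~\ref{PreserveLocalFactor.1st} relative to this $\underline{\psi}$.

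Then I would simply invoke the two parts of Lemma~\ref{PreserveLocalFactor.1st}: part~(1) gives $\gamma(s,\pi,\chi,\psi)=\gamma(s,\phi\chi,\psi_E)$ for every character $\chi$ of $E^\times$, and part~(2) gives the Plancherel-measure identity of property~(4) of Theorem~\ref{MainTheorem} for every irreducible square-integrable representation $\tau$ of $GL_k(E)$. Since $\psi$ was arbitrary, both identities hold for all $\psi$, which is exactly the assertion that $\calL$ respects standard $\gamma$-factors and Plancherel measures on tempered representations; the ``in particular'' part (that Plancherel measures are invariants of an $L$-packet) is then automatic, as the right-hand side depends only on $\phi$.

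I do not expect any real obstacle here: the corollary is a bookkeeping consequence of the two preceding results. The single point that needs a word of care is purely notational — Lemma~\ref{PreserveLocalFactor.1st} is phrased for the provisional map $\calL_{\underline{\psi}}$ with a built-in choice of $\psi$, whereas the corollary concerns the intrinsic map $\calL$ and (implicitly) all $\psi$; Corollary~\ref{ParameterIndependent} is precisely what licenses replacing $\calL_{\underline{\psi}}$ by $\calL$ and taking the first entry of $\underline{\psi}$ to be whatever additive character one wishes.
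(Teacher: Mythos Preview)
Your proposal is correct and matches the paper's approach exactly: the paper simply states that the corollary follows by combining Lemma~\ref{PreserveLocalFactor.1st} and Corollary~\ref{ParameterIndependent}, and you have spelled out the bookkeeping (choosing, for each $\psi$, an admissible datum $\underline{\psi}$ and invoking independence) that makes this combination work.
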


\subsection{Counting Sizes of Packets}\label{LLC.Construction.J}
Our next goal is to attach a character of component group to each irreducible tempered representation of even unitary groups. To do this, we need some preparations. In this subsection we consider the behaviour of $L$-parameters under the local theta correspondence and count the sizes of $L$-packets for even unitary groups. In this subsection when we talk about representations of odd unitary groups, the $L$-parameter of a representation is in the sense of Theorem \ref{TheoremOdd}; whereas when we talk about representations of even unitary groups, the $L$-parameter of a tempered representation is in the sense of $\calL$.\\

To define the map $\calJ_\scrW$, we need to fix an Whittaker datum $\scrW$ of $U(W_{2n}^+)$. As explained in Section \ref{whit.data}, once we fix the Whittaker datum $\scrW$, we may pick up a non-trivial additive character $\psi$ of $F$, such that 
\[
	\scrW=\scrW_\psi.
\]
We fix a pair of characters $(\chi_V,\chi_W)$ of $E^\times$ and a trace zero element $\delta\in E^\times$ as in Section \ref{AuxiliaryDatum}. If there is no further explanation, the theta lifts used in the rest of this section will be with respect to the datum
\[
	\underline{\psi}=(\psi,\chi_V,\chi_W,\delta).
\]
We shall use $\calL_{\underline{\psi}}$ to `realize' the map $\calL$. For simplicity, we shall drop the subscript ``$\underline{\psi}$'' and just denote $\theta_{\underline{\psi},*}^\pm$ by $\theta_{*}^\pm$. In the rest of this section, if $\rho$ is an irreducible smooth representation of some group $G$, we shall use the symbol $\phi_\rho$ to denote the $L$-parameter of $\rho$. If $G$ is an odd unitary group, we shall also use the symbol $\eta_\rho$ to denote the character of $\calS_{\phi_\rho}$ associated to $\rho$.
\begin{lemma}\label{Compatible.Parameter}
\begin{enumerate}
\item If $\pi\in\Irrt U(W_{2n}^\epsilon)$, such that ${\sigma}\coloneqq\theta_{2n+1}^{\epsilon'} (\pi)\neq 0$. Then 
\begin{equation*}
\phi_{\sigma}=\phi_\pi\chi_V^{-1}\chi_W \oplus\chi_W.
\end{equation*}
\item Similarly, if $\sigma\in\Irrt U(V_{2m-1}^\epsilon)$, such that ${\pi}\coloneqq\theta_{2m}^{\epsilon'} (\sigma)\neq 0$. Then 
\begin{equation*}
\phi_{{\pi}}=\phi_\sigma\chi_W^{-1}\chi_V \oplus\chi_V.
\end{equation*}
\end{enumerate}
\end{lemma}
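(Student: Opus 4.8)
The plan is to prove Lemma \ref{Compatible.Parameter} by reducing statement (1) to the very constructions that define $\calL_{\underline{\psi}}$, and then deduce (2) by running the same argument on the mirror side of the dual pair. First I would recall that, by the conservation relation, for $\pi\in\Irrt U(W_{2n}^\epsilon)$ exactly one of $\theta_{2n+1}^+(\pi)$ and $\theta_{2n-1}^-(\pi)$ is nonzero; this is precisely the dichotomy ``CASE I / CASE II'' that was used to define $\calL_{\underline{\psi}}$. In the generic ``going-up'' situation (the analogue of CASE I), where $\sigma=\theta_{2n+1}^{\epsilon'}(\pi)\neq 0$ with $\epsilon'=+$, the definition reads $\phi=\calL_{\underline\psi}(\pi)=(\phi_\sigma-\chi_W)\chi_W^{-1}\chi_V$, where we already know $\chi_W\subset\phi_\sigma$ from the lemma preceding the construction of CASE I. Solving this relation for $\phi_\sigma$ gives exactly $\phi_\sigma=\phi_\pi\chi_V^{-1}\chi_W\oplus\chi_W$, which is the claimed formula. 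The content of the lemma is therefore to also cover the ``going-down'' situation, i.e.\ when $\epsilon'=-$ and $\sigma=\theta_{2n-1}^-(\pi)\neq 0$: here $\pi$ is the theta lift of $\sigma$ \emph{up} the Witt tower, so we are back in a CASE-I-type configuration with the roles of $\sigma$ and $\pi$ interchanged, and the defining formula $\phi_\pi=\phi_\sigma\chi_W^{-1}\chi_V\oplus\chi_V$ of CASE II rearranges directly to $\phi_\sigma=\phi_\pi\chi_V^{-1}\chi_W\oplus\chi_W$ — wait, more carefully: in CASE II one has $\phi_\pi = \phi_\sigma\chi_W^{-1}\chi_V\oplus\chi_V$, so that $\phi_\sigma = (\phi_\pi - \chi_V)\chi_V^{-1}\chi_W$, and one must check that $\chi_V\subset\phi_\pi$. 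I would verify the latter by the same $\gamma$-factor/pole argument used in the lemma before CASE I, now applied to $\pi$ on the even unitary group via \cite{MR3166215} Proposition 11.2 together with Corollary \ref{PreserveLocalFactor.2nd} (which lets us compute $\gamma(s,\pi,\chi_V^{-1},\psi)=\gamma(s,\phi_\pi\chi_V^{-1},\psi_E)$).

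For part (2), the setup is exactly symmetric: given $\sigma\in\Irrt U(V_{2m-1}^\epsilon)$ with $\pi=\theta_{2m}^{\epsilon'}(\sigma)\neq 0$, the conservation relation again forces this to be either a ``going-up'' lift from the $(2m-1)$-dimensional Hermitian space to a $2m$-dimensional skew-Hermitian space, or a ``going-down'' lift to a $(2m-2)$-dimensional one. In the first case one is literally in CASE II of the construction of $\calL_{\underline\psi}$ (with $n$ replaced appropriately), where $\calL_{\underline\psi}(\pi)=\phi_\sigma\chi_W^{-1}\chi_V\oplus\chi_V$, giving the desired $\phi_\pi=\phi_\sigma\chi_W^{-1}\chi_V\oplus\chi_V$ on the nose. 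In the ``going-down'' case, $\sigma$ is itself the up-lift $\theta_{2m-1}^+(\text{something})$ — no: rather, here $\pi$ lives on $W_{2m-2}^{\epsilon'}$ and $\sigma = \theta_{2m-1}(\pi)$ up the Hermitian tower, which is the configuration of CASE I applied to $\pi$, whence $\phi_\sigma = \phi_\pi\chi_V^{-1}\chi_W\oplus\chi_W$; solving, $\phi_\pi = (\phi_\sigma-\chi_W)\chi_W^{-1}\chi_V = \phi_\sigma\chi_W^{-1}\chi_V \ominus \chi_V$, and one checks $\chi_V\subset\phi_\sigma$ by the pole argument to rewrite this as $\phi_\pi\oplus\chi_V = \phi_\sigma\chi_W^{-1}\chi_V$, i.e.\ $\phi_\pi = \phi_\sigma\chi_W^{-1}\chi_V\oplus\chi_V$ is not quite it — I will need to be careful that the dimension bookkeeping ($|\dim V-\dim W|\le 1$ and which tower goes up) matches the stated formula, and present the two sub-cases uniformly.

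The key steps, in order, are: (i) invoke the conservation relation to split into the up-lift and down-lift cases; (ii) in the up-lift case, read off the formula directly from the definition of $\calL_{\underline\psi}$ (CASE I or CASE II as appropriate), using the already-established inclusion $\chi_W\subset\phi_\sigma$ resp.\ the corresponding inclusion for the smaller group; (iii) in the down-lift case, apply the up-lift case with the roles of the two members of the dual pair reversed, together with an inclusion of the form $\chi_V\subset\phi_\pi$ (or $\chi_V \subset \phi_\sigma$) proved by the pole-of-$\gamma$-factor argument from the lemma in CASE I, now legitimized for even unitary groups by Corollary \ref{PreserveLocalFactor.2nd}; (iv) solve the resulting linear relation among virtual representations of $WD_E$ to extract the stated identity, checking dimensions throughout.

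The main obstacle I anticipate is purely organizational rather than deep: the construction of $\calL_{\underline\psi}$ is phrased asymmetrically (lifting $\pi$ on the even group to $V_{2n+1}^+$ or $V_{2n-1}^-$), whereas Lemma \ref{Compatible.Parameter} asserts a symmetric statement valid for a lift in \emph{either} direction and for \emph{both} members of the pair. So the real work is to recognize each configuration as an instance of CASE I or CASE II after possibly swapping the two groups, and — crucially — to establish the required ``$\chi_V\subset\phi$'' or ``$\chi_W\subset\phi$'' inclusions in the cases not already covered by the lemma before CASE I. That inclusion is the only genuinely substantive point: it rests on \cite{MR3166215} Proposition 11.2 (nonvanishing of theta lift $\Leftrightarrow$ pole of the doubling $\gamma$-factor at the relevant point), combined with temperedness (so the parameter has bounded image and a pole forces a trivial summand), and it now applies to the even unitary group because Corollary \ref{PreserveLocalFactor.2nd} supplies the identity $\gamma(s,\pi,\chi,\psi)=\gamma(s,\phi_\pi\chi,\psi_E)$ that we need. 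Everything else is linear algebra with virtual Weil--Deligne representations.
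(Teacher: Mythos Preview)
Your proposal has a genuine gap stemming from a misreading of the statement. In part (1), the index $\epsilon'$ is the sign of the target Hermitian space, and the target always has dimension $2n+1$: the representation $\sigma=\theta_{2n+1}^{\epsilon'}(\pi)$ lives on $U(V_{2n+1}^{\epsilon'})$ for either choice of $\epsilon'$. Your ``going-down'' case, where you take $\sigma=\theta_{2n-1}^{-}(\pi)$ and invoke CASE~II of the construction, is simply not the case $\epsilon'=-$ of the lemma (and indeed the formula $\phi_\sigma=\phi_\pi\chi_V^{-1}\chi_W\oplus\chi_W$ would fail on dimension grounds for a $(2n-1)$-dimensional parameter). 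The case $\epsilon'=-$ that actually needs to be proved --- lifting $\pi$ to $U(V_{2n+1}^{-})$ --- is not used anywhere in the definition of $\calL_{\underline\psi}$, so it cannot be ``read off'' from CASE~I or CASE~II, nor obtained by swapping the roles of the two groups: swapping gives you $\pi=\theta_{2n}^\epsilon(\sigma)$ with $\sigma$ on an odd group, which is again not an instance of the CASE~I/II dichotomy (that dichotomy is only set up for $\pi$ on the even group). The same issue recurs symmetrically in your treatment of part (2), which is why your bookkeeping keeps breaking down.

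The paper's argument is different in kind. Rather than unwinding the definition, it uses that the map $\calL$ already respects standard $\gamma$-factors and Plancherel measures (Lemma~\ref{PreserveLocalFactor.1st}), that these invariants transform in a prescribed way across the theta lift (\cite{MR3166215} Theorem~11.5 and the Plancherel analogue, valid for \emph{any} sign $\epsilon'$), and that a tempered $L$-parameter is determined by its Plancherel measures (\cite{MR3573972} Lemma~A.6). Combining these, one computes $\mu_\psi(\tau_s\boxtimes\sigma)$ and $\gamma(s,\sigma,\chi,\psi)$ in terms of $\phi_\pi$, compares with the expressions coming from the LLC for odd unitary groups, and concludes $\phi_\sigma=\phi_\pi\chi_V^{-1}\chi_W\oplus\chi_W$ --- exactly the argument of \cite{MR3166215} p.~652, now available because Lemma~\ref{PreserveLocalFactor.1st} supplies the $\gamma$- and Plancherel identities for $\pi$. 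You already invoke the pole-of-$\gamma$ argument for the inclusions $\chi_V\subset\phi_\pi$; the fix is to use the full strength of these local-factor identities, together with \cite{MR3573972} Lemma~A.6, to pin down the entire parameter rather than just one summand.
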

\begin{proof}
With Lemma \ref{PreserveLocalFactor.1st} at hand, we can appeal to the same argument of \cite{MR3166215} Page 652 to prove this lemma. We omit the details here.
\end{proof}
As a consequence, we deduce
\begin{corollary}\label{Lift.Up}
\begin{enumerate}
\item Let $\pi\in\Irrt U(W_{2n}^\epsilon)$. If $\chi_V\not\subset\phi_\pi$, then $\theta_{2n-1}^\pm(\pi)=0$. Hence by the conservation relation, both $\theta_{2n+1}^+(\pi)$ and $\theta_{2n+1}^-(\pi)$ are non-zero.
\item Similarly, let $\sigma\in\Irrt U(V_{2m-1}^\epsilon)$. If $\chi_W\not\subset\phi_\sigma$, then $\theta_{2m-2}^\pm(\sigma)=0$. Hence by the conservation relation, both $\theta_{2m}^+(\sigma)$ and $\theta_{2m}^-(\sigma)$ are non-zero.
\end{enumerate}
\end{corollary}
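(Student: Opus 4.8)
The plan is to derive both assertions from Lemma \ref{Compatible.Parameter}, together with the fact that, for tempered representations in the almost-equal-rank range, a non-zero theta lift can be ``undone''. Parts (1) and (2) are mirror images of one another (interchange the Hermitian and skew-Hermitian towers), so I will spell out (1) and only indicate the change needed for (2).

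For (1) I would argue the contrapositive of the vanishing statement. Suppose $\theta_{2n-1}^{\epsilon'}(\pi)\neq 0$ for some $\epsilon'\in\{\pm\}$, and set $\sigma\coloneqq\theta_{2n-1}^{\epsilon'}(\pi)$, an irreducible representation of $U(V_{2n-1}^{\epsilon'})$. By \cite{MR3166215} Proposition C.4, $\sigma$ is tempered. Since $\sigma\boxtimes\pi$ occurs as a quotient of the Weil representation $\omega_{\underline{\psi},V_{2n-1}^{\epsilon'},W_{2n}^\epsilon}$, the standard reciprocity of the theta correspondence (Howe duality applied to the maximal semisimple quotients) gives $\theta_{2n}^{\epsilon}(\sigma)=\pi$. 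Now Lemma \ref{Compatible.Parameter}(2), applied with $2m=2n$, yields
\[
\phi_\pi=\phi_\sigma\,\chi_W^{-1}\chi_V\oplus\chi_V,
\]
so $\chi_V\subset\phi_\pi$. Contrapositively, $\chi_V\not\subset\phi_\pi$ forces $\theta_{2n-1}^{+}(\pi)=\theta_{2n-1}^{-}(\pi)=0$. The non-vanishing of $\theta_{2n+1}^{\pm}(\pi)$ then follows from the conservation relation exactly as in the construction of $\calL_{\underline{\psi}}$: exactly one of $\theta_{2n+1}^{+}(\pi),\ \theta_{2n-1}^{-}(\pi)$ is non-zero, and (interchanging $\calV^{+}$ and $\calV^{-}$) exactly one of $\theta_{2n+1}^{-}(\pi),\ \theta_{2n-1}^{+}(\pi)$ is non-zero; since both dimension-$(2n-1)$ lifts vanish, both dimension-$(2n+1)$ lifts are non-zero. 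For (2) one argues identically: if $\pi\coloneqq\theta_{2m-2}^{\epsilon'}(\sigma)\neq 0$, then $\pi$ is irreducible and tempered, $\theta_{2m-1}^{\epsilon}(\pi)=\sigma$ by the same reciprocity, and Lemma \ref{Compatible.Parameter}(1) (with $2n=2m-2$) gives $\phi_\sigma=\phi_\pi\,\chi_V^{-1}\chi_W\oplus\chi_W$, contradicting $\chi_W\not\subset\phi_\sigma$; the conservation relation then yields $\theta_{2m}^{\pm}(\sigma)\neq 0$.

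Since this is a corollary of the already-established Lemma \ref{Compatible.Parameter} and the conservation relation, I do not expect a genuine obstacle. The only step that needs a little care is the reversal $\theta_{2n}^{\epsilon}(\sigma)=\pi$: one must check that the non-vanishing of $\theta_{2n-1}^{\epsilon'}(\pi)$ propagates back to the original group with the correct Witt tower and the correct auxiliary datum $\underline{\psi}$, and that temperedness is preserved so that Lemma \ref{Compatible.Parameter} applies. Both points are standard Howe duality together with \cite{MR3166215} Proposition C.4, and are used freely elsewhere in the paper, so I anticipate no difficulty.
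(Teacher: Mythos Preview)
Your argument is correct and is exactly the implicit reasoning the paper has in mind: the corollary is stated immediately after Lemma \ref{Compatible.Parameter} with no proof, because the contrapositive you wrote out (non-vanishing of $\theta_{2n-1}^{\epsilon'}(\pi)$ forces $\chi_V\subset\phi_\pi$ via Lemma \ref{Compatible.Parameter}(2), and then the conservation relation gives $\theta_{2n+1}^{\pm}(\pi)\neq0$) is the evident deduction. The care you take with the reversal $\theta_{2n}^{\epsilon}(\sigma)=\pi$ and with temperedness via \cite{MR3166215} Proposition C.4 is appropriate and matches how the paper uses these facts elsewhere.
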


\begin{lemma}\label{Lift.Down}
\begin{enumerate}
\item Let $\pi\in\Irrt U(W_{2n}^\epsilon)$. If $\chi_V\subset\phi_\pi$, then exactly one of $\theta_{2n-1}^+(\pi)$ and $\theta_{2n-1}^-(\pi)$ is non-zero.
\item Similarly, let $\sigma\in\Irrt U(V_{2m-1}^\epsilon)$. If $\chi_W\subset\phi_\sigma$, then exactly one of $\theta_{2m-2}^+(\sigma)$ and $\theta_{2m-2}^-(\sigma)$ is non-zero.
\end{enumerate}
\end{lemma}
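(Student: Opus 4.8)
The plan is to reduce to a statement about theta lifts to the two skew-Hermitian spaces of the same dimension one step below, using the conservation relation together with the information on $L$-parameters obtained in Lemma \ref{Compatible.Parameter} and Corollary \ref{Lift.Up}. First I would treat part $(1)$; part $(2)$ is entirely parallel and I would just remark at the end that the same argument applies with the roles of $V$ and $W$ exchanged. So let $\pi \in \Irrt U(W_{2n}^\epsilon)$ with $\chi_V \subset \phi_\pi$. Because $\chi_V \subset \phi_\pi$, the argument of Corollary \ref{Lift.Up} cannot force both lifts $\theta_{2n-1}^+(\pi)$ and $\theta_{2n-1}^-(\pi)$ to vanish; so the content to prove is that \emph{at least one} is non-zero and that \emph{not both} are non-zero.

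For the ``at most one'' direction I would invoke the conservation relation (\cite{MR2885581}, \cite{MR3369906}) directly: the first occurrence indices $n^+(\pi)$ and $n^-(\pi)$ of $\pi$ in the two Witt towers $\calV^+$ and $\calV^-$ of Hermitian spaces satisfy $n^+(\pi) + n^-(\pi) = 2 \cdot (2n) + 2$ in the relevant normalization, so $\pi$ cannot first occur at dimension $2n-1$ in both towers simultaneously; since $\dim V_{2n-1}^\pm = 2n-1 < 2n+1$, non-vanishing of $\theta_{2n-1}^\epsilon{}'(\pi)$ would mean first occurrence at or below $2n-1$ in that tower, and the conservation relation forbids this for both signs at once. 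For the ``at least one'' direction I would argue by the going-up/going-down dichotomy: by the conservation relation exactly one of $\theta_{2n+1}^+(\pi), \theta_{2n+1}^-(\pi)$ is non-zero (this is already recorded in Section \ref{LLC.Construction}), say $\sigma := \theta_{2n+1}^{\epsilon'}(\pi) \neq 0$ on the space $V_{2n+1}^{\epsilon'}$; then by Lemma \ref{Compatible.Parameter}(1) we have $\phi_\sigma = \phi_\pi \chi_V^{-1}\chi_W \oplus \chi_W$, and since $\chi_V \subset \phi_\pi$ we get $\chi_W \subset \phi_\sigma$ with multiplicity at least $2$. Now run Lemma \ref{Lift.Down} heuristically in the other direction on $\sigma$: the hypothesis $\chi_W \subset \phi_\sigma$ is exactly what is needed to place $\sigma$ in the ``going down'' regime, so that $\sigma$ has a non-zero theta lift $\theta_{2n}^{\epsilon''}(\sigma)$ to some $W_{2n}^{\epsilon''}$ of dimension $2n < 2n+1$; by the Howe duality and the seesaw/tower structure this lift is an irreducible tempered representation whose further lift recovers $\pi$, forcing $\theta_{2n-1}^{?}(\pi)\neq 0$ for the appropriate sign. (Cleanly, one packages this as: $\theta_{2n-1}^{\epsilon''}(\pi)\ne 0$ where $\epsilon''$ is the sign for which $\theta_{2n}^{\epsilon''}(\sigma)\ne 0$, via the seesaw pair relating $U(V_{2n+1}^{\epsilon'})\times U(V_{2n-1}^{\epsilon''})$ and $U(W_{2n}^\epsilon)\times U(W_{2n}^{\epsilon''})$.)

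The main obstacle I anticipate is making the ``at least one is non-zero'' step rigorous without circular reliance on the very dichotomy being proved: the natural argument wants to apply a lower-rank instance of this same lemma to $\sigma$, so I would need either to set up a clean induction on $n$ (with the odd-unitary-group side, already established via Theorem \ref{TheoremOdd}, serving as the known input at each stage) or to extract the statement directly from the first-occurrence bookkeeping in \cite{MR3166215} and the conservation relation. Concretely, I expect the cleanest route is: combine the conservation relation for $\pi$ in the tower $\calV^\pm$ with the tempered-lift non-vanishing criterion from \cite{MR3166215} (Proposition C.4 and the surrounding first-occurrence analysis), phrased so that $\chi_V \subset \phi_\pi$ is precisely equivalent to ``the lift to $V_{2n-1}$ is non-zero in exactly one tower.'' The remaining verifications — that the non-zero lift is irreducible and tempered, and that the parameter bookkeeping is consistent — are routine given Lemma \ref{PreserveLocalFactor.1st}, Lemma \ref{Compatible.Parameter}, and \cite{MR3166215} Proposition C.4, so I would not belabor them.
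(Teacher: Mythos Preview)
Your ``at most one'' direction via the conservation relation is fine, and the paper also relies on conservation implicitly. The problem is entirely in the ``at least one'' direction, and the argument you sketch does not get there.

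Going up to $\sigma=\theta_{2n+1}^{\epsilon'}(\pi)$ and observing $\chi_W\subset\phi_\sigma$ only tells you that $\sigma$ has a nonzero theta lift \emph{back to dimension $2n$}, namely $\pi$ itself; it gives no information about $\theta_{2n-1}^{\pm}(\pi)$. (Incidentally, it is not true that ``exactly one of $\theta_{2n+1}^+(\pi),\theta_{2n+1}^-(\pi)$ is nonzero''; what Section~\ref{LLC.Construction} records is the dichotomy between $\theta_{2n+1}^+(\pi)$ and $\theta_{2n-1}^-(\pi)$.) The seesaw you invoke relates $\Hom$-spaces, not first occurrences in the way you need, and the suggestion to extract the statement directly from \cite{MR3166215} runs into the fact that Proposition~11.2 there gives only the implication ``nonzero going-down lift $\Rightarrow$ pole of $\gamma$'', not its converse; the converse is essentially what you are trying to prove. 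Setting up an induction on $n$ does not help either, since the question for $\pi$ on $U(W_{2n}^\epsilon)$ does not reduce to one on a smaller even unitary group.

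The paper's proof supplies the missing idea: a packet-counting sandwich. One builds an injection $\theta_{2n+1}:\Pi_\phi\hookrightarrow\Pi_{\phi^+}$ (going up) and, from the \emph{odd} side where the LLC is already known, injections from $\Pi_{\phi^-}$ (or from two copies of $\Pi_{\phi^-}$, depending on whether $2\chi_V\subset\phi$) into $\Pi_\phi$ via $\theta_{2n}$. Since $|\Pi_{\phi^\pm}|=|\widehat{\calS_{\phi^\pm}}|$ are known from Theorem~\ref{TheoremOdd} and match up with $|\widehat{\calS_\phi}|$, the sandwich $|\Pi_{\phi^-}|\le|\Pi_\phi|\le|\Pi_{\phi^+}|$ (resp.\ $2|\Pi_{\phi^-}|\le|\Pi_\phi|\le|\Pi_{\phi^+}|$) collapses to equalities, forcing the going-down map to be \emph{surjective}. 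Surjectivity is exactly ``every $\pi\in\Pi_\phi$ is a theta lift from some $\sigma$ on $U(V_{2n-1}^{\pm})$'', i.e.\ at least one of $\theta_{2n-1}^{\pm}(\pi)$ is nonzero. This counting step, which leverages the already-established odd-unitary LLC, is the crux you are missing.
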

\begin{proof}
We only prove the first statement here. The proof of the second statement is similar. We shall prove this by counting fibers of the map $\calL=\calL_{\underline{\psi}}$.
We define a map 
\begin{equation*}
\theta_{2n+1}:\Irr U(W_{2n}^+)\sqcup \Irr U(W_{2n}^-)\lra \Irr U(V_{2n+1}^+)\sqcup \Irr U(V_{2n+1}^-)
\end{equation*}
as follows:
\begin{equation*}
\pi'\mapsto\begin{cases}
                         \theta_{2n+1}^+(\pi') &\textit{if } \theta_{2n+1}^+(\pi')\neq 0;\\
                         \theta_{2n+1}^-(\pi') &\textit{otherwise}.

                   \end{cases}
\end{equation*}
By the Howe duality and the conservation relation, this map is well-defined and injective. For each tempered $L$-parameter $\phi$, by Lemma \ref{Compatible.Parameter}, the restriction of this map to the $L$-packet $\Pi_{\phi}$ gives an injection
\begin{equation*}
\theta_{2n+1}:\Pi_{\phi}\hookrightarrow\Pi_{\phi^+},
\end{equation*}
where $\phi^+\coloneqq\phi\chi_V^{-1}\chi_W\oplus\chi_W$.\\

Now we let $\phi=\phi_\pi$. By our assumption, $\chi_V\subset\phi$. Let $\phi^-\coloneqq(\phi-\chi_V)\chi_V^{-1}\chi_W$.\\

\underline{CASE I:} If $2\chi_V\subset\phi$, then $\chi_W\subset\phi^-$. Similarly we can define another map
\begin{equation*}
\theta_{2n}:\Irr U(V_{2n-1}^+)\sqcup \Irr U(V_{2n-1}^-)\lra \Irr U(W_{2n}^+)\sqcup \Irr U(W_{2n}^-)
\end{equation*}
in the same way as $\theta_{2n+1}$. Again by Lemma \ref{Compatible.Parameter}, the restriction of this map to the packet $\Pi_{\phi^-}$ gives an injection
\begin{equation*}
\theta_{2n}:\Pi_{\phi^-}\hookrightarrow\Pi_{\phi}.
\end{equation*}
Hence we have
\begin{equation*}
|\Pi_{\phi^-}|\leq|\Pi_{\phi}|\leq|\Pi_{\phi^+}|.
\end{equation*}
But in this case, $\calS_{\phi^-}\simeq\calS_\phi\simeq\calS_{\phi^+}$, by using the LLC for odd unitary groups, we get
\begin{equation*}
|\Pi_{\phi^-}|=|\widehat{\calS_{\phi^-}}|=|\widehat{\calS_{\phi^+}}|=|\Pi_{\phi^+}|.
\end{equation*}
This implies that $\theta_{2n}$ is surjective. Hence in this case the lemma holds.\\

\underline{CASE II:} If $2\chi_V\not\subset\phi$, then $\chi_W\not\subset\phi^-$. We can define a map
\begin{equation*}
\theta_{2n}^+\sqcup\theta_{2n}^-: \Pi_{\phi^-}\sqcup\Pi_{\phi^-}\lra\Pi_{\phi}
\end{equation*}
by
\begin{equation*}
\begin{cases}
\sigma\mapsto\theta_{2n}^+(\sigma) &\textit{for } \sigma\textit{ in the first copy of }\Pi_{\phi^-};\\
\sigma\mapsto\theta_{2n}^-(\sigma) &\textit{for } \sigma\textit{ in the second copy of }\Pi_{\phi^-}.
\end{cases}
\end{equation*}
Again, by the Howe duality, the conservation relation, and Corallary \ref{Lift.Up}, it's easy to see that this map is well-defined and injective. Thus we have
\begin{equation*}
2|\Pi_{\phi^-}|\leq|\Pi_{\phi}|\leq|\Pi_{\phi^+}|.
\end{equation*}
Also, in this case, 
\begin{equation*}
\calS_{\phi^+}\simeq\calS_\phi\simeq\calS_{\phi^-}\oplus(\ZZ/2\ZZ)e,
\end{equation*} 
where $e$ is the element in $\calS_\phi$ corresponding to $\chi_V\subset\phi$. By using the LLC for odd unitary groups, we get
\begin{equation*}
2|\Pi_{\phi^-}|=2|\widehat{\calS_{\phi^-}}|=|\widehat{\calS_{\phi^+}}|=|\Pi_{\phi^+}|.
\end{equation*}
This implies that $\theta_{2n}^+\sqcup\theta_{2n}^-$ is surjective. Hence in this case the lemma also holds.
\end{proof}

As a consequence of this Lemma, we can compute the sizes of $L$-packets.
\begin{corollary}\label{Size.Packet}
Let $\phi\in\Parat(2n)$. Then the size of the $L$-packet $\Pi_{\phi}$ is exactly the same as the size of $\wh{\calS_\phi}$. In particular, the packet is non-empty. 
\end{corollary}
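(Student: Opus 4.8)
The plan is to pin down $|\Pi_\phi|$ by comparing it, through the theta lifts $\theta^\pm_{2n+1}$ already analyzed above, with the size of the tempered $L$-packet $\Pi_{\phi^+}$ for the \emph{odd} unitary groups $U(V_{2n+1}^\pm)$, where $\phi^+\coloneqq\phi\chi_V^{-1}\chi_W\oplus\chi_W$. Since $\phi^+\in\Parat(2n+1)$, Theorem~\ref{TheoremOdd} already gives $|\Pi_{\phi^+}|=|\wh{\calS_{\phi^+}}|$, so the whole statement reduces to (i) expressing $|\Pi_\phi|$ in terms of $|\Pi_{\phi^+}|$ via theta, and (ii) matching the two component groups $\calS_\phi$ and $\calS_{\phi^+}$ — exactly the kind of bookkeeping already invoked in the proof of Lemma~\ref{Lift.Down}. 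I would split into two cases according to whether $\chi_V\subset\phi$.

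If $\chi_V\subset\phi$, essentially nothing new is needed: the argument inside the proof of Lemma~\ref{Lift.Down} already produces, with $\phi^-\coloneqq(\phi-\chi_V)\chi_V^{-1}\chi_W$, a chain of injections giving $|\Pi_{\phi^-}|\le|\Pi_\phi|\le|\Pi_{\phi^+}|$ when $\chi_V$ has multiplicity $\ge2$ in $\phi$, and $2|\Pi_{\phi^-}|\le|\Pi_\phi|\le|\Pi_{\phi^+}|$ when $\chi_V$ has multiplicity one. Since $\calS_{\phi^-}\simeq\calS_\phi\simeq\calS_{\phi^+}$ in the first sub-case and $\calS_{\phi^+}\simeq\calS_\phi\simeq\calS_{\phi^-}\oplus\ZZ/2\ZZ$ in the second, the LLC for odd unitary groups makes the two outer terms of each chain equal, forcing $|\Pi_\phi|=|\Pi_{\phi^+}|=|\wh{\calS_{\phi^+}}|=|\wh{\calS_\phi}|$.

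The remaining case is $\chi_V\not\subset\phi$. Here Corollary~\ref{Lift.Up}(1) says that for every $\pi\in\Pi_\phi$ both $\theta^+_{2n+1}(\pi)$ and $\theta^-_{2n+1}(\pi)$ are non-zero, and by Lemma~\ref{Compatible.Parameter}(1) they lie in $\Pi_{\phi^+}$; so I would form the map $\theta^+_{2n+1}\sqcup\theta^-_{2n+1}\colon\Pi_\phi\sqcup\Pi_\phi\to\Pi_{\phi^+}$. It is injective because the two components have images on the two different spaces $U(V_{2n+1}^+)$ and $U(V_{2n+1}^-)$, while the Howe duality gives injectivity on each component. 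For surjectivity I would take $\sigma\in\Pi_{\phi^+}$, note $\chi_W\subset\phi_\sigma=\phi^+$, apply Lemma~\ref{Lift.Down}(2) to see that exactly one of $\theta^\pm_{2n}(\sigma)$ is non-zero — call it $\pi$ — use the Howe duality to get $\theta^{\epsilon'}_{2n+1}(\pi)=\sigma$ for the sign $\epsilon'$ of the space carrying $\sigma$, and then feed $\pi$ back into Lemma~\ref{Compatible.Parameter}(1) to obtain $\phi^+=\phi_\sigma=\phi_\pi\chi_V^{-1}\chi_W\oplus\chi_W$; cancelling $\chi_W$ yields $\phi_\pi=\phi$, i.e.\ $\pi\in\Pi_\phi$. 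Hence the map is a bijection, so $2|\Pi_\phi|=|\Pi_{\phi^+}|=|\wh{\calS_{\phi^+}}|$; and since $\chi_W$ occurs in $\phi^+$ with multiplicity one and does not occur in $\phi\chi_V^{-1}\chi_W$, we have $\calS_{\phi^+}\simeq\calS_\phi\oplus\ZZ/2\ZZ$, whence $|\wh{\calS_{\phi^+}}|=2|\wh{\calS_\phi}|$ and $|\Pi_\phi|=|\wh{\calS_\phi}|$. Non-emptiness is then immediate, since $\wh{\calS_\phi}$ always contains the trivial character.

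I expect the only real subtlety to be the component-group bookkeeping — checking that twisting $\phi$ by $\chi_V^{-1}\chi_W$ and adjoining $\chi_W$ alters $\calS_\phi$ exactly as claimed (by nothing, or by a single $\ZZ/2\ZZ$) — together with making sure the surjectivity step in the last case is airtight. Both are routine once the parameter formulas of Lemma~\ref{Compatible.Parameter} and the conservation relation are in hand, and the component-group identifications are precisely the ones already used without comment in the proof of Lemma~\ref{Lift.Down}.
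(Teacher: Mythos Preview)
Your proposal is correct and follows essentially the same approach as the paper. For the case $\chi_V\not\subset\phi$, the only cosmetic difference is that the paper works with one sign at a time, showing each map $\theta_{2n+1}^{\epsilon'}:\Pi_\phi\to\Pi_{\phi^+}^{\epsilon'}$ is a bijection and then invoking the isomorphism $\calS_\phi\simeq\overline{\calS_{\phi^+}}$ to get $|\Pi_\phi|=|\Pi_{\phi^+}^{\epsilon'}|=|\wh{\overline{\calS_{\phi^+}}}|=|\wh{\calS_\phi}|$, whereas you bundle the two signs together into a bijection $\Pi_\phi\sqcup\Pi_\phi\simeq\Pi_{\phi^+}$ and use $\calS_{\phi^+}\simeq\calS_\phi\oplus\ZZ/2\ZZ$; the surjectivity argument you spell out (via Lemma~\ref{Lift.Down}(2), Howe duality, and Lemma~\ref{Compatible.Parameter}(1)) is exactly what the paper's one-line ``Lemma~\ref{Lift.Down} tells us these injections are also surjective'' is invoking.
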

\begin{proof}
The case when $\chi_V\subset\phi$ follows directly from the proof of Lemma \ref{Lift.Down}. So it is sufficient to prove the case when $\chi_V\not\subset\phi$. Similar to the proof of Lemma \ref{Lift.Down}, the theta lift gives us injections
\begin{equation*}
\theta_{2n+1}^\epsilon:\Pi_{\phi}\hookrightarrow\Pi_{\phi^+}^\epsilon
\end{equation*}
for $\epsilon=\pm1$. The Lemma \ref{Lift.Down} tells us these injections are also surjective. Notice that in this case, we have
\[
	\calS_{\phi^+}\simeq\calS_\phi\oplus(\ZZ/2\ZZ)e,
\]
where $e$ is the element in $\calS_{\phi^+}$ corresponding to $\chi_W\subset\phi^+$. This induces an isomorphism
\[
	\calS_\phi\hookrightarrow\calS_{\phi^+}\twoheadrightarrow\overline{\calS_{\phi^+}}.
\]
Hence by the LLC for odd unitary groups, we conclude that
\[
	|\Pi_\phi|=|\Pi_{\phi^+}^\epsilon|=|\widehat{\overline{\calS_{\phi^+}}}|=|\widehat{\calS_\phi}|
\]
as desired.
\end{proof}

\subsection{Construction of $\calJ_\scrW$}\label{LLC.Construction.J}
Now given a tempered parameter $\phi\in\Parat(2n)$, we have shown the size of the $L$-packet $\Pi_{\phi}$ is the same as $\widehat{\calS_\phi}$. Next, we are going to define the bijection
\begin{equation*}
\calJ_{\scrW}:\Pi_{\phi}\lra\widehat{\calS_\phi}.
\end{equation*}
We separate the construction into two cases.\\

\underline{CASE I:} If $\chi_V\not\subset\phi$, then by Corallary \ref{Lift.Up}, we have $\sigma\coloneqq\theta_{2n+1}^+(\pi)\neq0$. And by our construction, $\phi_\sigma=\phi\chi_V^{-1}\chi_W\oplus\chi_W$. Therefore
\begin{equation*}
\calS_{\phi_\sigma}\simeq\calS_\phi\oplus(\ZZ/2\ZZ)e,
\end{equation*} 
where $e$ is the element in $\calS_{\phi_\sigma}$ corresponding to $\chi_W\subset\phi_\sigma$. This induces an isomorphism
\begin{equation*}
\iota:\calS_\phi\hookrightarrow\calS_{\phi_\sigma}\twoheadrightarrow\overline{\calS_{\phi_\sigma}}.
\end{equation*}
In this case we define the character $\eta\in\widehat{\calS_\phi}$ associated to $\pi$ to be
\begin{equation*}
\eta\coloneqq\eta_\sigma\big|_{\calS_\phi}.
\end{equation*}

\underline{CASE II:} If $\chi_V\subset\phi$, then by the Lemma \ref{Lift.Down}, there exists an unique $\epsilon'$, such that $\theta_{2n-1}^{\epsilon'}(\pi)$ is non-zero, hence $\sigma\coloneqq\theta_{2n+1}^{\epsilon'}(\pi)$ is also non-zero by the persistence of theta lifts. According to Lemma \ref{Compatible.Parameter}, $\phi_\sigma=\phi\chi_V^{-1}\chi_W\oplus\chi_W$. Thus
\begin{equation*}
\calS_\phi\simeq\calS_{\phi_\sigma}.
\end{equation*}
In this case we define the character $\eta\in\widehat{\calS_\phi}$ associated to $\pi$ to be
\begin{equation*}
\eta\coloneqq\eta_\sigma\big|_{\calS_\phi}.
\end{equation*}

By the LLC for odd unitary groups, the Howe duality, and Corollary \ref{Size.Packet}, it is easy to check that the assignment constructed here gives a bijection between $\Pi_{\phi}$ and $\wh{\calS_\phi}$. 

\subsection{From tempered to non-tempered}
So far, we have attached $L$-parameters and characters of component groups for all irreducible tempered representations of $U(W_{2n}^\epsilon)$. Next, for an irreducible non-tempered representation $\pi$ of $U(W_{2n}^\epsilon)$, we shall attach an $L$-parameter and a character of component group to it. Readers may also refer to \cite{MR3271238}.\\

Let $\pi$ be an irreducible smooth representation of $U(W_{2n}^\epsilon)$. By Langlands' classification for $p$-adic groups \cite{MR507262}, \cite{MR2050093}, we know that $\pi$ is the unique irreducible quotient of a standard module
\begin{equation*}
\Ind_P^{U(W_{2n}^\epsilon)}\left(\tau_1|\det|^{s_1}\boxtimes\cdots\boxtimes\tau_r|\det|^{s_r}\boxtimes\pi_0\right),
\end{equation*}
where $P$ is a parabolic subgroup of $U(W_{2n}^\epsilon)$, with a Levi component
\begin{equation*}
M\simeq GL_{k_1}(E)\times\cdots\times GL_{k_r}(E)\times U(W_{2n}^\epsilon),\quad k=k_1+\cdots+k_r;
\end{equation*}
${\tau_i}$ is an irreducible (unitary) square-integrable representation of $GL_{k_i}(E)$, $s_i$ is a real number such that
\begin{equation*}
s_1\geq\cdots\geq s_r>0;
\end{equation*}
and $\pi_0$ is an irreducible tempered representation of $U(W_{2n-2k}^\epsilon)$. Let $\phi_{\tau_i}$ be the $L$-parameter of $\tau_i$, and $\pi_0=\pi(\phi_0,\eta_0)$. We define the $L$-parameter of $\pi$ to be
\begin{equation*}
\phi=(\phi_{\tau_1}|\cdot|^{s_1}\oplus\cdots\oplus\phi_{\tau_r}|\cdot|^{s_r})\oplus\phi_0\oplus\left((\phi_{\tau_1}|\cdot|^{s_1}\oplus\cdots\oplus\phi_{\tau_r}|\cdot|^{s_r})^c\right)^\vee.
\end{equation*}
Notice that $\calS_\phi\simeq\calS_{\phi_0}$. Via this natural identification, we define the character in $\widehat{\calS_\phi}$ associated to $\pi$ to be
\begin{equation*}
\eta=\eta_0.
\end{equation*}
Since the datum $(P,\tau_i,s_i,\pi_0)$ is uniquely determined by $\pi$ up to Weyl group conjugate, $\phi$ and $\eta$ are well-defined.\\

From now on, we shall use $\pi(\phi,\eta)$ to denote the element in $\Pi_\phi$ corresponding to $\eta$. It follows directly from our construction that 
\begin{proposition}
The LLC we constructed for even unitary groups is compatible with Langlands quotients.
\end{proposition}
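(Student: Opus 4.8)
The plan is to observe that the assertion is essentially a matter of unwinding the definitions made in the subsection ``From tempered to non-tempered,'' and to record the small amount of bookkeeping that this requires. First I would recall that, by Langlands' classification for $p$-adic groups, any $\pi\in\Irr U(W_{2n}^\epsilon)$ is the unique irreducible quotient of a standard module $\Ind_P^{U(W_{2n}^\epsilon)}(\tau_1|\det|^{s_1}\boxtimes\cdots\boxtimes\tau_r|\det|^{s_r}\boxtimes\pi_0)$ with $s_1\geq\cdots\geq s_r>0$, $\tau_i$ discrete series of $GL_{k_i}(E)$, and $\pi_0$ tempered, and that the datum $(P,\tau_i,s_i,\pi_0)$ is determined by $\pi$ up to association. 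This is precisely the datum used to define $\phi=\calL(\pi)$ and $\eta=\calJ_{\scrW}(\pi)$, so by construction $\phi$ has the shape appearing in the eighth property of Theorem \ref{MainTheorem} and $\pi$ is the Langlands quotient of the corresponding standard module.

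Next I would check that the assignment $\pi\mapsto(\phi,\eta)$ is a bijection onto pairs $(\phi,\eta)$ with $\phi\in\Para(2n)$ and $\eta\in\widehat{\calS_\phi}$, which is what legitimizes the notation $\pi(\phi,\eta)$. Here one uses that a parameter $\phi$ of the given shape determines uniquely its tempered part $\phi_0$ and the data $(\phi_{\tau_i},s_i)$ (via the unique decomposition $\phi=\phi_+\oplus\phi_0\oplus(\phi_+^c)^\vee$ with $\phi_+=\bigoplus_i\phi_{\tau_i}|\cdot|^{s_i}$ and $s_1\geq\cdots\geq s_r>0$), hence recovers the discrete series $\tau_i$ and the exponents $s_i$; that the natural isomorphism $\calS_\phi\simeq\calS_{\phi_0}$ transports $\eta$ to some $\eta_0\in\widehat{\calS_{\phi_0}}$; and that, by the bijection $\calJ_{\scrW}\colon\Pi_{\phi_0}\to\widehat{\calS_{\phi_0}}$ already established for tempered parameters (Corollary \ref{Size.Packet} and the construction preceding it), there is a unique tempered $\pi_0=\pi(\phi_0,\eta_0)$. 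Injectivity and surjectivity of $\pi\mapsto(\phi,\eta)$ then follow from the uniqueness of the Langlands quotient together with the tempered bijection.

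Combining these two points gives the statement: for $\phi$ of the prescribed form and $\eta\in\widehat{\calS_\phi}$, the representation $\pi(\phi,\eta)$ is by definition the one whose Langlands data are $(P,\tau_i,s_i,\pi_0)$ with $\pi_0=\pi(\phi_0,\eta|_{\calS_{\phi_0}})$, i.e.\ it is exactly the unique irreducible quotient of $\Ind_P^{U(W_{2n}^\epsilon)}(\tau_1|\det|^{s_1}\boxtimes\cdots\boxtimes\tau_r|\det|^{s_r}\boxtimes\pi_0)$, which is the assertion of compatibility with Langlands quotients. I do not expect a genuine obstacle here; the only points demanding care are the convention checks --- that the identification $\calS_\phi\simeq\calS_{\phi_0}$ used to transport $\eta$ is the one fixed throughout, and that the LLC for the smaller group $U(W_{2n-2k}^\epsilon)$ is taken with respect to the Whittaker datum induced from $\scrW$ along the Witt tower, as in the Remark following Theorem \ref{MainTheorem}.
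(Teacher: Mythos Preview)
Your proposal is correct and follows the same approach as the paper: the paper simply states that the proposition ``follows directly from our construction'' with no further argument, and your write-up is precisely the routine unwinding of that construction (Langlands classification plus the tempered bijection) that justifies this claim. The extra bookkeeping you record about bijectivity of $\pi\mapsto(\phi,\eta)$ is not strictly needed for property~(8) as stated, but it is harmless and in the spirit of what the paper implicitly uses when introducing the notation $\pi(\phi,\eta)$.
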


An easy computation shows that
\begin{proposition}\label{PreserveLocalFactor.Fin}
The LLC we constructed for even unitary groups respects standard $\gamma$-factors and Plancherel measures.
\end{proposition}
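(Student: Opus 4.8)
The plan is to reduce everything to the tempered case, which has already been settled in Corollary \ref{PreserveLocalFactor.2nd}, using the multiplicativity of both local factors under parabolic induction. Recall that an arbitrary $\pi\in\Irr U(W_{2n}^\epsilon)$ was constructed as the Langlands quotient of a standard module
\[
\Sigma\coloneqq\Ind_P^{U(W_{2n}^\epsilon)}\left(\tau_1|\det|^{s_1}\boxtimes\cdots\boxtimes\tau_r|\det|^{s_r}\boxtimes\pi_0\right),
\]
with $\pi_0$ irreducible tempered of parameter $\phi_0$, and that $\phi=\calL(\pi)=(\bigoplus_i\phi_{\tau_i}|\cdot|^{s_i})\oplus\phi_0\oplus((\bigoplus_i\phi_{\tau_i}|\cdot|^{s_i})^c)^\vee$. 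Both the doubling $\gamma$-factor and the Plancherel measure $\mu_\psi(\tau_s\boxtimes-)$ depend on $\pi$ only through its supercuspidal support --- the former by the ``Ten Commandments'', the latter by Harish-Chandra's theory of the $\mu$-function --- so they are unchanged if $\pi$ is replaced by $\Sigma$, and we may compute with $\Sigma$.

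For the standard $\gamma$-factor, the multiplicativity commandment gives
\[
\gamma(s,\Sigma,\chi,\psi)=\gamma(s,\pi_0,\chi,\psi)\cdot\prod_{i=1}^{r}\gamma(s,\tau_i\chi|\det|^{s_i},\psi_E)\cdot\gamma(s,(\tau_i^c)^\vee\chi|\det|^{s_i},\psi_E),
\]
the $GL_{k_i}(E)$ factors being Godement--Jacquet $\gamma$-factors. On the parameter side, additivity of the Galois $\gamma$-factor in the representation variable decomposes $\gamma(s,\phi\chi,\psi_E)$ into the matching product. One then matches the pieces: the tempered piece is exactly Corollary \ref{PreserveLocalFactor.2nd}, i.e.\ $\gamma(s,\pi_0,\chi,\psi)=\gamma(s,\phi_0\chi,\psi_E)$, and the $GL_{k_i}(E)$ pieces equal the corresponding Galois factors by the local Langlands correspondence for $GL_{k_i}(E)$, which attaches $\phi_{\tau_i}$ to $\tau_i$ (and respects conjugation and contragredient). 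Hence $\gamma(s,\pi,\chi,\psi)=\gamma(s,\phi\chi,\psi_E)$.

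For the Plancherel measure, let $\tau$ be an irreducible discrete series of $GL_k(E)$ with parameter $\phi_\tau$. The multiplicativity of Plancherel measures (induction in stages) factors $\mu_\psi(\tau_s\boxtimes\Sigma)$ as $\mu_\psi(\tau_s\boxtimes\pi_0)$ times a product of $GL\times GL$ Plancherel measures attached to the pairs $(\tau,\tau_i|\det|^{s_i})$ and $(\tau,(\tau_i^c)^\vee|\det|^{s_i})$. By Corollary \ref{PreserveLocalFactor.2nd}, the factor $\mu_\psi(\tau_s\boxtimes\pi_0)$ equals the product of the four $\gamma$-factors appearing in property (4) of Theorem \ref{MainTheorem} with $\phi$ replaced by $\phi_0$ (the Asai contribution being $As^{+}$ throughout, as the relevant dimensions are even); evaluating the $GL\times GL$ Plancherel measures through the local Langlands correspondence for $GL$ as Rankin--Selberg $\gamma$-factors of $\phi_\tau$ against the $\phi_{\tau_i}|\cdot|^{s_i}$ and their conjugate-duals accounts exactly for the difference between $\phi_\tau\otimes\phi^\vee$ and $\phi_\tau\otimes\phi_0^\vee$ (and between $\phi_\tau^\vee\otimes\phi$ and $\phi_\tau^\vee\otimes\phi_0$). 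Reassembling yields the formula of property (4) for $\pi$.

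The only genuine work is the bookkeeping in these two decompositions --- tracking the conjugate-dual/Galois twists on the $GL$ pieces and the shifts by $|\cdot|^{s_i}$, and checking that the Asai contribution passes through the tempered step untouched --- so there is no real obstacle beyond organizing the computation; the substantive input, namely that $\calL$ respects these factors on tempered representations, has already been supplied by Corollary \ref{PreserveLocalFactor.2nd} (and ultimately by the properties of theta lifts imported from \cite{MR3166215}). This is why the statement can be flagged as ``an easy computation''.
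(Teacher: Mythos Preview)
Your approach is exactly the paper's: the tempered case is Corollary \ref{PreserveLocalFactor.2nd}, and the extension to arbitrary $\pi$ is via the multiplicativity of the doubling $\gamma$-factor and of the Plancherel measure under parabolic induction (the paper simply cites \cite{MR3166215} Section 10.2 and Appendix B.5 for these). One small slip in your displayed multiplicativity formula: the conjugate-dual $GL$ piece should carry $|\det|^{-s_i}$ rather than $|\det|^{s_i}$, since $((\phi_{\tau_i}|\cdot|^{s_i})^c)^\vee=(\phi_{\tau_i}^c)^\vee|\cdot|^{-s_i}$; this is a bookkeeping typo and does not affect the argument.
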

\begin{proof}
We have proved this proposition for tempered representations. The general case follows from Lemma \ref{PreserveLocalFactor.2nd} and multiplicativity of standard $\gamma$-factors \& the Plancherel measures (see \cite{MR3166215} Section 10.2, and Appendix B.5).
\end{proof}

\subsection{Preservation}
In this section we prove two further properties of the map $\calL$.

\begin{proposition}\label{Preserve.S-I}
The map $\calL$ preserves square-integrability.
\end{proposition}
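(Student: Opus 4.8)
The plan is to reduce the statement to the already-established properties of $\calL$ — namely that it respects standard $\gamma$-factors and Plancherel measures (Corollary \ref{PreserveLocalFactor.2nd}, Proposition \ref{PreserveLocalFactor.Fin}) and is compatible with Langlands quotients — together with a representation-theoretic criterion for square-integrability in terms of reducibility of induced representations. Concretely, I would use the following standard dichotomy: for $\pi \in \Irr U(W_{2n}^\epsilon)$, $\pi$ is square-integrable if and only if $\pi$ is tempered and is \emph{not} a subrepresentation of any $\Ind_Q^{U(W_{2n}^\epsilon)}(\tau \boxtimes \pi_0)$ with $\tau$ a discrete series of some $GL_k(E)$, $k \geq 1$, and $\pi_0$ tempered; equivalently, on the parameter side, $\phi = \calL(\pi)$ is square-integrable (multiplicity-free, all summands conjugate self-dual of the correct sign) if and only if it is tempered and does not contain a summand of the form $\phi_\tau \oplus (\phi_\tau^c)^\vee$ nor a repeated conjugate self-dual summand. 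Since we have already shown $\calL$ preserves temperedness, it remains to match up the "extra" reducibility conditions on the two sides.

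First I would record that by Proposition \ref{PreserveLocalFactor.Fin} (or Corollary \ref{PreserveLocalFactor.2nd} in the tempered case), for any irreducible discrete series $\tau$ of $GL_k(E)$ with parameter $\phi_\tau$, the Plancherel measure $\mu_\psi(\tau_s \boxtimes \pi)$ is given by the explicit product of Galois $\gamma$-factors in terms of $\phi = \calL(\pi)$ and $\phi_\tau$. The point at which $\Ind_P^{U(\wt W)}(\tau_s \boxtimes \pi)$ can be reducible at $s = 0$ — which is exactly the obstruction to $\pi$ being a "building block" — is detected by the order of vanishing/pole of $\mu_\psi(\tau_s \boxtimes \pi)$ at $s = 0$, and by the formula this is governed purely by whether $\phi_\tau$ (or $(\phi_\tau^c)^\vee$) occurs in $\phi$ and with what sign. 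Thus the failure of $\pi$ to be square-integrable, read through reducibility of such induced representations, is equivalent to the failure of $\phi$ to be square-integrable. I would also invoke Lemma \ref{theta.ind} together with the behaviour of $L$-parameters under theta lifting (Lemma \ref{Compatible.Parameter}) if a direct comparison is cleaner: a tempered $\pi$ on $U(W_{2n}^\epsilon)$ sits inside $\Ind_Q(\tau\chi_V \boxtimes \pi_0)$ iff its theta lift $\sigma$ sits inside the corresponding induced representation on the odd unitary group, and for odd unitary groups square-integrability is already known to be preserved by Theorem \ref{TheoremOdd}; then one transports the conclusion back across the theta correspondence using the explicit relation $\phi_\sigma = \phi_\pi \chi_V^{-1}\chi_W \oplus \chi_W$.

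The main obstacle I anticipate is the bookkeeping in the theta-lifting comparison: the relation $\phi_\sigma = \phi_\pi\chi_V^{-1}\chi_W \oplus \chi_W$ introduces the extra summand $\chi_W$, so $\phi_\sigma$ is \emph{never} square-integrable even when $\phi_\pi$ is, and one must be careful that this "artificial" non-square-integrability of $\sigma$ does not contaminate the argument — i.e. one must check that the only source of non-square-integrability of $\sigma$ beyond the single copy of $\chi_W$ corresponds precisely to non-square-integrability of $\phi_\pi$, handling separately the subcases $\chi_V \subset \phi_\pi$ and $\chi_V \not\subset \phi_\pi$ (and, in the former, whether the multiplicity of $\chi_V$ in $\phi_\pi$ is one or two) exactly as in the proof of Lemma \ref{Lift.Down}. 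I expect this to be a short but slightly fiddly case analysis. Once that is in place, the equivalence "$\pi$ square-integrable $\iff$ $\calL(\pi)$ square-integrable" follows, which is the assertion that $\calL$ preserves square-integrability.
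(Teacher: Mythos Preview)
Your second approach---transferring the question through the theta lift to the odd unitary side where Theorem~\ref{TheoremOdd} is available---is the paper's route, but the paper carries it out more directly. Rather than characterizing square-integrability as non-embedding into induced representations and then pushing such embeddings across with Lemma~\ref{theta.ind}, the paper simply quotes \cite{MR3166215} Corollary~C.3, a black-box result that in the almost-equal-rank setting theta lifting preserves square-integrability under the appropriate first-occurrence condition. This bypasses most of the bookkeeping you anticipate.

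Your diagnosis of the obstacle is also off: it is \emph{not} true that $\phi_\sigma = \phi_\pi\chi_V^{-1}\chi_W \oplus \chi_W$ is ``never'' square-integrable. When $\chi_V \not\subset \phi_\pi$ the extra $\chi_W$ collides with nothing, and $\phi_\sigma$ is square-integrable whenever $\phi_\pi$ is. The dichotomy $\chi_V \subset \phi$ versus $\chi_V \not\subset \phi$ is indeed the right one, but the paper uses it to choose the \emph{direction} of the lift: up to $U(V_{2n+1}^+)$ in the first case (where Corollary~\ref{Lift.Up} gives $\theta_{2n-1}^+(\pi)=0$, so Corollary~C.3 applies and $\sigma$ is square-integrable), and \emph{down} to $U(V_{2n-1}^{\epsilon'})$ in the second case (via Lemma~\ref{Lift.Down}), where the relation reads $\phi = \phi_\sigma\chi_W^{-1}\chi_V \oplus \chi_V$ and one checks separately that $\chi_W \not\subset \phi_\sigma$.

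Your first approach via Plancherel measures has a genuine gap. The measure $\mu_\psi(\tau_s \boxtimes \pi)$ controls reducibility of the induction \emph{from} $\tau\boxtimes\pi$ to a larger group; it does not directly detect whether $\pi$ itself occurs inside an induction from a smaller Levi. Knowing all such Plancherel measures pins down $\phi$, but to then conclude that $\pi$ is square-integrable when $\phi$ is you would need something like Proposition~\ref{LIR.Parameter}, which in the paper comes only after the present proposition.
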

\begin{proof}
Let $\pi$ be an irreducible smooth representation of $U(W_{2n}^\epsilon)$, and $\phi$ be the $L$-parameter of $\pi$. We first prove that if $\pi$ is square-integrable, then $\phi$ is square-integrable. We divide this into two cases.\\

\underline{CASE I:} If $\chi_V\not\subset\phi$, then by the Corollary \ref{Lift.Up}, $\theta_{2n-1}^+(\pi)=0$ and $\sigma\coloneqq\theta_{2n+1}^+(\pi)\neq0$. Hence by \cite{MR3166215} Corollary C.3, $\sigma$ is also square-integrable. The LLC for odd unitary groups then implies that
\[
	\phi_\sigma=\phi\chi_V^{-1}\chi_W\oplus\chi_W
\] 
is square-integrable. Thus $\phi$ is also square-integrable.\\

\underline{CASE II:} If $\chi_V\subset\phi$, then by Lemma \ref{Lift.Down}, there exists $\epsilon'\in\{\pm1\}$, such that $\sigma\coloneqq\theta_{2n-1}^{\epsilon'}(\pi)\neq0$. Hence by \cite{MR3166215} Corollary C.3, $\sigma$ is also square-integrable. The LLC for odd unitary groups then implies that $\phi_\sigma$ is square-integrable. We claim that $\chi_W\not\subset\phi_\sigma$. Indeed, suppose on the contrary that $\chi_W\subset\phi_\sigma$, by Lemma \ref{Lift.Down} and the conservation relation, we must have
\[
	\theta_{2n-2}^\epsilon(\sigma)\neq0.
\] 
Again by \cite{MR3166215} Corollary C.3, $\pi=\theta_{2n}^\epsilon(\sigma)$ can not be square-integrable. This contradicts with our assumption. It follows that
\[
	\phi=\phi_\sigma\chi_W^{-1}\chi_V\oplus\chi_V
\]
is also square-integrable.\\

Now it remains to prove that if $\phi$ is square-integrable, then $\pi$ is square-integrable. Indeed the proof follows from the same idea of the proof of the first part. We omit the details here.
\end{proof}

\begin{proposition}
The map $\calL$ preserves temperedness.
\end{proposition}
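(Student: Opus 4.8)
The plan is to use the Langlands classification together with the already-established case of square-integrability. Let $\pi$ be an irreducible smooth representation of $U(W_{2n}^\epsilon)$ with $L$-parameter $\phi$. By Langlands' classification, $\pi$ is the unique irreducible quotient of a standard module $\Ind_P^{U(W_{2n}^\epsilon)}(\tau_1|\det|^{s_1}\boxtimes\cdots\boxtimes\tau_r|\det|^{s_r}\boxtimes\pi_0)$ with $s_1\geq\cdots\geq s_r>0$ and $\pi_0$ tempered, and by construction its parameter is $\phi=(\bigoplus_i\phi_{\tau_i}|\cdot|^{s_i})\oplus\phi_0\oplus((\bigoplus_i\phi_{\tau_i}|\cdot|^{s_i})^c)^\vee$, where $\phi_0=\calL(\pi_0)$. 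Now recall a parameter is tempered precisely when the image of $W_E$ is bounded, which for a direct sum means every summand is bounded; since the $|\cdot|^{s_i}$-twisted pieces with $s_i>0$ are never bounded, $\phi$ is tempered if and only if $r=0$, i.e. if and only if the standard module is $\pi_0$ itself, i.e. if and only if $\pi$ is tempered. This immediately gives: $\pi$ tempered $\iff$ $\phi$ tempered, which is exactly the assertion.

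More precisely, I would argue in two directions. First, if $\pi$ is tempered, then in its Langlands data we have $r=0$ and $\pi=\pi_0$, so $\phi=\phi_0$ is tempered by the very construction of $\calL$ on tempered representations (which lands in $\Parat(2n)$, as observed in Section \ref{LLC.Construction}). Conversely, if $\phi$ is tempered, then comparing with the displayed decomposition $\phi=(\bigoplus_i\phi_{\tau_i}|\cdot|^{s_i})\oplus\phi_0\oplus((\bigoplus_i\phi_{\tau_i}|\cdot|^{s_i})^c)^\vee$ and using that each $\phi_{\tau_i}|\cdot|^{s_i}$ with $s_i>0$ has unbounded image of $W_E$ forces $r=0$, hence $\pi=\pi_0$ is tempered. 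The uniqueness of the Langlands data (up to Weyl conjugacy) ensures there is no ambiguity in this comparison.

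There is essentially no serious obstacle here: the statement is formal once one has the construction of $\calL$ via Langlands quotients (established earlier in this section) and the elementary fact that temperedness of a parameter is equivalent to boundedness of each irreducible constituent's restriction to $W_E$. The only point requiring a word of care is that one must know the $L$-parameter attached to a tempered representation is genuinely a tempered parameter — but this is immediate from the construction, since $\calL_{\underline\psi}$ was defined to land in $\Parat(2n)$ (the images in CASE I and CASE II of the construction are visibly tempered, being obtained from tempered parameters of odd unitary groups by twisting and adding $\chi_V$). Alternatively, one could invoke Proposition \ref{PreserveLocalFactor.Fin}: since $\calL$ respects Plancherel measures and a tempered representation is characterized (among Langlands quotients) by the vanishing of the exponents, one can read off temperedness from the parameter — but the direct Langlands-data argument above is cleaner, so I would present that.
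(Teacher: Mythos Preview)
Your proposal is correct and is precisely the unpacking of the paper's one-line proof, which simply reads ``This automatically follows from our construction.'' You have made explicit exactly why the construction forces the equivalence: tempered $\pi$ are assigned tempered parameters by the direct construction of $\calL_{\underline\psi}$, while non-tempered $\pi$ are assigned, via the Langlands-quotient extension, parameters containing a visibly unbounded summand $\phi_{\tau_i}|\cdot|^{s_i}$ with $s_i>0$.
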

\begin{proof}
This automatically follows from our construction.
\end{proof}

\section{Preparations for the proof of Local intertwining relation}
Our next step is to prove that $\calL$ and $\calJ_{\scrW}$ satisfy the LIR. In this section, we first briefly recall the definition of normalized intertwining operators, following \cite{MR3573972} Section 7; and then recall a result in \cite{MR3573972}, which is the ingredient of our later proof. Fix $\varepsilon=\pm1$. In this section, we let $V$ and $W$ be an $\varepsilon$-Hermitian space and an $(-\varepsilon)$-Hermitian space respectively. Put
\begin{equation*}
m=\dim V\quad\textit{and}\quad n=\dim W.
\end{equation*}

\subsection{Parabolic subgroups}
Let $r$ be the Witt index of $V$ and $V_{an}$ an anisotropic kernel of $V$. Choose a basis $\{v_i,v_i^*~|~i=1,\cdots,r\}$ of the orthogonal complement of $V_{an}$ such that
\begin{equation*}
\langle v_i,v_j\rangle_V=\langle v_i^*,v_j^*\rangle_V=0,\quad\langle v_i,v_j^*\rangle_V=\delta_{i,j}
\end{equation*}
for $1\leq i,j\leq r$. Let $k$ be a positive integer with $k\leq r$ and set
\begin{equation*}
X=Ev_1\oplus\cdots\oplus Ev_k,\quad X^*=Ev_1^*\oplus\cdots\oplus Ev_k^*.
\end{equation*}
Let $V_0$ be the orthogonal complement of $X\oplus X^*$ in $V$, so that $V_0$ is a $\varepsilon$-Hermitian space of dimension $m_0=m-2k$ over $E$. We shall write an element in the unitary group $U(V)$ as a block matrix relative to the decomposition $V=X\oplus V_0\oplus X^*$. Let $P=M_P U_P$ be the maximal parabolic subgroup of $U(V)$ stabilizing $X$, where $M_P$ is the Levi component of $P$ stabilizing $X^*$ and $U_P$ is the unipotent radical of $P$. We have 
\begin{equation*}
\begin{aligned}
M_P&=\{m_P(a)\cdot h_0~|~a\in GL(X),h_0\in U(V_0)\},\\
U_P&=\{u_P(b)\cdot u_P(c)~|~b\in\Hom(V_0,X),c\in\Herm(X^*,X)\},
\end{aligned}
\end{equation*}

where
\begin{align*}
m_{P}(a)&=\left(\begin{array}{ccc}
                     a & ~ & ~ \\
                     ~ & 1_{V_{0}} & ~ \\ 
                     ~ & ~ & \left(a^{*}\right)^{-1}
                \end{array}\right),\\
u_{P}(b)&=\left(\begin{array}{ccc}
                     1_{X} & b & -\frac{1}{2} b b^{*}\\ 
                     ~ & 1_{V_{0}} & -b^{*} \\ 
                     ~ & ~ & 1_{X^{*}}
                \end{array}\right),\\
u_{P}(c)&=\left(\begin{array}{ccc}
                     {1_{X}} & {} & {c} \\ 
                     {} & {1_{V_{0}}} & {} \\ 
                     {} & {} & {1_{X^{*}}}
                \end{array}\right),
\end{align*}
and 
\begin{equation*}
\Herm(X^*,X)=\{c\in\Hom(X^*,X)~|~c^*=-c\}.
\end{equation*}
Here, the elements $a^*\in GL(X^*)$, $b^*\in\Hom(X^*,V_0)$, and $c^*\in\Hom(X^*,X)$ are the adjoints of $a$, $b$, and $c$ respectively. In particular, $M_P\simeq GL(X)\times U(V_0)$ and we have a exact sequence
\begin{equation*}
1\lra\Herm(X^*,X)\lra U_P \lra\Hom(V_0,X)\lra1.
\end{equation*}
Put
\begin{equation*}
\rho_P=\frac{m_0+k}{2},\quad w_P=\left(\begin{array}{ccc}
                                    {} & {} & -I_X\\
                                    {} & 1_{V_0} & {}\\
                                    -\varepsilon I_X^{-1} & {} & {}

                                 \end{array}\right),
\end{equation*}
where $I_X\in\Isom(X^*,X)$ is defined by $I_Xv_i^*=v_i$ for $1\leq i\leq k$.\\

Similarly, let $r'$ be the Witt index of $W$ and choose a basis $\{w_i,w_i^*~|~i=1,\cdots,r'\}$ of the orthogonal complement of an anisotropic kernel of $W$ such that
\begin{equation*}
\langle w_i,w_j\rangle_W=\langle w_i^*,w_j^*\rangle_W=0,\quad\langle w_i,w_j^*\rangle_W=\delta_{i,j}
\end{equation*}
for $1\leq i,j\leq r'$. We assume that $k\leq r'$ and set
\begin{equation*}
Y=Ew_1\oplus\cdots\oplus Ew_k,\quad Y^*=Ew_1^*\oplus\cdots\oplus Ew_k^*.
\end{equation*}
Let $W_0$ be the orthogonal complement of $Y\oplus Y^*$ in $W$, so that $W_0$ is a $(-\varepsilon)$-Hermitian space of dimension $n_0=n-2k$ over $E$. Let $Q=M_Q U_Q$ be the maximal parabolic subgroup of $U(W)$ stabilizing $Y$, where $M_Q$ is the Levi component of $Q$ stabilizing $Y^*$ and $U_Q$ is the unipotent radical of $Q$. For $a\in GL(Y)$, $b\in\Hom(W_0,Y)$ and $c\in\Herm(Y^*,Y)$, we define elements $m_Q(a)\in M_Q$ and $u_Q(b),u_Q(c)\in U_Q$ as above. We have $M_Q\simeq GL(Y)\times U(W_0)$ and
\begin{equation*}
1\lra\Herm(Y^*,Y)\lra U_Q \lra\Hom(W_0,Y)\lra 1.
\end{equation*}
Put
\begin{equation*}
\rho_Q=\frac{n_0+k}{2},\quad w_Q=\left(\begin{array}{ccc}
                                    {} & {} & -I_Y\\
                                    {} & 1_{W_0} & {}\\
                                    \varepsilon I_Y^{-1} & {} & {}

                                 \end{array}\right),
\end{equation*}
where $I_Y\in\Isom(Y^*,Y)$ is defined by $I_Yw_i^*=w_i$ for $1\leq i\leq k$.


\subsection{Intertwining operators}\label{IO}
To define the local intertwining operators, firstly we need to choose Haar measures on various groups. For this part, readers may refer to \cite{MR3573972}, Section 7.2. We follow their conventions on Haar measures. 
\\

Let $\tau$ be an irreducible (unitary) square-integrable representation of $GL(X)$ on a space $\scrV_\tau$ with central character $\omega_\tau$. For any $s\in\CC$, we realize the representation $\tau_s\coloneqq \tau\otimes|\det|^s$ on $\scrV_\tau$ by setting $\tau_s(a)v\coloneqq|\det a|^s\tau(a)v$ for $a\in GL(X)$ and $v\in\scrV_\tau$. Let $\sigma_0$ be an irreducible tempered representation of $U(V_0)$ on a space $\scrV_{\sigma_0}$. We consider the induced representation
\begin{equation*}
\Ind_P^{U(V)}(\tau_s\boxtimes\sigma_0)
\end{equation*}
of $U(V)$, which is realized on the space of smooth functions $\varPhi_s:U(V)\ra\scrV_\tau\otimes\scrV_{\sigma_0}$ such that
\begin{equation*}
\varPhi_s(um_P(a)h_0h)=|\det a|^{s+\rho_P}\tau(a)\sigma_0(h_0)\varPhi_s(h)
\end{equation*}
for all $u\in U_P$, $a\in GL(X)$, $h_0\in U(V_0)$, and $h\in U(V)$. Let $A_P$ be the split component of the center of $M_P$ and $W(M_P)=N_{U(V)}(A_P)/M_P$ be the relative Weyl group for $M_P$. Noting that $W(M_P)\simeq\ZZ/2\ZZ$, we denote by $w$ the non-trivial element in $W(M_P)$. For any representative $\wt{w}\in U(V)$ of $w$, we define an unnormalized intertwining operator
\begin{equation*}
\calM(\wt{w},\tau_s\boxtimes \sigma_0):\Ind_P^{U(V)}(\tau_s\boxtimes \sigma_0)\lra\Ind_P^{U(V)}(w(\tau_s\boxtimes \sigma_0))
\end{equation*}
by (the meromorphic continuation of) the integral
\begin{equation*}
\calM(\wt{w},\tau_s\boxtimes \sigma_0)\varPhi_s(h)=\int_{U_P}\varPhi_s(\wt{w}^{-1}uh)du,
\end{equation*}
where $w(\tau_s\boxtimes \sigma_0)$ is the representation of $M_P$ on $\scrV_\tau\otimes\scrV_{\sigma_0}$ given by
\begin{equation*}
(w(\tau_s\boxtimes \sigma_0))(m)=(\tau_s\boxtimes \sigma_0)(\wt{w}^{-1}m\wt{w})
\end{equation*}
for $m\in M_P$.\\

Next we shall normalize the intertwining operator $\calM(\wt{w},\tau_s\boxtimes \sigma_0)$, depending on the choice of a Whittaker datum. Having fixed the additive character $\psi$ and the trace zero element $\delta$, we define the sign $\epsilon(V)$ and use the Whittaker datum 
\[
	\begin{cases}
		\scrW_{\psi^{E}}\quad&\textit{if $\varepsilon=+1$, where }\psi^E=\psi(\frac{1}{2}\Tr_{E/F}(\delta\cdot~));\\
		\scrW_{\psi}\quad&\textit{if }\varepsilon=-1.
	\end{cases}
\]
 
Also, we need to choose the following data appropriately:
\begin{itemize}
\item a representative $\wt{w}$;
\item a normalizing factor $r(w,\tau_s\boxtimes\sigma_0)$;
\item an intertwining isomorphism $\calA_w$.
\end{itemize}
For the representative, we take $\wt{w}\in U(V)$ defined by
\begin{equation*}
\wt{w}=w_P\cdot m_P\left((-1)^{m'}\cdot\kappa_V\cdot J\right)\cdot(-1_{V_0})^{k},
\end{equation*}
where $w_P$ is as in the previous subsections, $m'=[\frac{m}{2}]$,
\begin{equation*}
\kappa_V=\begin{cases}
-\delta\quad&\textit{if }m\textit{ is even and $\varepsilon=+1$};\\
1\quad&\textit{if }m\textit{ is even and $\varepsilon=-1$};\\
-1\quad&\textit{if }m\textit{ is odd and $\varepsilon=+1$};\\
-\delta\quad&\textit{if }m\textit{ is odd and $\varepsilon=-1$},\\
\end{cases}
\end{equation*}
and 
\begin{equation*}
J=\left(\begin{array}{cccc}
                                    {} & {} & {} & (-1)^{k-1}\\
                                    {} & {} & \iddots & {}\\
                                    {} & -1 & {} & {}\\
                                    1 & {} & {} & {}

                                 \end{array}\right)\in GL_k(E).
\end{equation*}
Here, we have identified $GL(X)$ with $GL_k(E)$ using the basis ${v_1,\cdots, v_k}$. In \cite{MR3573972} Section 7.3, it is showed that the representative defined above coincides with the representative defined in \cite{MR3338302} when $\epsilon(V)=1$.\\

Next we define the normalizing factor $r(w,\tau_s\boxtimes\sigma_0)$. Let $\lambda(E/F,\psi)$ be the Langlands $\lambda$-factor and put
\begin{equation*}
\lambda(w,\psi)=\begin{cases}
    \lambda(E/F,\psi)^{(k-1)k/2} \quad&\textit{if }m \textit{ is even};\\
    \lambda(E/F,\psi)^{(k+1)k/2} \quad&\textit{if }m \textit{ is odd}.
\end{cases}
\end{equation*}
Let $\phi_\tau$ and $\phi_0$ be the $L$-parameters of $\tau$ and $\sigma_0$ respectively. We set
\begin{equation*}
r(w,\tau_s\boxtimes\sigma_0)=\lambda(w,\psi)\cdot\gamma(s,\phi_\tau\otimes\phi_0^\vee,\psi_E)^{-1}\cdot\gamma(2s,As^{(-1)^m}\circ\phi_\tau,\psi)^{-1},
\end{equation*}
and the normalized intertwining operator
\begin{equation*}
\calR(w,\tau_s\boxtimes\sigma_0)\coloneqq|\kappa_V|^{k\rho_P}\cdot r(w,\tau_s\boxtimes\sigma_0)^{-1}\cdot\calM(\wt w,\tau_s\boxtimes\sigma_0).
\end{equation*}
\begin{lemma}
The normalized intertwining operators satisfy the multiplicative property
\begin{equation*}
\calR(w,w(\tau_s\boxtimes\sigma_0))\circ\calR(w,\tau_s\boxtimes\sigma_0)=1,
\end{equation*}
as well as the adjoint property
\[
	\calR(w,w(\tau_s\boxtimes\sigma_0))^*=\calR(w,\tau_{-\bar{s}}\boxtimes\sigma_0).
\]
In particular, when $s$ is purely imaginary, $\calR(w,\tau_s\boxtimes\sigma_0)$ is unitary. Hence the normalized intertwining operator $\calR(w,\tau_s\boxtimes\sigma_0)$ is holomorphic at $s=0$.
\end{lemma}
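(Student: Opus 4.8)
The plan is to establish the three assertions in order: the multiplicative property, the adjoint property, and then deduce unitarity and holomorphy at $s=0$ as formal consequences. Throughout I would work with the unnormalized operator $\calM(\wt w,\tau_s\boxtimes\sigma_0)$ and track how the normalizing scalars $|\kappa_V|^{k\rho_P}$, $\lambda(w,\psi)$, and the $\gamma$-factor quotients behave under composition and under taking adjoints; these are the places where the precise choices made in the definition of $\calR$ pay off.

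First I would prove the multiplicative property. Since $w$ is the unique non-trivial element of $W(M_P)\simeq\ZZ/2\ZZ$, the composite $\calM(\wt w,w(\tau_s\boxtimes\sigma_0))\circ\calM(\wt w,\tau_s\boxtimes\sigma_0)$ is an intertwining operator from $\Ind_P^{U(V)}(\tau_s\boxtimes\sigma_0)$ to itself; by the standard Gindikin--Karpelevich/rank-one theory this composite is scalar, equal to the inverse of the Plancherel measure $\mu_\psi(\tau_s\boxtimes\sigma_0)$ up to the explicit volume/measure constants fixed in \cite{MR3573972} Section 7.2. The content is then a bookkeeping identity: the product of the two normalizing factors $r(w,\tau_s\boxtimes\sigma_0)\,r(w,w(\tau_s\boxtimes\sigma_0))$, together with the $|\kappa_V|^{k\rho_P}$ factors, exactly cancels this Plancherel measure. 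Concretely, $w(\tau_s\boxtimes\sigma_0)$ is (up to the action of $\wt w$) the representation $(\tau_s^c)^\vee\boxtimes\sigma_0$, i.e. $\tau_{-s}$-type data, so $r(w,w(\tau_s\boxtimes\sigma_0))$ involves $\gamma(-s,\phi_\tau^\vee\otimes\phi_0,\psi_E^{-1})^{-1}\gamma(-2s,As^{(-1)^m}\circ\phi_\tau^\vee,\psi^{-1})^{-1}$; multiplying by $r(w,\tau_s\boxtimes\sigma_0)$ and comparing with the product formula for $\mu_\psi(\tau_s\boxtimes\sigma_0)$ in property (4) of Theorem \ref{MainTheorem} (valid for $\sigma_0$ tempered by Corollary \ref{PreserveLocalFactor.2nd}), using that the $\lambda$-factors satisfy $\lambda(E/F,\psi)\lambda(E/F,\psi^{-1})=1$ and that $|\kappa_V|$ is a local unit power, gives the cancellation. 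This is essentially \cite{MR3573972} Section 7; I would cite their computation and indicate the one modification needed because our normalizing factor differs slightly from theirs (cf. Remark \ref{IO.Change.Whit.Data}).

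Next, the adjoint property. The unnormalized operator satisfies $\calM(\wt w,\tau_s\boxtimes\sigma_0)^* = \calM(\wt w^{-1}, \overline{w(\tau_s\boxtimes\sigma_0)})$ with respect to the natural pairings between induced representations, and since $\tau$ and $\sigma_0$ are unitary this is $\calM(\wt w,\tau_{-\bar s}\boxtimes\sigma_0)$ up to the usual identification (here one uses that $\wt w$ can be chosen so that $\wt w^{-1}$ represents $w$ as well, and that $\tau^* \cong \tau$, $\sigma_0^*\cong\sigma_0$ as the relevant data are unitary). Passing to the normalized operator, one must check that $\overline{r(w,\tau_s\boxtimes\sigma_0)} = r(w,\tau_{-\bar s}\boxtimes\sigma_0)$: this follows because $\gamma(s,\rho,\psi_E)$ and $\gamma(\bar s,\rho^\vee,\psi_E^{-1})$ are complex conjugates when $\rho$ is the parameter of a unitary representation, and $\overline{\lambda(E/F,\psi)} = \lambda(E/F,\psi)^{-1}=\lambda(E/F,\psi^{-1})$, while $|\kappa_V|^{k\rho_P}$ is real. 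Combining, $\calR(w,w(\tau_s\boxtimes\sigma_0))^* = \calR(w,\tau_{-\bar s}\boxtimes\sigma_0)$, as claimed.

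Finally, when $s=it$ is purely imaginary, $-\bar s = s$, so the adjoint property gives $\calR(w,w(\tau_s\boxtimes\sigma_0))^* = \calR(w,\tau_s\boxtimes\sigma_0)$; combined with the multiplicative property $\calR(w,w(\tau_s\boxtimes\sigma_0))\circ\calR(w,\tau_s\boxtimes\sigma_0)=1$ this shows $\calR(w,\tau_s\boxtimes\sigma_0)^*\calR(w,\tau_s\boxtimes\sigma_0)=1$ on the imaginary axis, i.e. $\calR(w,\tau_s\boxtimes\sigma_0)$ is unitary there. Holomorphy at $s=0$ then follows from a standard argument: $\calM(\wt w,\tau_s\boxtimes\sigma_0)$ has at worst a pole at $s=0$ whose order is controlled by the normalizing factor, and since the normalized operator is a uniformly bounded (unitary) family for $s$ on a punctured neighborhood of $0$ on the imaginary axis, it cannot have a pole at $s=0$; alternatively, holomorphy of $\calR$ at $s=0$ for $\sigma_0$ tempered is the content of \cite{MR3573972} (the normalizing factor was chosen precisely to cancel the pole of $\calM$ coming from $\gamma(s,\phi_\tau\otimes\phi_0^\vee,\psi_E)^{-1}$ and $\gamma(2s,As^{(-1)^m}\circ\phi_\tau,\psi)^{-1}$). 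I expect the main obstacle to be the bookkeeping in the first step --- matching the product of normalizing factors against the explicit Plancherel measure formula while correctly accounting for the volume constants in the Haar measure normalization and the discrepancy between our normalization and that of Mok/KMSW --- rather than anything structurally deep; the adjoint and unitarity/holomorphy statements are then essentially formal.
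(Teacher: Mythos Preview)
Your outline follows the same route as the paper, but there is a genuine gap in the multiplicative step. You assert that
\[
\calM(\wt w,w(\tau_s\boxtimes\sigma_0))\circ\calM(\wt w,\tau_s\boxtimes\sigma_0)
\]
is the inverse Plancherel measure ``up to explicit volume/measure constants''. This is not so: writing $\calM(\wt w,\cdot)=|\kappa_V|^{-k\rho_P}\,\ell(\wt w)\circ\calM_{\overline P|P}(\cdot)$, the composite picks up an extra factor $\ell(\wt w^2)$, which acts by $(\tau_s\boxtimes\sigma_0)(\wt w^2)$. Since $\wt w^2\in M_P$ is an explicit central element (depending on $\kappa_V$, $\varepsilon$, $k$), this contributes a genuine sign through $\omega_\tau$, not a measure constant. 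Relatedly, $r(w,w(\tau_s\boxtimes\sigma_0))$ is defined with the \emph{same} $\psi_E$ and $\psi$ as $r(w,\tau_s\boxtimes\sigma_0)$, not with $\psi_E^{-1}$ as you wrote; so the product $r(w,\tau_s)\,r(w,w(\tau_s))$ contains $\lambda(w,\psi)^2$ (which equals a power of $\omega_{E/F}(-1)$, not $1$), and when you compare with $\mu_\psi$ you are left with ratios of the shape
\[
\frac{\gamma(-s,\phi_\tau^\vee\otimes\phi_0,\psi_E)}{\gamma(-s,\phi_\tau^\vee\otimes\phi_0,\psi_E^{-1})}
=\det(\phi_\tau^\vee\otimes\phi_0)(-1),
\]
and similarly for the Asai factor. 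The actual content of the proof is that these three sign contributions --- from $\lambda(w,\psi)^{-2}$, from the $\psi$ versus $\psi^{-1}$ gamma ratios, and from $\omega_\tau(\wt w^2)$ --- cancel exactly. This is precisely where the specific choice of the representative $\wt w$ (the factor $m_P((-1)^{m'}\kappa_V J)\cdot(-1_{V_0})^k$) is used, and you have not addressed it.

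Your treatment of the adjoint property and the deduction of unitarity and holomorphy at $s=0$ is fine; the paper simply refers to Arthur for the adjoint statement and argues as you do for the rest.
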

\begin{proof}
An easy computation shows that
\begin{equation*}
	\calM(\wt w,\tau_s\boxtimes\sigma_0)=|\kappa_V|^{-k\rho_P}\ell(\wt w)\circ\calM_{\overline{P}|P}(\tau_s\boxtimes\sigma_0),
\end{equation*}
where
\[
	\ell(\wt w):\Ind_{\overline{P}}^{U(V)}\left(\tau_s\boxtimes\sigma_0\right)\lra\Ind_P^{U(V)}\left(w(\tau_s\boxtimes\sigma_0)\right)
\]
is defined by
\[
	\ell(\wt w)\varPsi_s(h)=\varPsi_s(\wt w^{-1}h)
\]
for $\varPsi\in\Ind_{\overline{P}}^{U(V)}\left(\tau_s\boxtimes\sigma_0\right)$. Here the factor $|\kappa_V|^{k\rho_P}$ arises because of our choices of the Haar measures on $U_P$ and $U_P\times U_{\overline{P}}$ in the definition of $\calM(\wt w,\tau_s\boxtimes\sigma_0)$ and $\calM_{\overline{P}|P}(\tau_s\boxtimes\sigma_0)$. Hence
\begin{align*}
\calR(w&,w(\tau_s\boxtimes\sigma_0))\circ\calR(w,\tau_s\boxtimes\sigma_0)\\
=&r(w,\tau_s\boxtimes\sigma_0)^{-1}\cdot r(w,w(\tau_s\boxtimes\sigma_0))^{-1}\cdot\ell(\wt w^2)\circ\calM_{P|\overline{P}}(\tau_s\boxtimes\sigma_0)\circ\calM_{\overline{P}|P}(\tau_s\boxtimes\sigma_0)\\
=&\lambda(w,\psi)^{-2}\cdot\frac{\gamma\left(s,\phi_\tau\otimes\phi_0^\vee,\psi_E\right)\cdot\gamma\left(2s,As^{(-1)^m}\circ\phi_\tau,\psi\right)}{\gamma\left(s,\phi_\tau\otimes\phi_0^\vee,\psi_E\right)\cdot\gamma\left(2s,As^{(-1)^m}\circ\phi_\tau,\psi\right)}\\
&\quad\quad\quad\times\frac{\gamma\left(-s,\phi_\tau^\vee\otimes\phi_0,\psi_E\right)\cdot\gamma\left(-2s,As^{(-1)^m}\circ\phi_\tau^\vee,\psi\right)}{\gamma\left(-s,\phi_\tau^\vee\otimes\phi_0,\psi_E^{-1}\right)\cdot\gamma\left(-2s,As^{(-1)^m}\circ\phi_\tau^\vee,\psi^{-1}\right)}\cdot\ell(\wt w^2)\\
=&\lambda(w,\psi)^{-2}\cdot\det(\phi_\tau^\vee\otimes\phi_0)(-1)\cdot\det(As^{(-1)^m}\circ\phi_\tau^\vee)(-1)\cdot(\tau_s\boxtimes\sigma_0)(\wt w^2)\\
=&\lambda(w,\psi)^{-2}\cdot\omega_\tau(-1)^m\omega_{E/F}(-1)^{(m-1)mk}\cdot\omega_\tau(-1)^k\omega_{E/F}(-1)^{\dim R\phi_\tau}\cdot\omega_\tau\left(\varepsilon\cdot\kappa_V\left(\kappa_V^c\right)^{-1}\cdot(-1)^{k-1}\right),
\end{align*}
where
\[
	R=\begin{cases}
		\bigwedge^2\quad&\textit{if $m$ is even};\\
		Sym^2\quad&\textit{if $m$ is odd}.
	\end{cases}
\]
It follows that
\[
	\calR(w,w(\tau_s\boxtimes\sigma_0))\circ\calR(w,\tau_s\boxtimes\sigma_0)=1
\]
as desired. The adjoint property can be proved exactly the same as \cite{MR3135650} Proposition 2.3.1.
\end{proof}

Finally we define the intertwining isomorphism. Assume that $w(\tau\boxtimes\sigma_0)\simeq\tau\boxtimes\sigma_0$, which is equivalent to $(\tau^c)^\vee\simeq\tau$. We may take the unique isomorphism
\begin{equation*}
\calA_w:\scrV_\tau\otimes\scrV_{\sigma_0}\lra\scrV_\tau\otimes\scrV_{\sigma_0}
\end{equation*}
such that:
\begin{itemize}
\item $\calA_w\circ (w(\tau\boxtimes\sigma_0))(m)=(\tau\boxtimes\sigma_0)(m)\circ\calA_w$ for all $m\in M_P$;
\item $\calA_w=\calA_w'\otimes1_{\scrV_{\sigma_0}}$ with an isomorphism
    \begin{equation*}
        \calA_w':\scrV_\tau\lra\scrV_\tau
    \end{equation*}
    such that $\Lambda\circ\calA_w'=\Lambda$. Here, $\Lambda:\scrV_\tau\ra\CC$ is the unique (up to a scalar) Whittaker functional with respect to the Whittaker datum $(N_k,\psi_{N_k})$, where $N_k$ is the group of unipotent upper triangular matrices in $GL_k(E)$ and $\psi_{N_k}$ is the generic character of $N_k$ given by $\psi_{N_k}(x)=\psi_E(x_{1,2}+\cdots+x_{k-1,k})$.
\end{itemize}

Note that $\calA_w^2=1_{\scrV_\tau\otimes\scrV_{\sigma_0}}$. We define a self-intertwining operator
\begin{equation*}
R(w,\tau\boxtimes\sigma_0):\Ind_P^{U(V)}(\tau\boxtimes \sigma_0)\lra\Ind_P^{U(V)}(\tau\boxtimes \sigma_0)
\end{equation*}
by
\begin{equation*}
R(w,\tau\boxtimes\sigma_0)\varPhi(h)=\calA_w(\calR(w,\tau\boxtimes\sigma_0)\varPhi(h)).
\end{equation*}
for $\varPhi\in\Ind_P^{U(V)}(\tau\boxtimes \sigma_0)$, and $h\in U(V)$. By construction, 
\begin{equation*}
R(w,\tau\boxtimes\sigma_0)^2=1.
\end{equation*}
We shall also use the notation $R(w,\tau\boxtimes\sigma_0,\psi)$ if we want to emphasize the dependence of $R(w,\tau\boxtimes\sigma_0)$ on the additive character $\psi$.
\begin{remark}\label{IO.Change.Whit.Data}
\begin{enumerate}
\item The normalizing factor we defined here is the same as in \cite{MR3573972} Section 7. It is not exactly the same as the normalizing factor defined in \cite{MR3338302} or \cite{kaletha2014endoscopic}; but they have the same analytic behavior near $s=0$. So the final self-intertwining operator $R(w,\tau\boxtimes\sigma_0)$ we defined here coincides with Mok's when $U(V)$ is quasi-split.
\item In the definition of the self-intertwining operator $R(w,\tau\boxtimes\sigma_0)$, if we replace the additive character $\psi$ by $\psi_a$, where $a\in F^\times$, then it follows from an easy computation that
\[
	R(w,\tau\boxtimes\sigma_0,\psi_a)=\begin{cases}
		R(w,\tau\boxtimes\sigma_0,\psi)\cdot\omega_{\tau}(a)\quad&\textit{if $m$ is even};\\
		R(w,\tau\boxtimes\sigma_0,\psi)\quad&\textit{if $m$ is odd}.
	\end{cases}
\]
In particular, the self-intertwining operator $R(w,\tau\boxtimes\sigma_0)$ only depends on the choice of the Whittaker datum.
\end{enumerate}
\end{remark}

Similarly, we can define the normalized intertwining operator for $U(W)$. We put 
\begin{equation*}
\wt{w}=w_Q\cdot m_Q\left((-1)^{n'}\cdot\kappa_W\cdot J\right)\cdot(-1_{W_0})^{k},
\end{equation*}
where $w_Q$ is as in the previous subsection, and $n'=[\frac{n}{2}]$. Let $\pi_0$ be an irreducible tempered representation of $U(W_0)$. We denote the $L$-parameters of $\tau$ and $\pi_0$ by $\phi_\tau$ and $\phi_0$ respectively. We set
\begin{equation*}
r(w,\tau_s\boxtimes\pi_0)=\lambda(w,\psi)\cdot\gamma(s,\phi_\tau\otimes\phi_0^\vee,\psi_E)^{-1}\cdot\gamma(2s,As^{(-1)^n}\circ\phi_\tau,\psi)^{-1},
\end{equation*}
and the normalized intertwining operator
\begin{equation*}
\calR(w,\tau_s\boxtimes\pi_0)\coloneqq|\kappa_W|^{k\rho_Q}\cdot r(w,\tau_s\boxtimes\pi_0)^{-1}\cdot\calM(\wt w,\tau_s\boxtimes\pi_0).
\end{equation*}
Assume that $w(\tau\boxtimes\pi_0)\simeq\tau\boxtimes\pi_0$, we take an isomorphism $\calA_w$ similarly, and define the self-intertwining operator $R(w,\tau\boxtimes\pi_0)$ by 
\[
	R(w,\tau\boxtimes\pi_0)\varPhi(g)=\calA_w\left(\calR(w,\tau_s\boxtimes\pi_0)\varPhi(g)\right)
\]
for $\varPhi\in\Ind_Q^{U(W)}\left(\tau\boxtimes\pi_0\right)$, and $g\in U(W)$. We have
\begin{equation*}
R(w,\tau\boxtimes\pi_0)^2=1.
\end{equation*}

\subsection{An equivariant map}
In \cite{MR3573972} Section 8, Gan-Ichino constructed an equivariant map. We will apply this map to do some computations in later sections. Now we briefly recall some related results.\\

Let $\tau$ be an irreducible square-integrable representation of $GL_k(E)$, $\pi_0$ be an irreducible tempered representation of $U(W_0)$, and $\sigma_0=\theta_{\underline{\psi},V_0,W_0}(\pi_0)$ be the theta lift of $\pi_0$ to $U(V_0)$.
\begin{proposition}\label{equimap}
\begin{enumerate}
\item There is a family of $U(V)\times U(W)$-equivariant maps
\begin{equation*}
\calT_s:\omega\otimes\Ind_P^{U(V)}(\tau_s^c\chi_W^c\boxtimes\sigma_0^\vee)\lra\Ind_Q^{U(W)}(\tau_s\chi_V\boxtimes\pi_0)
\end{equation*}
parametrized by $s\in\CC$. This family of maps $\calT_s$ is holomorphic in $s$.
\item Assume that $m\geq n$. Let $\varPhi\in\Ind_P^{U(V)}(\tau^c\chi_W^c\boxtimes\sigma_0^\vee)$. If $\varPhi\neq0$, then there exists $\varphi\in\scrS$ such that
\begin{equation*}
\calT_0(\varphi\otimes\varPhi)\neq 0.
\end{equation*}
\end{enumerate}
\end{proposition}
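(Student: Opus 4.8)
The plan is to follow the construction of Gan--Ichino \cite{MR3573972}: realize $\calT_s$ inside the explicit mixed Schr\"odinger model of the Weil representation, using the defining property of the theta lift on the small pair to convert $\sigma_0^\vee$ into $\pi_0$ and a ``$GL$-theta'' to convert $\tau_s^c\chi_W^c$ into $\tau_s\chi_V$. Concretely, first I would realize $\omega=\omega_{\underline{\psi},V,W}$ on the mixed model attached to the isotropic subspace $Y\subset W$, so that $\omega$ sits on $\scrS(\Hom(Y,V))\otimes(\text{space of }\omega_{\underline{\psi},V,W_0})$; after further decomposing the $\omega_{\underline{\psi},V,W_0}$-part along $V=X\oplus V_0\oplus X^*$ and splitting $\Hom(Y,V)=\Hom(Y,X)\oplus\Hom(Y,V_0)\oplus\Hom(Y,X^*)$, all of the actions in sight --- of $GL(X)$, $GL(Y)$, $U(V_0)$, $U(W_0)$, $U_P$, $U_Q$ and the splitting characters $\chi_V,\chi_W$ --- become explicit, the $GL(X)\times GL(Y)$-part carrying the identity correspondence up to the twists $\chi_V,\chi_W$ and $|\det|^s$, and an $\omega_{\underline{\psi},V_0,W_0}$-part carrying theta for $(V_0,W_0)$. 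Given $\varphi\in\scrS$ and $\varPhi_s\in\Ind_P^{U(V)}(\tau_s^c\chi_W^c\boxtimes\sigma_0^\vee)$, I would pair the $\omega_{\underline{\psi},V_0,W_0}$-factor of $\varphi$ against the $\sigma_0^\vee$-value of $\varPhi_s$ through a fixed nonzero element of $\Hom_{U(V_0)\times U(W_0)}(\omega_{\underline{\psi},V_0,W_0},\sigma_0\boxtimes\pi_0)$ (which exists by the definition of $\sigma_0$), then integrate the leftover $U(V)$-variable over $U_P$; the $GL(X)$-equivariance of $\varPhi_s$ together with the $GL(X)\times GL(Y)$-action on $\scrS(\Hom(Y,X))$ turns $\tau_s^c\chi_W^c$ into $\tau_s\chi_V$ and produces the $\rho_Q$-shift. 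One then checks directly that the result is a smooth function on $U(W)$ lying in $\Ind_Q^{U(W)}(\tau_s\chi_V\boxtimes\pi_0)$ and that $\calT_s$ is $U(V)\times U(W)$-equivariant. For holomorphy: in this model the only $s$-dependence of the integrand is the scalar $|\det a|^{s}$ from $\varPhi_s$, and for fixed $\varphi,\varPhi_s$ the integration runs effectively over a compact set (because $\varphi$ is Schwartz and $\varPhi_s$ is determined by its restriction to a compact set), so the integral converges for every $s\in\CC$ and $s\mapsto\calT_s$ is entire; no meromorphic continuation is needed.

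For part (2), fix $0\neq\varPhi\in\Ind_P^{U(V)}(\tau^c\chi_W^c\boxtimes\sigma_0^\vee)$ and assume $m\geq n$. The plan is to evaluate $\calT_0(\varphi\otimes\varPhi)$ at a well-chosen point $g_0\in U(W)$ and choose $\varphi$ making this value nonzero. Unwinding the construction, $\calT_0(\varphi\otimes\varPhi)(g_0)$ is a $U_P$-integral of the pairing $\langle\,\cdot\,,\varPhi(\,\cdot\,)\rangle_{\sigma_0}$ against matrix coefficients of $\pi_0$ and against the partial Fourier transform of the $\scrS(\Hom(Y,V))$-component of $\varphi$. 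I would choose $g_0$ so that $\varPhi$ is evaluated near a point $h_0$ in the big Bruhat cell at which $\varPhi(h_0)\neq0$, take the $\Hom(Y,X)$- and $\Hom(Y,X^*)$-components of $\varphi$ to be Schwartz functions concentrated near an injective map $Y\hookrightarrow V$, and take the $\omega_{\underline{\psi},V_0,W_0}$-component of $\varphi$ so that its image under the surjection $\omega_{\underline{\psi},V_0,W_0}\twoheadrightarrow\sigma_0\boxtimes\pi_0$ pairs nontrivially with $\varPhi(h_0)$; the integral then collapses to a nonzero multiple of a product of these two non-vanishing quantities. The hypothesis $m\geq n$ is precisely what makes this go through: it guarantees that the partial-Fourier-transform-and-restriction map underlying $\calT_0$ is surjective in the relevant variable (one is restricting a Schwartz function on the larger space governed by $V$ down to the smaller one governed by $W$, not the reverse), equivalently that, since $\tau$ is a discrete series, only the leading term of Kudla's filtration of the Jacquet module $R_Q(\omega)$ contributes and it does so nondegenerately, so that $\calT_0$ does not factor through a proper submodule in the $\varPhi$-variable and non-vanishing of $\varPhi$ alone forces $\calT_0(\varphi\otimes\varPhi)\neq0$ for a suitable $\varphi$.

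The hard part will be part (2): bookkeeping the mixed model and Kudla's filtration of $R_Q(\omega)$ carefully enough to confirm that when $m\geq n$ and $\tau$ is discrete series the relevant piece of the Jacquet module is exactly the one through which $\calT_0$ factors, and that the associated Fourier--restriction map is onto. Once that is in place, part (1) is essentially bookkeeping with the explicit Weil-representation formulas and the defining surjection for $\sigma_0$. Full details are in \cite{MR3573972}, Section 8; cf.\ also \cite{MR4308934}.
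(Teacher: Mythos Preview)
Your proposal is correct and takes essentially the same approach as the paper: the paper's proof is simply a citation to \cite{MR3573972} Lemma 8.1 and Lemma 8.3, and what you have sketched is precisely the Gan--Ichino construction carried out there (mixed model for $\omega$, pairing via a fixed element of $\Hom_{U(V_0)\times U(W_0)}(\omega_{\underline{\psi},V_0,W_0},\sigma_0\boxtimes\pi_0)$, and the nonvanishing argument for $m\geq n$). Your outline is more detailed than the paper's own proof, but the route is identical.
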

\begin{proof}
See \cite{MR3573972} Lemma 8.1 and Lemma 8.3.
\end{proof}

Let $\phi_\tau$, $\phi_0$, and $\phi_0'$ be the $L$-parameters of $\tau$, $\pi_0$, and $\sigma_0$ respectively. We denote by $\wt w'$ and $\wt w$ the representatives of the non-trivial element in $W(M_P)$ and $W(M_Q)$ respectively, as described in the previous subsection.
\begin{proposition}\label{comp.IO}
The diagram
\begin{equation*}
\begin{CD}
\omega\otimes\Ind_P^{U(V)}\left(\tau_s^c\chi_W^c\boxtimes\sigma_0^\vee\right) @>{\calT_s}>> \Ind_Q^{U(W)}\left(\tau_s\chi_V\boxtimes\pi_0\right)\\
@V{1\otimes\calR(\wt{w}',s)}VV @VV{\calR(\wt{w},s)}V\\
\omega\otimes\Ind_P^{U(V)}\left(w'(\tau_s^c\chi_W^c\boxtimes\sigma_0^\vee)\right) @>{\calT_{-s}}>> \Ind_Q^{U(W)}\left(w(\tau_s\chi_V\boxtimes\pi_0)\right)
\end{CD}
\end{equation*}
commutes up to a scalar. Indeed, for $\varphi\in\scrS$ and $\varPhi_s\in\Ind_P^{U(V)}(\tau_s^c\chi_W^c\boxtimes\sigma_0^\vee)$, we have
\begin{equation*}
\calR(\wt{w},\tau_s\chi_V\boxtimes\pi_0)\calT_s(\varphi\otimes\varPhi_s)=\alpha\cdot\beta(s)\cdot\calT_{-s}(\varphi\otimes\calR(\wt{w}',\tau_s^c\chi_W^c\boxtimes\sigma_0^\vee)\varPhi_s),
\end{equation*}
where
\begin{align*}
\alpha=&\left[\gamma_V^{-1}\cdot\gamma_W\cdot\chi_V\left((-1)^{n'}\cdot\varepsilon\cdot\kappa_W^{-1}\right)\cdot\chi_W\left((-1)^{m'-1}\cdot\kappa_V^{-1}\right)\cdot(\chi_V^{-n}\chi_W^m)(\delta)\right]^k\\
&\times\omega_\tau\left((-1)^{m'+n'-1}\cdot\kappa_V^c\kappa_W^{-1}\right)\cdot\lambda(w',\psi)\cdot\lambda(w,\psi)^{-1}
\end{align*}
and
\begin{align*}
\beta(s)=&L\left(s-s_0+\frac{1}{2},\phi_\tau\right)^{-1}\cdot L\left(-s-s_0+\frac{1}{2},(\phi_\tau^c)^\vee\right)\cdot\gamma\left(-s-s_0+\frac{1}{2},(\phi_\tau^c)^\vee,\psi_E\right)\\
&\times|\kappa_V\kappa_W^{-1}|^{-ks}\cdot\gamma\left(s,\phi_\tau^c\otimes\phi_0'\otimes\chi_W^c,\psi_E\right)^{-1}\cdot\gamma\left(s,\phi_\tau\otimes\phi_0^\vee\otimes\chi_V,\psi_E\right).
\end{align*}
\end{proposition}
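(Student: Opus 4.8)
The strategy is to pass to the \emph{unnormalized} intertwining operators, do the integral manipulation there, and then put the normalizing factors back. Recall that
\[
\calR(\wt w,\tau_s\chi_V\boxtimes\pi_0)=|\kappa_W|^{k\rho_Q}\cdot r(w,\tau_s\chi_V\boxtimes\pi_0)^{-1}\cdot\calM(\wt w,\tau_s\chi_V\boxtimes\pi_0),
\]
and similarly for $\calR(\wt w',\tau_s^c\chi_W^c\boxtimes\sigma_0^\vee)$ on the $U(V)$-side. Thus the asserted identity is equivalent to one of the shape
\[
\calM(\wt w,\tau_s\chi_V\boxtimes\pi_0)\,\calT_s(\varphi\otimes\varPhi_s)=c(s)\cdot\calT_{-s}\bigl(\varphi\otimes\calM(\wt w',\tau_s^c\chi_W^c\boxtimes\sigma_0^\vee)\varPhi_s\bigr)
\]
for a suitable meromorphic $c(s)$, with
\[
\alpha\cdot\beta(s)=c(s)\cdot\frac{|\kappa_W|^{k\rho_Q}\,r(w,\tau_s\chi_V\boxtimes\pi_0)^{-1}}{|\kappa_V|^{k\rho_P}\,r(w',\tau_s^c\chi_W^c\boxtimes\sigma_0^\vee)^{-1}}.
\]
Since the normalizing factors $r(\cdot)$ are explicit products of $\lambda$- and $\gamma$-factors, once $c(s)$ is in hand the reduction to $\alpha\cdot\beta(s)$ is pure bookkeeping; here one invokes Lemma \ref{Compatible.Parameter} to relate the $L$-parameter $\phi_0'$ of $\sigma_0$ to $\phi_0$, so as to match the $\gamma$-factor of the ``$GL_k$-part'' inside $r(w,\cdot)$ with the one appearing in $\beta(s)$.

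To compute $c(s)$ one unwinds the definition of $\calM(\wt w,\cdot)$ (an integral over $U_Q$) together with the construction of $\calT_s$ underlying Proposition \ref{equimap}, which in the mixed/Schrödinger model of $\omega$ adapted to $P$ and $Q$ is itself an integral against the Weil representation. Substituting one into the other, interchanging the order of integration (legitimate in a right half-plane, then continued meromorphically), and changing variables, the resulting double integral factors into two pieces: (i) an integral over $U_P$ that reassembles $\calM(\wt w',\cdot)\varPhi_s$ inside $\calT_{-s}$, producing the right-hand side of the displayed identity; and (ii) a leftover integral over a $GL_k(E)$-type subgroup. Piece (ii) is a local zeta integral of Tate/Godement--Jacquet type attached to $\tau$ twisted by $\chi_V$ (with the $\Res_{E/F}$/Asai structure built in); evaluating it by the relevant local functional equation yields exactly the combination $L(s-s_0+\frac{1}{2},\phi_\tau)^{-1}\cdot L(-s-s_0+\frac{1}{2},(\phi_\tau^c)^\vee)\cdot\gamma(-s-s_0+\frac{1}{2},(\phi_\tau^c)^\vee,\psi_E)$, the ratio $\gamma(s,\phi_\tau^c\otimes\phi_0'\otimes\chi_W^c,\psi_E)^{-1}\cdot\gamma(s,\phi_\tau\otimes\phi_0^\vee\otimes\chi_V,\psi_E)$, and the power $|\kappa_V\kappa_W^{-1}|^{-ks}$ that make up $\beta(s)$. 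The constant $\alpha$ collects the $s$-independent contributions: the Weil indices $\gamma_V,\gamma_W$ and the $\lambda$-factors $\lambda(w',\psi),\lambda(w,\psi)$ coming from the partial Fourier transform in the Weil model and from comparing $\wt w$ with $\wt w'$, together with the character values $\chi_V((-1)^{n'}\varepsilon\kappa_W^{-1})$, $\chi_W((-1)^{m'-1}\kappa_V^{-1})$, $(\chi_V^{-n}\chi_W^m)(\delta)$ and $\omega_\tau((-1)^{m'+n'-1}\kappa_V^c\kappa_W^{-1})$ arising from the explicit metaplectic cocycle and the action of $\wt w,\wt w'$ on $\omega$ and on the $GL_k$-factor.

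One could instead obtain the weaker ``commutes up to a scalar'' statement softly: by Proposition \ref{equimap}(1) the space $\Hom_{U(V)\times U(W)}\bigl(\omega\otimes\Ind_P^{U(V)}(\tau_s^c\chi_W^c\boxtimes\sigma_0^\vee),\Ind_Q^{U(W)}(w(\tau_s\chi_V\boxtimes\pi_0))\bigr)$ is nonzero, and an analysis of Kudla's filtration of $R_Q(\omega)$ (as in the proof of Lemma \ref{theta.ind}) shows it is at most one-dimensional for generic $s$, so the two composites in the diagram differ by a meromorphic scalar. But pinning that scalar down as $\alpha\cdot\beta(s)$ forces one back to the computation above, so I would run the direct calculation from the outset. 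The main obstacle is precisely the determination of $c(s)$: both the identification of piece (ii) with the stated $L$- and $\gamma$-factor combination (which hinges on the exact normalization of $\calT_s$ from Proposition \ref{equimap}) and the accurate tracking of the eighth-roots of unity and character values constituting $\alpha$ are delicate and error-prone, and that is where essentially all the work lies.
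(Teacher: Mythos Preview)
The paper does not prove this proposition; it simply cites \cite{MR3573972} Corollary 8.5. Your proposal is a reasonable outline of how that computation actually goes in Gan--Ichino: reduce to the unnormalized operators, unwind $\calT_s$ in the mixed model, factor the resulting double integral into the $U_P$-integral that rebuilds $\calM(\wt w',\cdot)$ and a Godement--Jacquet/Tate-type zeta integral producing the $L$- and $\gamma$-factor combination in $\beta(s)$, and then bookkeep the $s$-independent constants into $\alpha$. So strategically you are aligned with the source; the paper just outsources the labor.

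One correction: your appeal to Lemma \ref{Compatible.Parameter} is unnecessary and slightly misplaced. The statement of the proposition deliberately keeps $\phi_0$ (the parameter of $\pi_0$) and $\phi_0'$ (the parameter of $\sigma_0$) separate --- both appear explicitly in $\beta(s)$ via $\gamma(s,\phi_\tau^c\otimes\phi_0'\otimes\chi_W^c,\psi_E)^{-1}$ and $\gamma(s,\phi_\tau\otimes\phi_0^\vee\otimes\chi_V,\psi_E)$. These two $\gamma$-factors come straight out of the two normalizing factors $r(w',\tau_s^c\chi_W^c\boxtimes\sigma_0^\vee)$ and $r(w,\tau_s\chi_V\boxtimes\pi_0)$ respectively, with no need to relate $\phi_0'$ to $\phi_0$. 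That relation is only invoked \emph{later}, in Section 6, when the paper applies this proposition and simplifies $\alpha\cdot\beta(0)$. Keeping the proposition free of Lemma \ref{Compatible.Parameter} is also what makes it safe to import wholesale from \cite{MR3573972}, where no LLC for the non-quasi-split side is assumed.
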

\begin{proof}
See \cite{MR3573972} Corollary 8.5.
\end{proof}

\section{Local intertwining relation}
In this section, we prove the LLC we constructed for even unitary groups (i.e. $\calL$ and $\calJ_\scrW$) satisfy the LIR. We retain notations in Section \ref{LLC.Construction.J}.\\

Assume that $\phi\in\Parat(2n)$ is a tempered $L$-parameter, such that
\begin{equation*}
\phi=\phi_\tau\oplus\phi_0\oplus(\phi_\tau^c)^\vee,
\end{equation*}
where $\phi_\tau$ is an irreducible tempered representation of $WD_E$ which corresponds to an irreducible (unitary) discrete series representation $\tau$ of $GL_k(E)$, and $\phi_0\in\Parat(2n_0)$, where $n_0=n-k$. So there is a natural embedding $\calS_{\phi_0}\hookrightarrow\calS_\phi$. Let $\pi_0=\pi(\phi_0,\eta_0)$ be an irreducible tempered representation of $U(W_{2n_0}^\epsilon)$. We can write
\begin{equation*}
W_{2n}^\epsilon=Y\oplus W_{2n_0}^\epsilon\oplus Y^*,
\end{equation*}
where $Y$ and $Y^*$ are $k$-dimensional totally isotropic subspaces of $W_{2n}^\epsilon$ such that $Y\oplus Y^*$ is non-degenerate and orthogonal to $W_{2n_0}^\epsilon$. Let $Q$ be the maximal parabolic subgroup of $U(W_{2n}^\epsilon)$ stabilizing $Y$, and $L$ be the Levi component of $Q$ stabilizing $Y^*$, so that
\begin{equation*}
L\simeq GL(Y)\times U(W_{2n_0}^\epsilon).
\end{equation*}
Our goal is to completely analyze the induced representation $\Ind_Q^{U(W_{2n}^\epsilon)}(\tau\boxtimes\pi_0)$.\\

We divide our proof into three part. In the first part, we analyze the $L$-parameter for each irreducible constituent $\pi$ of $\Ind_Q^{U(W_{2n}^\epsilon)}(\tau\boxtimes\pi_0)$; and as a corollary, we get some information on the reducibility of $\Ind_Q^{U(W_{2n}^\epsilon)}(\tau\boxtimes\pi_0)$. In the second part, we analyze the action of the normalized local intertwining operator $R(w,\tau\boxtimes\pi_0)$ on $\Ind_Q^{U(W_{2n}^\epsilon)}(\tau\boxtimes\pi_0)$. In the last part, we relate the character $\eta=\calJ_\scrW(\pi)$ with $\eta_0$.

\subsection{$L$-parameters and reducibilities}
We first prove that
\begin{proposition}\label{LIR.Parameter}
Let $\pi$ be an irreducible subrepresentation of $\Ind_Q^{U(W_{2n}^\epsilon)}(\tau\boxtimes\pi_0)$. Then the $L$-parameter of $\pi$ is $\phi$.
\end{proposition}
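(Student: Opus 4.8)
The plan is to compare the induced representation on the $U(W_{2n}^\epsilon)$-side with an induced representation on the odd unitary side via the theta correspondence, and then invoke the already-established LLC for odd unitary groups together with the compatibility of theta lifts with parabolic induction (Lemma \ref{theta.ind}). First I would let $\pi$ be an irreducible subrepresentation of $\Ind_Q^{U(W_{2n}^\epsilon)}(\tau\boxtimes\pi_0)$ and consider its theta lift. Since $\pi$ is tempered, exactly one of $\theta_{2n+1}^{+}(\pi)$, $\theta_{2n-1}^{-}(\pi)$ is non-zero by the conservation relation, and the non-zero one, call it $\sigma$, is again tempered by \cite{MR3166215} Proposition C.4. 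The idea is that $\sigma$ sits inside an induced representation $\Ind_P^{U(V)}(\tau\chi_W\boxtimes\theta(\pi_0))$ of the appropriate odd unitary group, so its $L$-parameter is read off from that of $\theta(\pi_0)$, which in turn is controlled by $\phi_0$ via Lemma \ref{Compatible.Parameter}; finally one applies the recipe defining $\calL$ (the two cases in Section \ref{LLC.Construction}) to recover $\calL(\pi)$.

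More precisely, I would argue as follows. First, by Lemma \ref{Compatible.Parameter}, $\phi_{\pi_0}$ determines $\phi_{\sigma_0}$ where $\sigma_0=\theta(\pi_0)$ is the theta lift of $\pi_0$ to the relevant odd unitary group (either $U(V_{2n_0+1}^{+})$ or $U(V_{2n_0-1}^{-})$, depending on whether $\chi_V\subset\phi_0$). Next, using Lemma \ref{theta.ind} (with $W=W_{2n}^\epsilon$ in the role of the ``$W$'' there), the non-vanishing of $\theta(\pi)$ forces $\theta(\pi)$ to be a constituent of $\Ind_P^{U(V)}(\tau\chi_W\boxtimes\sigma_0)$; since both sides are tempered, by Mok/KMSW's LIR (taken as known for odd unitary groups via Theorem \ref{TheoremOdd}) every constituent of this induced representation has $L$-parameter $\phi_\tau\chi_W\oplus\phi_{\sigma_0}\oplus(\phi_\tau^c\chi_W)^\vee$. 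Hence $\phi_\sigma$ equals this parameter. Finally, unwinding the definition of $\calL_{\underline\psi}$ in the two cases and using that $\phi=\phi_\tau\oplus\phi_0\oplus(\phi_\tau^c)^\vee$, a direct bookkeeping of the twists by $\chi_V$, $\chi_W$ shows $\calL(\pi)=\phi$, as desired.

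The main obstacle I anticipate is the careful case analysis and the bookkeeping of which tower and which sign of odd unitary group the theta lift lands in, and correspondingly keeping track of the $\chi_V$-, $\chi_W$-twists and the extra summands $\chi_V$ or $\chi_W$ appearing in the definition of $\calL$. In particular, one must check that the relation between $\chi_V\subset\phi$ and $\chi_V\subset\phi_0$ is exactly what is needed for the cases to match up (using that $\chi_V\not\subset\phi_\tau$ and $\chi_V\not\subset(\phi_\tau^c)^\vee$ because $\tau$ is a discrete series of $GL_k$ with $k\geq 1$, hence $\phi_\tau$ is an irreducible non-character summand — or at worst needs separate handling when $k=1$ and $\phi_\tau$ could itself be $\chi_V$). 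A secondary technical point is to make sure Lemma \ref{theta.ind} applies in the form needed (i.e.\ with the hypothesis $l=\dim W-\dim V=-1$), which is exactly the almost-equal-rank situation relevant here; when instead the lift goes down to $U(V_{2n-1}^-)$ one has $l=+1$ and should use the analogous statement (or dualize via the conservation relation). Once the correct odd unitary group is pinned down, the remainder is a routine computation with $L$-parameters.
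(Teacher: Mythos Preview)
Your overall strategy---lift $\pi$ via theta to an odd unitary group, embed the lift in an induced representation via Lemma \ref{theta.ind}, read off its $L$-parameter using the LLC for odd unitary groups, then transport back---is the same as the paper's. However, you introduce an unnecessary case split that the paper avoids, and this split is the source of all the obstacles you flag.

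You propose to follow the recipe for $\calL$ literally, lifting either up to $U(V_{2n+1}^+)$ or down to $U(V_{2n-1}^-)$ depending on which is non-zero. As you yourself note, the going-down branch is problematic: Lemma \ref{theta.ind} is stated only for $l=-1$, and the $l=+1$ analogue would need separate justification. Your worries about matching $\chi_V\subset\phi$ versus $\chi_V\subset\phi_0$, and about the special case $\phi_\tau=\chi_V$, likewise stem from this case split.

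The paper bypasses all of this: rather than fixing the sign and allowing the dimension to vary, it simply chooses $\epsilon'\in\{\pm\}$ so that $\sigma\coloneqq\theta_{2n+1}^{\epsilon'}(\pi)\neq 0$ (such $\epsilon'$ always exists) and \emph{always} goes up. Then Lemma \ref{theta.ind} applies directly, giving
\[
\sigma\subset\Ind_P^{U(V_{2n+1}^{\epsilon'})}\bigl(\tau\chi_V^{-1}\chi_W\boxtimes\sigma_0\bigr),\qquad \sigma_0=\theta_{2n_0+1}^{\epsilon'}(\pi_0).
\]
Now Lemma \ref{Compatible.Parameter}(1) applies uniformly (for either $\epsilon'$), yielding $\phi_\sigma=\phi_\pi\chi_V^{-1}\chi_W\oplus\chi_W$ and $\phi_{\sigma_0}=\phi_0\chi_V^{-1}\chi_W\oplus\chi_W$; combining this with the odd-unitary LLC for the inclusion above gives $\phi_\pi=\phi$ immediately, with no case analysis. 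The point you may have missed is that once $\calL$ has been constructed and shown independent of $\underline\psi$, Lemma \ref{Compatible.Parameter} describes the parameter of \emph{any} non-zero going-up lift $\theta_{2n+1}^{\epsilon'}(\pi)$, not only the particular lift used in the original definition of $\calL$.
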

\begin{proof}
We pick up $\epsilon'\in\{\pm\}$ appropriately such that $\sigma\coloneqq\theta_{2n+1}^{\epsilon'}(\pi)$ is non-zero. By Lemma \ref{theta.ind}, we have
\begin{equation*}
\sigma\subset\Ind_P^{U(V_{2n+1}^{\epsilon'})}(\tau\chi_V^{-1}\chi_W\boxtimes\sigma_0),
\end{equation*}
where $P$ is a maximal parabolic subgroup of $U(V_{2n+1}^{\epsilon'})$ with Levi component $GL_k(E)\times U(V_{2n_0+1}^{\epsilon'})$, and $\sigma_0\coloneqq\theta_{2n_0+1}^{\epsilon'}(\pi_0)$. By using the LLC for odd unitary groups, it is easy to see that
\begin{equation*}
\phi_\sigma=\phi_\tau\chi_V^{-1}\chi_W\oplus\phi_{\sigma_0}\oplus(\phi_\tau^c)^\vee\chi_V^{-1}\chi_W.
\end{equation*}
On the other hand, by Lemma \ref{Compatible.Parameter}, we have
\begin{align*}
\phi_\sigma&=\phi_\pi\chi_V^{-1}\chi_W\oplus\chi_W,\\
\phi_{\sigma_0}&=\phi_{0}\chi_V^{-1}\chi_W\oplus\chi_W.
\end{align*}
From these equalities, we get $\phi_\pi=\phi$.
\end{proof}
Recall that there is a natural embedding $\calS_{\phi_0}\hookrightarrow\calS_\phi$ of component groups. We identify $\calS_{\phi_0}$ with a subgroup of $\calS_\phi$ via this embedding. 
\begin{corollary}\label{LIR.Reducibility}
The induced representation $\Ind_Q^{U(W_{2n}^\epsilon)}(\tau\boxtimes\pi_0)$ is semi-simple and multiplicity free. Moreover, we have 
\begin{enumerate}
\item If $\calS_{\phi_0}=\calS_\phi$, then $\Ind_Q^{U(W_{2n}^\epsilon)}(\tau\boxtimes\pi_0)$ is irreducible.
\item If $\calS_{\phi_0}$ is a proper subgroup of $\calS_\phi$, then $\Ind_Q^{U(W_{2n}^\epsilon)}(\tau\boxtimes\pi_0)$ is reducible, and has two inequivalent constituents.
\end{enumerate}
\end{corollary}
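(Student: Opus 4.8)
The plan is to determine the length of $\Pi\coloneqq\Ind_Q^{U(W_{2n}^\epsilon)}(\tau\boxtimes\pi_0)$ by reading it off Harish-Chandra's theory of $R$-groups, using the Plancherel measure formula already at our disposal. Since $\Pi$ is a unitarily induced representation from a \emph{maximal} parabolic with Levi $GL_k(E)\times U(W_{2n_0}^\epsilon)$, the relative Weyl group $W(M_Q)$ has order $2$; by the Knapp--Stein/Silberger theory the commuting algebra $\End_{U(W_{2n}^\epsilon)}(\Pi)$ is isomorphic to $\CC[R]$ for the $R$-group $R\subseteq W(M_Q)$ (there is no cocycle subtlety as $|R|\le 2$), so $\Pi$ is automatically semisimple, multiplicity-free, of length $|R|\in\{1,2\}$, and in the length-$2$ case its two constituents are inequivalent. (By Proposition~\ref{LIR.Parameter} all constituents lie in $\Pi_\phi$, but this is not even needed for the reducibility statement.) Thus everything reduces to showing that $R$ is non-trivial if and only if $\calS_{\phi_0}\subsetneq\calS_\phi$.

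\textbf{The computation.} The non-trivial $w\in W(M_Q)$ lies in $R$ exactly when $w(\tau\boxtimes\pi_0)\simeq\tau\boxtimes\pi_0$ --- equivalently $(\tau^c)^\vee\simeq\tau$, i.e.\ $\phi_\tau$ is conjugate self-dual --- and the Plancherel measure $\mu_\psi(\tau_s\boxtimes\pi_0)$ is non-zero at $s=0$ (it has no pole there, $\pi_0$ being tempered). I would then substitute the formula from Proposition~\ref{PreserveLocalFactor.Fin},
\[
\mu_\psi(\tau_s\boxtimes\pi_0)=\gamma(s,\phi_\tau\otimes\phi_0^\vee,\psi_E)\,\gamma(-s,\phi_\tau^\vee\otimes\phi_0,\psi_E^{-1})\,\gamma(2s,As^{+}\circ\phi_\tau,\psi)\,\gamma(-2s,As^{+}\circ\phi_\tau^\vee,\psi^{-1}),
\]
the Asai sign being $+$ since $\dim W_{2n_0}$ is even. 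Writing $\gamma(s,\rho,\psi)=\varepsilon(s,\rho,\psi)\,L(1-s,\rho^\vee)/L(s,\rho)$ and using that for a tempered $\rho$ the factor $L(s,\rho)$ is holomorphic and non-vanishing for $\mathrm{Re}(s)>0$ and has a pole at $s=0$ of order $\dim\rho^{WD_E}$, one sees that the first two $\gamma$-factors vanish at $s=0$ precisely when $\phi_\tau$ is a summand of $\phi_0$, and the last two precisely when $As^{+}\circ\phi_\tau$ contains the trivial representation. Hence $\mu_\psi(\tau_s\boxtimes\pi_0)$ is non-zero at $s=0$ if and only if $\phi_\tau\not\subseteq\phi_0$ and $As^{+}\circ\phi_\tau\not\supseteq\mathbf 1$.

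\textbf{Conclusion and the main obstacle.} Since $As^{+}\circ\phi_\tau\supseteq\mathbf 1$ exactly when $\phi_\tau$ is conjugate-orthogonal (conjugate self-dual of sign $+1$), the conjunction ``$\phi_\tau$ conjugate self-dual and $As^{+}\circ\phi_\tau\not\supseteq\mathbf 1$'' is the same as ``$\phi_\tau$ conjugate self-dual of sign $-1$'', i.e.\ of the sign $(-1)^{2n-1}$ relevant to $\Parat(2n)$; so $R$ is non-trivial iff $\phi_\tau$ is conjugate self-dual of the relevant sign and $\phi_\tau\not\subseteq\phi_0$, which, comparing with $\phi=\phi_\tau\oplus\phi_0\oplus(\phi_\tau^c)^\vee$ and the description of component groups recalled in Section~2, is exactly the statement $\calS_{\phi_0}\subsetneq\calS_\phi$. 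Then $\calS_{\phi_0}=\calS_\phi$ forces $R=1$ and $\Pi$ irreducible, while $\calS_{\phi_0}\subsetneq\calS_\phi$ (necessarily of index $2$) forces $|R|=2$, hence $\Pi$ reducible with two inequivalent, multiplicity-one constituents. The hardest part is getting the chain of dictionaries in the last two paragraphs exactly right: the precise form of the $R$-group criterion and its effect on the endomorphism algebra; the identification of the Asai sign as $+$, consistent with the normalizations of Section~\ref{IO}; and the convention-sensitive correspondence between poles of $L(s,\phi_\tau,As^{\pm})$, the conjugate self-dual type of $\phi_\tau$, and the sign $(-1)^{2n-1}$ built into $\calS_\phi$. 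As an alternative that avoids invoking $R$-group theory, one can transport the whole statement from the odd unitary groups $U(V_{2n\pm1}^{\pm})$, where it is part of Theorem~\ref{TheoremOdd}, using Lemma~\ref{theta.ind}, Lemma~\ref{Compatible.Parameter}, Howe duality and the conservation relation; there the delicate point is the count, since when $\chi_V\subseteq\phi$ the two constituents of $\Pi$ may theta-lift to different Witt towers and one must run the argument on $U(V_{2n+1}^{+})$ and $U(V_{2n+1}^{-})$ simultaneously.
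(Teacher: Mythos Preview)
Your proposal is correct and takes a genuinely different route from the paper's own proof. The paper argues entirely through the theta correspondence: semisimplicity is immediate from unitarity; multiplicity-freeness is obtained by transferring $m_Q(\pi)\le m_P(\theta(\pi))$ via Lemma~\ref{theta.ind} to the odd side, where the LLC (Theorem~\ref{TheoremOdd}) gives multiplicity one; case~(1) is a pure counting argument using $|\Pi_\phi|=|\widehat{\calS_\phi}|$ from Corollary~\ref{Size.Packet}; and case~(2) is handled by exhibiting at least two distinct constituents directly via the equivariant map $\calT_0$ of Proposition~\ref{equimap}, splitting into the sub-cases $\phi_\tau\neq\chi_V$ (where the odd-side induction is already reducible) and $\phi_\tau=\chi_V$ (where one uses \emph{both} Witt towers $V_{2n+1}^{\pm}$ simultaneously, exactly the subtlety you flag in your final sentence), with another packet-size count pinning down the length as exactly two.

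Your approach instead invokes Knapp--Stein/Silberger $R$-group theory together with the Plancherel formula of Proposition~\ref{PreserveLocalFactor.Fin}, which at this point in the paper is already available for the even unitary groups. The computation you outline --- vanishing of $\mu_\psi(\tau_s\boxtimes\pi_0)$ at $s=0$ iff $\phi_\tau\subset\phi_0$ or $As^+\circ\phi_\tau\supset\mathbf 1$, and the identification with $\calS_{\phi_0}\subsetneq\calS_\phi$ --- is correct. The only point I would ask you to make precise is the application of the $R$-group criterion when the inducing representation $\tau\boxtimes\pi_0$ is tempered but not necessarily square-integrable: the standard Knapp--Stein statement is for discrete-series inducing data, so you should either cite the appropriate extension (e.g.\ via Arthur's elementary theory of the $R$-group, or Goldberg's explicit results for classical groups) or reduce to the discrete case by writing $\pi_0$ as a summand of an induction from discrete series and using multiplicativity of $\mu_\psi$. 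Once that is in place, your argument is shorter and more classical than the paper's; the paper's argument, on the other hand, is self-contained within the theta-lift framework and avoids importing $R$-group machinery.
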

\begin{proof}
Since $\tau\boxtimes\pi_0$ is an irreducible unitary representation of $L$, the parabolic induction $\Ind_Q^{U(W_{2n}^\epsilon)}(\tau\boxtimes\pi_0)$ is unitary and of finite length, hence semi-simple. Let $\pi$ be an irreducible constituent of $\Ind_Q^{U(W_{2n}^\epsilon)}(\tau\boxtimes\pi_0)$, and
\[
	m_Q(\pi)=\dim\Hom_{U(W_{2n}^\epsilon)}\left(\pi,\Ind_Q^{U(W_{2n}^\epsilon)}(\tau\boxtimes\pi_0)\right).
\]
As in the proof of Proposition \ref{LIR.Parameter}, there exists $\epsilon'\in\{\pm1\}$, such that $\sigma\coloneqq\theta_{2n+1}^{\epsilon'}(\pi)$ is non-zero and 
\[
	\sigma\subset\Ind_P^{U(V_{2n+1}^{\epsilon'})}(\tau\chi_V^{-1}\chi_W\boxtimes\sigma_0),
\]
where $P$ is a maximal parabolic subgroup of $U(V_{2n+1}^{\epsilon'})$ with Levi component $GL_k(E)\times U(V_{2n_0+1}^{\epsilon'})$, and $\sigma_0\coloneqq\theta_{2n_0+1}^{\epsilon'}(\pi_0)$. By the LLC for odd unitary groups, $\Ind_P^{U(V_{2n+1}^{\epsilon'})}(\tau\chi_V^{-1}\chi_W\boxtimes\sigma_0)$ is multiplicity free. It then follows from Lemma \ref{theta.ind} that
\[
	m_Q(\pi)\leq1.
\]
Hence $\Ind_Q^{U(W_{2n}^\epsilon)}(\tau\boxtimes\pi_0)$ is also multiplicity free. We denote by $JH\left(\Ind_Q^{U(W_{2n}^\epsilon)}(\tau\boxtimes\pi_0)\right)$ the set of irreducible constituents of $\Ind_Q^{U(W_{2n}^\epsilon)}(\tau\boxtimes\pi_0)$. Consider the set
\[
	\bigsqcup_{\pi_0}JH\left(\Ind_Q^{U(W_{2n}^\epsilon)}(\tau\boxtimes\pi_0)\right),
\]
where the disjoint union runs over all $\pi_0\in\Pi_{\phi_0}$. By the Howe duality, Lemma \ref{theta.ind}, and Proposition \ref{LIR.Parameter}, this set is indeed a subset of $\Pi_\phi$.\\

Now suppose that $\calS_{\phi_0}=\calS_\phi$. Then we have
\[
	|\Pi_\phi|\geq\left|\bigsqcup_{\pi_0}JH\left(\Ind_Q^{U(W_{2n}^\epsilon)}(\tau\boxtimes\pi_0)\right)\right|\geq|\Pi_{\phi_0}|.
\]
But in this case, it follows from Corollary \ref{Size.Packet} that
\[
	|\Pi_\phi|=|\wh{\calS_\phi}|=|\wh{\calS_{\phi_0}}|=|\Pi_{\phi_0}|.
\]
Therefore we must have
\[
	\left|JH\left(\Ind_Q^{U(W_{2n}^\epsilon)}(\tau\boxtimes\pi_0)\right)\right|=1
\]
for all $\pi_0\in\Pi_{\phi_0}$. In other words, $\Ind_Q^{U(W_{2n}^\epsilon)}(\tau\boxtimes\pi_0)$ is irreducible.\\

Next suppose that $\calS_{\phi_0}$ is a proper subgroup of $\calS_\phi$. In this case, $\calS_{\phi_0}$ is an index two subgroup of $\calS_\phi$. We first show that for all $\pi_0\in\Pi_{\phi_0}$, we have
\[
	\left|JH\left(\Ind_Q^{U(W_{2n}^\epsilon)}(\tau\boxtimes\pi_0)\right)\right|\geq 2.
\]
In other words, $\Ind_Q^{U(W_{2n}^\epsilon)}(\tau\boxtimes\pi_0)$ is reducible. Let 
\begin{align*}
\phi_0^+&\coloneqq\phi_0\chi_V^{-1}\chi_W\oplus\chi_W,\\
\phi^+&\coloneqq\phi\chi_V^{-1}\chi_W\oplus\chi_W.
\end{align*}
Depending on the relative size of $\calS_{\phi_0^+}$ and $\calS_{\phi^+}$, there are two sub-cases:\\

\underline{Sub-case I:} If $\phi_\tau\neq\chi_V$, then $\calS_{\phi_0^+}$ is also a proper subgroup of $\calS_{\phi^+}$. Pick up any $\epsilon'\in\{\pm1\}$ such that $\sigma_0\coloneqq\theta_{2n_0+1}^{\epsilon'}(\pi_0)$ is non-zero. Then $\sigma_0$ has $L$-parameter $\phi_0^+$, and by the LLC for odd unitary groups,  
\[
	\Ind_P^{U(V_{2n+1}^{\epsilon'})}\Big(\left(\tau\chi_V^{-1}\chi_W\right)^c\boxtimes\sigma_0^\vee\Big)\simeq\Big(\Ind_P^{U(V_{2n+1}^{\epsilon'})}(\tau\chi_V^{-1}\chi_W\boxtimes\sigma_0)\Big)^\vee
\]
is reducible. By Proposition \ref{equimap}, there is a $U(V)\times U(W)$-equivariant map
\begin{equation*}
\calT_0:\omega\otimes\Ind_P^{U(V_{2n+1}^{\epsilon'})}\Big(\left(\tau\chi_V^{-1}\chi_W\right)^c\boxtimes\sigma_0^\vee\Big)\lra\Ind_Q^{U(W_{2n}^\epsilon)}(\tau\boxtimes\pi_0),
\end{equation*}
such that for any irreducible constituent $\sigma$ of $\Ind_P^{U(V_{2n+1}^{\epsilon'})}(\tau\chi_V^{-1}\chi_W\boxtimes\sigma_0)$, the restriction $\calT_0\big|_{\omega\otimes\sigma^\vee}$ is non-vanishing, and its image is just $\theta_{2n}^\epsilon(\sigma)$. Hence $\Ind_Q^{U(W_{2n}^\epsilon)}(\tau\boxtimes\pi_0)$ at least contains all these $\theta_{2n}^\epsilon(\sigma)$ as subrepresentations. In particular, $\Ind_Q^{U(W_{2n}^\epsilon)}(\tau\boxtimes\pi_0)$ is also reducible.\\

\underline{Sub-case II:} If $\phi_\tau=\chi_V$, then the natural embedding $\calS_{\phi_0^+}\hookrightarrow\calS_{\phi^+}$ is an isomorphism. Our assumptions in this sub-case imply that $\chi_V\not\subset\phi_0$. By Corollary \ref{Lift.Up}, both $\sigma_0^+\coloneqq\theta_{2n_0+1}^+(\pi_0)$ and $\sigma_0^-\coloneqq\theta_{2n_0+1}^-(\pi_0)$ are non-zero. Moreover, for $\epsilon'\in\{\pm1\}$, $\sigma_0^{\epsilon'}$ has $L$-parameter $\phi_0^+$, and by the LLC for odd unitary groups,
\[\Ind_P^{U(V_{2n+1}^{\epsilon'})}\Big(\left(\tau\chi_V^{-1}\chi_W\right)^c\boxtimes\left(\sigma_0^{\epsilon'}\right)^\vee\Big)\simeq\Big(\Ind_P^{U(V_{2n+1}^{\epsilon'})}\left(\tau\chi_V^{-1}\chi_W\boxtimes\sigma_0^{\epsilon'}\right)\Big)^\vee\]
is irreducible. Similar to Sub-case I, there are non-vanishing $U(V)\times U(W)$-equivariant maps
$$\calT_0^{\epsilon'}:\omega\otimes\Ind_P^{U(V_{2n+1}^{{\epsilon'}})}\Big(\left(\tau\chi_V^{-1}\chi_W\right)^c\boxtimes\left(\sigma_0^{\epsilon'}\right)^\vee\Big)\lra\Ind_Q^{U(W_{2n}^\epsilon)}(\tau\boxtimes\pi_0).$$
Let 
$$\pi^{\epsilon'}\coloneqq\Image(\calT_0^{\epsilon'})\subset\Ind_Q^{U(W_{2n}^\epsilon)}(\tau\boxtimes\pi_0).$$
Then by the Howe duality, we know that $\pi^{\epsilon'}$ is irreducible and is just the theta lift of $\Ind_P^{U(V_{2n+1}^{\epsilon'})}\left(\tau\chi_V^{-1}\chi_W\boxtimes\sigma_0^{\epsilon'}\right)$. Since $\pi^{\epsilon'}$ has $L$-parameter $\phi$ and $\chi_V\subset\phi$, it follows from Lemma \ref{Lift.Down} that  
\begin{equation*}
\pi^+\not\simeq\pi^-,
\end{equation*}
which implies that $\Ind_Q^{U(W_{2n}^\epsilon)}(\tau\boxtimes\pi_0)$ is reducible.\\

Now similar to the previous case, we have
\[
	|\Pi_\phi|\geq\left|\bigsqcup_{\pi_0}JH\left(\Ind_Q^{U(W_{2n}^\epsilon)}(\tau\boxtimes\pi_0)\right)\right|\geq2|\Pi_{\phi_0}|.
\]
Again in this case, Corollary \ref{Size.Packet} forces these inequalities to be equalities. Therefore we conclude that
\[
	\left|JH\left(\Ind_Q^{U(W_{2n}^\epsilon)}(\tau\boxtimes\pi_0)\right)\right|=2
\]
for all $\pi_0\in\Pi_{\phi_0}$. This completes the proof.
\end{proof}

\subsection{Actions of intertwining operators}
In the previous subsection, we showed that $\Ind_Q^{U(W_{2n}^\epsilon)}(\tau\boxtimes\pi_0)$ is semi-simple and multiplicity free. In this subsection, we prove the following: 
\begin{proposition}\label{LIR.Action.LIO}
Assume that $\phi_\tau$ is conjugate self-dual. Let $\pi=\pi(\phi,\eta)$ be an irreducible constituent of $\Ind_Q^{U(W_{2n}^\epsilon)}(\tau\boxtimes\pi_0)$. Then the restriction of the normalized intertwining operator $R(w,\tau\boxtimes\pi_0)$ to $\pi$ is the scalar multiplication by
\begin{equation*}
\begin{cases}
\epsilon^k\cdot\eta(a_\tau)\quad &\textit{if } \phi_\tau\textit{ is conjugate symplectic};\\
\epsilon^k &\textit{if } \phi_\tau\textit{ is conjugate orthogonal},
\end{cases}
\end{equation*}
where $a_\tau$ is the element in $\calS_\phi$ corresponding to $\phi_\tau$.
\end{proposition}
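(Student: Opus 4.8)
The plan is to derive this from the local intertwining relation for \emph{odd} unitary groups --- which holds by Mok's classification (Theorem~\ref{LLC.QS.Mok}) together with Theorem~\ref{TheoremOdd}, and which we take as input --- by transporting it through the Gan--Ichino equivariant map of Propositions~\ref{equimap} and~\ref{comp.IO}. Choose $\epsilon'\in\{\pm1\}$ exactly as in the construction of $\calJ_\scrW$ in Section~\ref{LLC.Construction.J}, so that $\sigma\coloneqq\theta_{2n+1}^{\epsilon'}(\pi)\neq0$ and $\eta=\eta_\sigma\big|_{\calS_\phi}$, and set $\sigma_0\coloneqq\theta_{2n_0+1}^{\epsilon'}(\pi_0)$ and $\tau'\coloneqq\tau\chi_V^{-1}\chi_W$. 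By Lemma~\ref{theta.ind} (applied with ``$\tau$'' there equal to $\tau\chi_V^{-1}$), $\sigma$ is a constituent of $\Ind_P^{U(V_{2n+1}^{\epsilon'})}(\tau'\boxtimes\sigma_0)$, where $P$ has Levi $GL_k(E)\times U(V_{2n_0+1}^{\epsilon'})$; and, exactly as in the proof of Corollary~\ref{LIR.Reducibility}, the map $\calT_0$ of Proposition~\ref{comp.IO} restricts to a nonzero $U(W_{2n}^\epsilon)$-equivariant map $\omega\otimes\sigma^\vee\to\Ind_Q^{U(W_{2n}^\epsilon)}(\tau\boxtimes\pi_0)$ whose image is $\theta_{2n}^\epsilon(\sigma)=\pi$ (non-vanishing by Proposition~\ref{equimap}(2), valid since $\dim V_{2n+1}^{\epsilon'}>\dim W_{2n}^\epsilon$; the last equality by Howe duality). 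Thus the scalar by which $R(w,\tau\boxtimes\pi_0)$ acts on $\pi$ can be computed by conjugating through $\calT_0$.

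Next I would upgrade Proposition~\ref{comp.IO} from the normalized operators $\calR$ to the full self-intertwining operators $R=\calA_w\circ\calR$ and $R=\calA_{w'}\circ\calR$. Since $\calA_w$ and $\calA_{w'}$ are both normalized by the $\psi_E$-Whittaker functional on the common $GL_k(E)$-factor, and $\calT_s$ is built from the Weil representation attached to $\psi$, the diagram of Proposition~\ref{comp.IO} still commutes up to an explicit scalar once the $\calA$'s are inserted; the only additional ingredient is a comparison of the two Whittaker normalizations across $\calT_0$, of the kind performed in \cite{MR3573972} (see also \cite{MR4308934}). Evaluating at $s=0$, where the normalized operators are holomorphic and $\beta(s)$ is finite and nonzero, and then restricting to $\omega\otimes\sigma^\vee$, we obtain that $R(w,\tau\boxtimes\pi_0)$ acts on $\pi$ by $c$ times the scalar by which $R(w',\tau'\boxtimes\sigma_0)$ acts on $\sigma$, for an explicit constant $c$ built from $\alpha$, $\beta(0)$, and the Whittaker correction. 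Here we use in addition that this last scalar is unchanged upon passing from $\sigma$ to its contragredient --- which follows from the adjoint property of the normalized intertwining operators established in Section~\ref{IO} together with the MVW involution, and from property~(10) of Theorem~\ref{MainTheorem} for $U(V_{2n+1}^{\epsilon'})$ (giving $\nu=1$ as $2n+1$ is odd).

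For the value of the odd-group operator, note that $\chi_V^{-1}\chi_W$ is conjugate self-dual with $(\chi_V^{-1}\chi_W)\big|_{F^\times}=\omega_{E/F}$, hence conjugate symplectic; therefore $\phi_{\tau'}=\phi_\tau\cdot\chi_V^{-1}\chi_W$ is conjugate self-dual exactly when $\phi_\tau$ is, and $\phi_{\tau'}$ carries the distinguished sign $(-1)^{(2n+1)-1}=+1$ for $U(2n+1)$ precisely when $\phi_\tau$ carries the distinguished sign $(-1)^{2n-1}$ for $U(2n)$, i.e.\ precisely when $\phi_\tau$ is conjugate symplectic. By the LIR for $U(V_{2n+1}^{\epsilon'})$ (item~(7) of Theorem~\ref{MainTheorem}, which holds for odd $n$ by Theorem~\ref{TheoremOdd}), $R(w',\tau'\boxtimes\sigma_0)$ acts on $\sigma=\pi(\phi_\sigma,\eta_\sigma)$ by $(\epsilon')^k\,\eta_\sigma(a_{\tau'})$ when $\phi_\tau$ is conjugate symplectic and by $(\epsilon')^k$ when $\phi_\tau$ is conjugate orthogonal. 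Under the identification $\calS_\phi\hookrightarrow\calS_{\phi_\sigma}$ supplied by Lemma~\ref{Compatible.Parameter} and the construction of $\calJ_\scrW$, the generator $a_{\tau'}$ corresponds to $a_\tau$ and $\eta_\sigma\big|_{\calS_\phi}=\eta$, so $\eta_\sigma(a_{\tau'})=\eta(a_\tau)$.

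The remaining task --- and the part I expect to be the main obstacle --- is to evaluate the constant $c$. It is the product of the factor $\alpha$ of Proposition~\ref{comp.IO} (Langlands $\lambda$-factors, the signs $\kappa_V$ and $\kappa_W$, values of $\chi_V$ and $\chi_W$ at explicit elements, the central-character value $\omega_\tau(\pm1)$), of $\beta(0)$ (a ratio of $L$- and $\gamma$-factors, evaluated using that $\phi_\tau$ is tempered and that the shift $s_0$ is the expected one), and of the correction relating the two $\psi_E$-Whittaker normalizations through $\calT_0$. Carrying this out in the same spirit as the computations in \cite{MR3573972} and \cite{MR4308934}, one finds $c\cdot(\epsilon')^k=\epsilon^k$ --- a convenient consistency check in the case $\chi_V\not\subset\phi$, where $\epsilon'$ may be chosen freely, being that the left-hand side of the Proposition does not depend on $\epsilon'$. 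Combining this with the previous paragraph gives the scalar $\epsilon^k\eta(a_\tau)$ when $\phi_\tau$ is conjugate symplectic and $\epsilon^k$ when $\phi_\tau$ is conjugate orthogonal, which is the assertion.
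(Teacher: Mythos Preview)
Your approach is essentially the same as the paper's: choose $\epsilon'$ as in the construction of $\calJ_\scrW$, pass to $\sigma=\theta_{2n+1}^{\epsilon'}(\pi)$ and $\sigma_0=\theta_{2n_0+1}^{\epsilon'}(\pi_0)$, embed $\sigma^\vee\subset\Ind_P^{U(V_{2n+1}^{\epsilon'})}\big((\tau\chi_V^{-1}\chi_W)^c\boxtimes\sigma_0^\vee\big)$, use the equivariant map $\calT_0$ together with Proposition~\ref{comp.IO} to compare the two intertwining scalars, and then invoke the LIR for odd unitary groups and the computation $\epsilon^k(\epsilon')^k\alpha\beta(0)=1$ from \cite{MR3573972} Section~8.4.

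Two minor simplifications relative to your write-up. First, no separate ``Whittaker correction'' is needed: the isomorphisms $\calA_w$ and $\calA_{w'}$ act only on the common $GL_k$-factor and are both normalized by the same $\psi_E$-Whittaker functional, so they are compatible through $\calT_0$; the identity $\scrR(\pi)=\alpha\cdot\beta(0)\cdot\scrR(\sigma^\vee)$ holds with exactly the $\alpha,\beta(0)$ of Proposition~\ref{comp.IO} (this is what the reference to \cite{MR3573972} Section~8.4 supplies). Second, your detour comparing the scalar on $\sigma$ with that on $\sigma^\vee$ via the adjoint property and MVW is unnecessary: one applies the odd-group LIR directly to $\sigma^\vee$ as a constituent of $\Ind_P\big((\tau\chi_V^{-1}\chi_W)^c\boxtimes\sigma_0^\vee\big)$, using that $\nu=1$ in odd dimension (property~(10)) so $\eta_{\sigma^\vee}=\eta_\sigma$, and that the element $a_{(\tau')^c}\in\calS_{\phi_\sigma^\vee}$ corresponds to $a_{\tau'}\in\calS_{\phi_\sigma}$ under the obvious identification. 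With these shortcuts your argument coincides with the paper's.
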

\begin{proof}
Since $\Ind_Q^{U(W_{2n}^\epsilon)}(\tau\boxtimes\pi_0)$ is multiplicity free, the restriction of $R\left(w,\tau\boxtimes\pi_0\right)$ to $\pi$ gives a self intertwining operator of $\pi$. Hence by Schur's Lemma, $R\left(w,\tau\boxtimes\pi_0\right)$ acts on $\pi$ by a scalar. Let's denote this scalar by $\scrR(\pi)$. We want to relate the scalar $\scrR(\pi)$ with the character $\eta$.\\

Let
\begin{equation*}
\epsilon'=\begin{cases}
+\quad&\textit{if } \theta_{2n+1}^+(\pi)\neq 0;\\
-\quad&\textit{otherwise},
\end{cases}
\end{equation*}
and let $\sigma\coloneqq\theta_{2n+1}^{\epsilon'}(\pi)$ (which is non-zero by the conservation relation). Recall that there is a natural embedding of component groups $\calS_\phi\hookrightarrow\calS_{\phi_\sigma}$, and it follows from our construction that $\eta=\eta_\sigma\big|_{\calS_\phi}$. According to Lemma \ref{theta.ind}, we have
\begin{equation*}
\sigma\subset\Ind_P^{U(V_{2n+1}^{\epsilon'})}(\tau\chi_V^{-1}\chi_W\boxtimes\sigma_0),
\end{equation*}
where $P$ is a maximal parabolic subgroup of $U(V_{2n+1}^{\epsilon'})$ with Levi component $GL_k(E)\times U(V_{2n_0+1}^{\epsilon'})$, and $\sigma_0\coloneqq\theta_{2n_0+1}^{\epsilon'}(\pi_0)$. Hence
\begin{equation*}
\sigma^\vee\subset\Ind_P^{U(V_{2n+1}^{\epsilon'})}\Big(\left(\tau\chi_V^{-1}\chi_W\right)^c\boxtimes\sigma_0^\vee\Big).
\end{equation*}
By Lemma \ref{equimap}, there exists a $U(V_{2n+1}^{\epsilon'})\times U(W_{2n}^\epsilon)$-equivariant map
\begin{equation*}
\calT_0:\omega\otimes\Ind_P^{U(V_{2n+1}^{\epsilon'})}\Big(\left(\tau\chi_V^{-1}\chi_W\right)^c\boxtimes\sigma_0^\vee\Big) \lra \Ind_Q^{U(W_{2n}^\epsilon)}\left(\tau\boxtimes\pi_0\right),
\end{equation*}
whose restriction to $\omega\otimes \sigma^\vee$ gives an epimorphism
\begin{equation*}
\calT_0:\omega\otimes\sigma^\vee\lra\pi.
\end{equation*}
Applying Proposition \ref{comp.IO}, we get
\begin{equation*}
\scrR(\pi)=\alpha\cdot\beta(0)\cdot\scrR(\sigma^\vee),
\end{equation*}
where $\scrR(\sigma^\vee)$ is the scalar defined by the action of the normalized intertwining operator $R\left(w',(\tau\chi_V^{-1}\chi_W)^c\boxtimes\sigma_0^\vee\right)$ on $\sigma^\vee$. Following the calculation in \cite{MR3573972} Section 8.4, we have
\begin{equation*}
\epsilon^k\cdot(\epsilon')^k\cdot\alpha\cdot\beta(0)=1.
\end{equation*}
Then one can easily deduce the desired formula for $\scrR(\pi)$ from these two equalities and the LLC for odd unitary groups.
\end{proof}

\subsection{Matching characters of component groups}
Let $\pi$ be an irreducible constituent of $\Ind_Q^{U(W_{2n}^\epsilon)}(\tau\boxtimes\pi_0)$. We showed in Proposition \ref{LIR.Parameter} that the $L$-parameter of $\pi$ is $\phi$. In this subsection, we are going to relate the character $\eta=\calJ_\scrW(\pi)$ of $\calS_\phi$ with $\eta_0$.\\

We first consider a special case.
\begin{lemma}\label{LIR.Compatibility.1st}
Assume that the natural embedding $\calS_{\phi_0}\hookrightarrow\calS_\phi$ is an isomorphism. Then
\[
	\eta\big|_{\calS_{\phi_0}}=\eta_0.
\]
\end{lemma}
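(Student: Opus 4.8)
The plan is to reduce the statement to the corresponding compatibility for an odd unitary group, where it is available as the Local Intertwining Relation (property (7) of Theorem \ref{MainTheorem}, valid for odd unitary groups by Theorem \ref{TheoremOdd}), by transporting everything through the theta lift used to define $\calJ_\scrW$. Since the embedding $\calS_{\phi_0}\hookrightarrow\calS_\phi$ is an isomorphism, Corollary \ref{LIR.Reducibility} shows that $\Ind_Q^{U(W_{2n}^\epsilon)}(\tau\boxtimes\pi_0)$ is irreducible, so $\pi$ is this whole induced representation and $\eta=\calJ_\scrW(\pi)$ is a single, well-defined character of $\calS_\phi\simeq\calS_{\phi_0}$.

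First I would choose the sign $\epsilon'\in\{\pm1\}$ so that $\sigma\coloneqq\theta_{2n+1}^{\epsilon'}(\pi)$ is exactly the theta lift appearing in the definition of $\eta$ (that is, $\epsilon'=+$ if $\chi_V\not\subset\phi$, and $\epsilon'$ the unique sign with $\theta_{2n-1}^{\epsilon'}(\pi)\neq0$ if $\chi_V\subset\phi$). Applying Lemma \ref{theta.ind} to the inclusion $\pi\subset\Ind_Q^{U(W_{2n}^\epsilon)}(\tau\boxtimes\pi_0)$ and using $\sigma\neq0$, I get $\sigma_0\coloneqq\theta_{2n_0+1}^{\epsilon'}(\pi_0)\neq0$ together with
\[
\sigma\subset\Ind_P^{U(V_{2n+1}^{\epsilon'})}\big(\tau\chi_V^{-1}\chi_W\boxtimes\sigma_0\big)
\]
for the appropriate maximal parabolic $P$. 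By Lemma \ref{Compatible.Parameter} we have $\phi_\sigma=\phi\chi_V^{-1}\chi_W\oplus\chi_W$ and $\phi_{\sigma_0}=\phi_0\chi_V^{-1}\chi_W\oplus\chi_W$, so $\phi_{\sigma_0}$ sits inside $\phi_\sigma$ as the sub-parameter corresponding to $\phi_0\subset\phi$, and the evident inclusions of component groups fit into a commuting square relating $\calS_{\phi_0}\hookrightarrow\calS_\phi$ with $\calS_{\phi_{\sigma_0}}\hookrightarrow\calS_{\phi_\sigma}$.

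Next I would verify that $\sigma_0$ is precisely the theta lift of $\pi_0$ used in the definition of $\eta_0$. If $\chi_V\not\subset\phi$ then $\chi_V\not\subset\phi_0$, so $\eta_0$ is defined via $\theta_{2n_0+1}^{+}(\pi_0)$; and $\epsilon'=+$ here, so this is $\sigma_0$. If $\chi_V\subset\phi$, then the hypothesis $\calS_{\phi_0}\simeq\calS_\phi$ forces $\chi_V\subset\phi_0$ (otherwise $\chi_V$ would occur only through $\phi_\tau$, a conjugate self-dual summand of sign $-1$ absent from $\phi_0$, and would contribute an extra generator to $\calS_\phi$); hence $\eta_0$ is defined via $\theta_{2n_0+1}^{\epsilon''}(\pi_0)$ for the unique $\epsilon''$ with $\theta_{2n_0-1}^{\epsilon''}(\pi_0)\neq0$, and the conservation relation (which forces $\theta_{2n_0+1}^{-\epsilon''}(\pi_0)=0$) combined with $\sigma_0\neq0$ gives $\epsilon'=\epsilon''$. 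In either case, by construction $\eta=\eta_\sigma\big|_{\calS_\phi}$ and $\eta_0=\eta_{\sigma_0}\big|_{\calS_{\phi_0}}$, where $\eta_\sigma,\eta_{\sigma_0}$ are the characters attached to $\sigma,\sigma_0$ by the LLC for odd unitary groups (Theorem \ref{TheoremOdd}).

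Finally I would invoke the Local Intertwining Relation for the odd unitary group $U(V_{2n+1}^{\epsilon'})$: since $\tau\chi_V^{-1}\chi_W$ is an (essentially) discrete series representation and $\sigma$ is an irreducible constituent of $\Ind_P^{U(V_{2n+1}^{\epsilon'})}(\tau\chi_V^{-1}\chi_W\boxtimes\sigma_0)$, this gives $\eta_\sigma\big|_{\calS_{\phi_{\sigma_0}}}=\eta_{\sigma_0}$. Restricting further along $\calS_{\phi_0}\hookrightarrow\calS_{\phi_{\sigma_0}}$ and using the commuting square of component groups from the second paragraph, I obtain
\[
\eta\big|_{\calS_{\phi_0}}=\big(\eta_\sigma\big|_{\calS_\phi}\big)\big|_{\calS_{\phi_0}}=\eta_\sigma\big|_{\calS_{\phi_0}}=\big(\eta_\sigma\big|_{\calS_{\phi_{\sigma_0}}}\big)\big|_{\calS_{\phi_0}}=\eta_{\sigma_0}\big|_{\calS_{\phi_0}}=\eta_0,
\]
as desired. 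The step I expect to be the main obstacle is the one in the third paragraph: checking that the sign $\epsilon'$ dictated by the lift of $\pi$ coincides with the sign used to define $\eta_0$ from $\pi_0$. This is precisely where the hypothesis $\calS_{\phi_0}\simeq\calS_\phi$ and the conservation relation are indispensable; once both lifts are known to land on the same Hermitian space, the remainder is the routine compatibility of the natural maps between component groups.
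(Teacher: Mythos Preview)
Your proof is correct and follows essentially the same approach as the paper's: choose the sign $\epsilon'$ so that $\sigma=\theta_{2n+1}^{\epsilon'}(\pi)$ is the lift defining $\eta$, use Lemma \ref{theta.ind} to place $\sigma$ inside the induced representation from $\sigma_0=\theta_{2n_0+1}^{\epsilon'}(\pi_0)$, verify that this same $\sigma_0$ is the lift defining $\eta_0$, and then conclude via the commuting square of component groups and the LLC for odd unitary groups. The key step you flagged---matching the sign $\epsilon'$ for $\pi$ with the sign used for $\pi_0$---is exactly what the paper isolates as equation (\ref{Eqn.dagger}); your case-by-case verification of it (in particular the observation that $\chi_V\subset\phi$ together with $\calS_{\phi_0}\simeq\calS_\phi$ forces $\chi_V\subset\phi_0$, and then the conservation relation pins down the sign) is correct and is precisely the content the paper leaves as ``one can easily check case-by-case''.
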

\begin{proof}
Similar to the proof of Proposition \ref{LIR.Action.LIO}, we can pick up $\epsilon'_0,\epsilon'\in\{\pm\}$ appropriately such that $\sigma_0\coloneqq\theta_{2n_0+1}^{\epsilon'_0}(\pi_0)$ and $\sigma\coloneqq\theta_{2n+1}^{\epsilon'}(\pi)$ are non-zero, and by our construction,
\begin{align*}
	\eta_0&=\eta_{\sigma_0}\big|_{\calS_{\phi_0}},\\
	\eta&=\eta_{\sigma}\big|_{\calS_{\phi}}.
\end{align*}
One can easily check case-by-case that under our assumption,
\begin{equation*}\tag{\dag}\label{Eqn.dagger}
	\epsilon'_0=\epsilon'.
\end{equation*}
On the other hand, by Lemma \ref{theta.ind}, we have
\begin{equation*}
\sigma\subset\Ind_P^{U(V_{2n+1}^{\epsilon'})}(\tau\chi_V^{-1}\chi_W\boxtimes\sigma_0),
\end{equation*}
where $P$ is a maximal parabolic subgroup of $U(V_{2n+1}^{\epsilon'})$ with Levi component $GL_k(E)\times U(V_{2n_0+1}^{\epsilon'})$. We have a commutative diagram
\begin{equation*}
\begin{CD}
\calS_{\phi_0} @>>> \calS_\phi\\
@VVV @VVV\\
\calS_{\phi_{\sigma_0}} @>>> \calS_{\phi_\sigma}
\end{CD}
\end{equation*}
Here every arrow in this diagram is the natural one. Hence we get
\begin{align*}
\eta\big|_{\calS_{\phi_0}}&=\left(\eta_{\sigma}\big|_{\calS_\phi}\right)\Big|_{\calS_{\phi_0}}\quad &\textit{(by our construction of }\eta\textit{)}\\
&=\left(\eta_\sigma\big|_{\calS_{\phi_{\sigma_0}}}\right)\Big|_{\calS_{\phi_0}} &\textit{(by the commutative diagram)}\\
&=\eta_{\sigma_0}\big|_{\calS_{\phi_0}} &\textit{(by the LLC for odd unitary groups)}\\
&=\eta_0. &\textit{(by our construction of }\eta_0\textit{)}
\end{align*}
\end{proof} 
Here in this lemma, the assumption is only used to guarantee that the equality (\ref{Eqn.dagger}) holds, which may fail in the general case. With this special case at hand, we can show that
\begin{corollary}\label{LIR.Compatibility.2nd}
Let $\epsilon'\in\{\pm1\}$. Assume that $\sigma_0\coloneqq\theta_{2n_0+1}^{\epsilon'}(\pi_0)$ is non-zero. Then
\[
	\eta_0=\eta_{\sigma_0}\big|_{\calS_{\phi_0}}.
\]
Here we use the natural embedding $\calS_{\phi_0}\hookrightarrow\calS_{\phi_{\sigma_0}}$ to identify $\calS_{\phi_0}$ with a subgroup of $\calS_{\phi_{\sigma_0}}$. 
\end{corollary}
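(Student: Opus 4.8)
The plan is to reduce, via the definition of $\eta_0$ itself, to one substantive sub-case and to settle that one with the reducibility analysis and the intertwining-operator computation already in place. The assertion is immediate from the construction of $\calJ_\scrW$ for $\pi_0$ in two situations. If $\chi_V\subset\phi_0$ we are in CASE II: there $\eta_0$ was defined as $\eta_{\sigma_0}|_{\calS_{\phi_0}}$ with $\sigma_0=\theta_{2n_0+1}^{\epsilon'_0}(\pi_0)$, where $\epsilon'_0$ is the unique sign with $\theta_{2n_0-1}^{\epsilon'_0}(\pi_0)\neq0$; by the conservation relation $\theta_{2n_0+1}^{-\epsilon'_0}(\pi_0)=0$, so every admissible $\epsilon'$ equals $\epsilon'_0$ and there is nothing to prove. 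If $\chi_V\not\subset\phi_0$ and $\epsilon'=+$ the assertion is again the definition (CASE I). So the only case left is $\chi_V\not\subset\phi_0$ with $\epsilon'=-$: by Corollary \ref{Lift.Up} and the conservation relation both $\sigma_0^{\pm}:=\theta_{2n_0+1}^{\pm}(\pi_0)$ are nonzero, with common parameter $\psi_0:=\phi_0\chi_V^{-1}\chi_W\oplus\chi_W$ and $\calS_{\psi_0}=\calS_{\phi_0}\oplus(\ZZ/2\ZZ)e$ (where $e$ is the generator attached to $\chi_W$), and we must show $\eta_{\sigma_0^-}|_{\calS_{\phi_0}}=\eta_{\sigma_0^+}|_{\calS_{\phi_0}}\ (=\eta_0)$.

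For this I would pass to the group two ranks larger. Consider $\Ind_Q^{U(W_{2n_0+2}^\epsilon)}(\chi_V\boxtimes\pi_0)$, whose parameter is $\phi:=2\chi_V\oplus\phi_0$ with $\calS_\phi=\calS_{\phi_0}\oplus(\ZZ/2\ZZ)a_\tau$. By Corollary \ref{LIR.Reducibility} (Sub-case II of its proof, which is exactly the situation $\phi_\tau=\chi_V$, $\chi_V\not\subset\phi_0$) this representation is reducible with exactly two irreducible constituents $\pi^{\pm}$, and $\pi^{\epsilon''}$ is the theta lift to $U(W_{2n_0+2}^\epsilon)$ of the irreducible representation $\tilde\sigma^{\epsilon''}:=\Ind_P^{U(V_{2n_0+3}^{\epsilon''})}(\chi_W\boxtimes\sigma_0^{\epsilon''})$. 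Running the CASE II prescription of the construction of $\calJ_\scrW$ for $\pi^{\epsilon''}$ — the distinguished tower is the $\epsilon''$-tower, since $\theta_{2n_0+3}^{\epsilon''}(\pi^{\epsilon''})=\tilde\sigma^{\epsilon''}\neq0$ by Howe duality and the conservation relation then forces $\theta_{2n_0+1}^{-\epsilon''}(\pi^{\epsilon''})=0$ — gives $\eta_{\pi^{\epsilon''}}=\eta_{\tilde\sigma^{\epsilon''}}|_{\calS_\phi}$, while the LLC for odd unitary groups (compatibility with Langlands quotients, Theorem \ref{TheoremOdd}) gives $\eta_{\tilde\sigma^{\epsilon''}}|_{\calS_{\psi_0}}=\eta_{\sigma_0^{\epsilon''}}$. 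Combining, $\eta_{\pi^{\epsilon''}}|_{\calS_{\phi_0}}=\eta_{\sigma_0^{\epsilon''}}|_{\calS_{\phi_0}}$; since $\eta_{\pi^+}|_{\calS_{\phi_0}}=\eta_0$ by the construction of $\eta_0$, it remains only to prove $\eta_{\pi^-}|_{\calS_{\phi_0}}=\eta_{\pi^+}|_{\calS_{\phi_0}}$.

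To close this I would use Proposition \ref{LIR.Action.LIO}: the normalized self-intertwining operator $R(w,\chi_V\boxtimes\pi_0)$ acts on $\pi^{\epsilon''}=\pi(\phi,\eta_{\pi^{\epsilon''}})$ by the scalar $\epsilon\cdot\eta_{\pi^{\epsilon''}}(a_\tau)$, and since $\Ind_Q^{U(W_{2n_0+2}^\epsilon)}(\chi_V\boxtimes\pi_0)$ is reducible this operator is non-scalar (Knapp--Stein), whence $\eta_{\pi^+}(a_\tau)\neq\eta_{\pi^-}(a_\tau)$. Tracking the value $\eta_{\pi^{\epsilon''}}(a_\tau)=\eta_{\sigma_0^{\epsilon''}}(e)$ from the previous step, using $\eta_{\sigma_0^{\epsilon''}}(z_{\psi_0})=\epsilon''$ for the odd groups together with $z_{\psi_0}=z_{\phi_0}+e$ and $z_\phi=z_{\phi_0}\in\calS_{\phi_0}$ (the multiplicity of $\chi_V$ in $\phi$ being $2$), one first gets that $\eta_{\pi^+}$ and $\eta_{\pi^-}$ agree at $z_{\phi_0}$; to upgrade this to agreement on all of $\calS_{\phi_0}$ one repeats the construction inserting, before the $\chi_V$-block, a discrete-series block $\rho_i$ whose parameter is a generator-constituent of $\phi_0$ — appending such a block does not enlarge the component group, so Lemma \ref{LIR.Compatibility.1st} and Proposition \ref{LIR.Action.LIO} apply to the corresponding maximal intertwining operator and let one compare $\eta_{\pi^{\pm}}$ on each generator of $\calS_{\phi_0}$ separately.

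I expect this last upgrade to be the main obstacle: converting the scalar identities produced by maximal-parabolic intertwining operators into the full equality of characters on $\calS_{\phi_0}$ (rather than merely at $z_{\phi_0}$) is precisely the bookkeeping the paper appears to be compressing, and should it not go through cleanly at this stage one can instead invoke Gan--Ichino's Prasad conjecture (\cite{MR3573972}) for the pair $U(V_{2n_0+1}^{\pm})\times U(W_{2n_0}^+)$ to settle the quasi-split member directly and then transfer.
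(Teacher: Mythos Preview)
Your case reduction is correct, and the observation that the only substantive case is $\chi_V\not\subset\phi_0$, $\epsilon'=-$ is a clean start. But the $\chi_V$-induction detour does not actually advance you. After passing to $\pi^\pm\subset\Ind_Q^{U(W_{2n_0+2}^\epsilon)}(\chi_V\boxtimes\pi_0)$ and identifying $\eta_{\pi^{\epsilon''}}|_{\calS_{\phi_0}}=\eta_{\sigma_0^{\epsilon''}}|_{\calS_{\phi_0}}$, the remaining assertion $\eta_{\pi^+}|_{\calS_{\phi_0}}=\eta_{\pi^-}|_{\calS_{\phi_0}}$ is \emph{literally} the statement $\eta_{\sigma_0^+}|_{\calS_{\phi_0}}=\eta_{\sigma_0^-}|_{\calS_{\phi_0}}$ you set out to prove. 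The Knapp--Stein input from Proposition~\ref{LIR.Action.LIO} only tells you the two characters differ at the new generator $a_\tau$; your computation at $z_{\phi_0}$ is fine, but it only gives agreement at one element. Your proposed ``upgrade'' (inserting a further block $\rho_i$ and invoking Lemma~\ref{LIR.Compatibility.1st} and Proposition~\ref{LIR.Action.LIO}) does not close the gap as written: Proposition~\ref{LIR.Action.LIO} computes the scalar on the even side only, and you have given no mechanism to relate the scalars for $\pi^+$ and $\pi^-$ to each other. The fallback to the Prasad conjecture in \cite{MR3573972} would be external to the logical framework of this paper.

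The paper's argument bypasses the $\chi_V$-step entirely and provides exactly the missing mechanism. For each irreducible conjugate symplectic $\phi_\rho\subset\phi_0$ (so that the induction $\wt\pi_0=\Ind(\rho\boxtimes\pi_0)$ is irreducible and Lemma~\ref{LIR.Compatibility.1st} gives $\wt\pi_0=\pi(\wt\phi_0,\eta_0)$), one forms the parallel irreducible $\wt\sigma_0=\Ind(\rho\chi_V^{-1}\chi_W\boxtimes\sigma_0)$ on $U(V_{2n_0+2d+1}^{\epsilon'})$ for the \emph{given} $\epsilon'$, and then uses the equivariant map and Proposition~\ref{comp.IO} to obtain $\scrR(\wt\pi_0)=\alpha\beta(0)\scrR(\wt\sigma_0^\vee)$ with $\epsilon^d(\epsilon')^d\alpha\beta(0)=1$. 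Combining Proposition~\ref{LIR.Action.LIO} (which reads off $\scrR(\wt\pi_0)=\epsilon^d\eta_0(a_\rho)$) with the LIR for odd unitary groups (which reads off $\scrR(\wt\sigma_0^\vee)$ in terms of $\eta_{\sigma_0}$) yields $\eta_0(a_\rho)=\eta_{\sigma_0}(a'_\rho)$ for every generator, hence $\eta_0=\eta_{\sigma_0}|_{\calS_{\phi_0}}$. The key point you are missing is that Proposition~\ref{comp.IO} is what bridges the even and odd sides for a fixed but arbitrary $\epsilon'$; once you see this, there is no need to embed into a situation where both signs occur simultaneously.
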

\begin{proof}
We use an argument similar to that of \cite{MR3788848} Section 7.3. Let $\phi_\rho$ be any irreducible conjugate symplectic subrepresentation of $\phi_0$, which correponds to a square-integrable representation $\rho$ of $GL_d(E)$, for some $d\leq 2n_0$. We can write
\begin{equation*}
{W}_{2n_0+2d}^\epsilon=Y_\rho\oplus W_{2n_0}^\epsilon\oplus Y_\rho^*,
\end{equation*}
where $Y_\rho$ and $Y_\rho^*$ are $d$-dimensional totally isotropic subspaces of ${W}_{2n_0+2d}^\epsilon$ such that $Y_\rho\oplus Y_\rho^*$ is non-degenerate and orthogonal to $W_{2n_0}^\epsilon$. Let $\wt Q$ be the maximal parabolic subgroup of $U({W}_{2n_0+2d}^\epsilon)$ stabilizing $Y_\rho$ and $\wt L$ be its Levi component stabilizing $Y_\rho^*$, so that
\begin{equation*}
\wt L\simeq GL(Y_\rho)\times U({W}_{2n_0}^\epsilon).
\end{equation*}
We consider the induced representation $\wt{\pi}_0\coloneqq\Ind_{\wt Q}^{U({W}_{2n_0+2d}^\epsilon)}(\rho\boxtimes\pi_0)$. By Corollary \ref{LIR.Reducibility}, $\wt{\pi}_0$ is irreducible. Moreover, it follows from Proposition \ref{LIR.Parameter} and Lemma \ref{LIR.Compatibility.1st} that
\[
	\wt{\pi}_0=\pi(\wt\phi_0,\eta_0)
\]
is the element in $\Pi_{\wt\phi_0}$ corresponding to $\eta_0$, where
\[
	\wt\phi_0=\phi_\rho\oplus\phi_0\oplus(\phi_\rho^c)^\vee,
\]
and we use the natural isomorphism $\calS_{\phi_0}\simeq\calS_{\wt\phi_0}$ to identify $\calS_{\phi_0}$ and $\calS_{\wt\phi_0}$.\\

Similarly, we can write
\begin{equation*}
{V}_{2n_0+2d+1}^{\epsilon'}=X_\rho\oplus V_{2n_0+1}^{\epsilon'}\oplus X_\rho^*,
\end{equation*}
where $X_\rho$ and $X_\rho^*$ are $d$-dimensional totally isotropic subspaces of ${V}_{2n_0+2d+1}^{\epsilon'}$ such that $X_\rho\oplus X_\rho^*$ is non-degenerate and orthogonal to $V_{2n_0+1}^{\epsilon'}$. Let $\wt P$ be the maximal parabolic subgroup of $U({V}_{2n_0+2d+1}^{\epsilon'})$ stabilizing $X_\rho$ and $\wt M$ be its Levi component stabilizing $X_\rho^*$, so that
\begin{equation*}
\wt M\simeq GL(X_\rho)\times U(V_{2n_0+1}^{\epsilon'}).
\end{equation*}
Set $\wt{\sigma}_0\coloneqq\Ind_{\wt P}^{U({V}_{2n_0+2d+1}^{\epsilon'})}(\rho\chi_V^{-1}\chi_W\boxtimes\sigma_0)$. By the LLC for odd unitary groups, $\wt{\sigma}_0$ is irreducible. Moreover, we have
\[
	\wt{\sigma}_0=\pi(\wt\phi_{\sigma_0},\eta_{\sigma_0})
\]
is the element in $\Pi_{\wt\phi_{\sigma_0}}$ corresponding to $\eta_{\sigma_0}$, where
\[
	\wt\phi_{\sigma_0}=\phi_\rho\chi_V^{-1}\chi_W\oplus\phi_{\sigma_0}\oplus\left((\phi_\rho\chi_V^{-1}\chi_W)^c\right)^\vee,
\]
and we use the natural isomorphism $\calS_{\phi_{\sigma_0}}\simeq\calS_{\wt\phi_{\sigma_0}}$ to identify $\calS_{\phi_{\sigma_0}}$ and $\calS_{\wt\phi_{\sigma_0}}$.\\

Recall that by Proposition \ref{equimap}, there exists a non-zero $U({V}_{2n_0+2d+1}^{\epsilon'})\times U({W}_{2n_0+2d}^\epsilon)$-equivariant epimorphism
\[
	\wt\calT_0:\omega\otimes\wt\sigma_0^\vee\lra\wt\pi_0.
\]
Applying Proposition \ref{comp.IO}, we get
\begin{equation*}
\scrR(\wt\pi_0)=\alpha\cdot\beta(0)\cdot\scrR(\wt\sigma_0^\vee),
\end{equation*}
where $\scrR(\wt\pi_0)$ is the scalar defined by the action of the normalized intertwining operator $R\left(w,\rho\boxtimes\pi_0\right)$ on $\wt\pi_0$, and $\scrR(\wt\sigma_0^\vee)$ is defined similarly. Following the calculation in \cite{MR3573972} Section 8.4, we have
\begin{equation*}
\epsilon^k\cdot(\epsilon')^k\cdot\alpha\cdot\beta(0)=1.
\end{equation*}
Combining these two equalitites, the LLC for odd unitary groups, and Proposition \ref{LIR.Action.LIO}, we get
\[
	\eta_0(a_\rho)=\eta_{\sigma_0}(a'_\rho),
\]
where $a_\rho$ is the element in $\calS_{\phi_0}$ corresponding to $\phi_\rho$, and $a'_\rho$ is the element in $\calS_{\phi_{\sigma_0}}$ corresponding to $\phi_\rho\chi_V^{-1}\chi_W$. Since $\phi_\rho$ is arbitrary, we deduce that
\[
	\eta_0=\eta_{\sigma_0}\big|_{\calS_{\phi_0}}.
\]
This completes the proof.
\end{proof}
Finally we can prove the general case:
\begin{proposition}\label{LIR.Compatibility.Fin}
We have
\[
	\eta\big|_{\calS_{\phi_0}}=\eta_0.
\]
\end{proposition}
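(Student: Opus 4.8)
The plan is to reduce the general case to the already-established Lemma \ref{LIR.Compatibility.1st} via the same "adjoin a block" technique used to prove Corollary \ref{LIR.Compatibility.2nd}. Recall that the obstruction to running the argument of Lemma \ref{LIR.Compatibility.1st} directly is that the key sign equality $\epsilon'_0=\epsilon'$ (namely \eqref{Eqn.dagger}) need not hold when $\calS_{\phi_0}\hookrightarrow\calS_\phi$ is not an isomorphism; this is precisely the situation $\chi_V\subset\phi$ (equivalently $\phi_\tau=\chi_V$), since that is the only way the component group can jump. So I would first dispose of the case where $\phi_\tau$ is not conjugate self-dual of sign $(-1)^{n-1}$ — there $a_\tau$ does not even appear in $\calS_\phi$, the embedding $\calS_{\phi_0}\hookrightarrow\calS_\phi$ is an isomorphism, and Lemma \ref{LIR.Compatibility.1st} already gives the claim. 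Likewise if $\phi_\tau$ is conjugate self-dual but $\phi_\tau\neq\chi_V$, the argument of Lemma \ref{LIR.Compatibility.1st} still applies verbatim because one checks the relevant theta lifts go "up" on both levels, so again $\epsilon'_0=\epsilon'$. Thus the only remaining case is $\phi_\tau=\chi_V$, i.e. $\phi=\chi_V\oplus\phi_0\oplus\chi_V$ with $\chi_V\not\subset\phi_0$.

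In this remaining case I would proceed as follows. Since $\chi_V\not\subset\phi_0$, Corollary \ref{Lift.Up} gives that both $\sigma_0^+\coloneqq\theta_{2n_0+1}^+(\pi_0)$ and $\sigma_0^-\coloneqq\theta_{2n_0+1}^-(\pi_0)$ are non-zero, and by Corollary \ref{LIR.Compatibility.2nd} we have $\eta_0=\eta_{\sigma_0^{\epsilon'}}\big|_{\calS_{\phi_0}}$ for each choice of $\epsilon'\in\{\pm1\}$. By Lemma \ref{Lift.Down} exactly one $\epsilon'$ has $\theta_{2n-1}^{\epsilon'}(\pi)\neq 0$; fix that $\epsilon'$ and set $\sigma\coloneqq\theta_{2n+1}^{\epsilon'}(\pi)$, so that by our construction of $\calJ_\scrW$ in CASE II of Section \ref{LLC.Construction.J} we have $\eta=\eta_\sigma\big|_{\calS_\phi}$ under the identification $\calS_\phi\simeq\calS_{\phi_\sigma}$. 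By Lemma \ref{theta.ind}, $\sigma\subset\Ind_P^{U(V_{2n+1}^{\epsilon'})}(\tau\chi_V^{-1}\chi_W\boxtimes\sigma_0^{\epsilon'})$; here $\tau\chi_V^{-1}\chi_W$ has parameter $\chi_W$, so this is an induction from the parabolic with Levi $GL_1(E)\times U(V_{2n_0+1}^{\epsilon'})$ by the character $\chi_W$ (which already occurs in $\phi_{\sigma_0^{\epsilon'}}$). Now I would invoke the local intertwining relation / multiplicity-one result for odd unitary groups (Theorem \ref{TheoremOdd}, i.e. the LLC for odd unitary groups together with its LIR): the irreducible constituent $\sigma$ of this induced representation has a character $\eta_\sigma$ on $\calS_{\phi_\sigma}$ whose restriction to $\calS_{\phi_{\sigma_0^{\epsilon'}}}$ equals $\eta_{\sigma_0^{\epsilon'}}$. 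Restricting further along $\calS_{\phi_0}\hookrightarrow\calS_{\phi_{\sigma_0^{\epsilon'}}}$ and chasing the commutative square
\begin{equation*}
\begin{CD}
\calS_{\phi_0} @>>> \calS_\phi\\
@VVV @VVV\\
\calS_{\phi_{\sigma_0^{\epsilon'}}} @>>> \calS_{\phi_\sigma}
\end{CD}
\end{equation*}
of natural maps exactly as in Lemma \ref{LIR.Compatibility.1st}, I obtain
\[
\eta\big|_{\calS_{\phi_0}}=\big(\eta_\sigma\big|_{\calS_\phi}\big)\big|_{\calS_{\phi_0}}=\big(\eta_\sigma\big|_{\calS_{\phi_{\sigma_0^{\epsilon'}}}}\big)\big|_{\calS_{\phi_0}}=\eta_{\sigma_0^{\epsilon'}}\big|_{\calS_{\phi_0}}=\eta_0,
\]
the last equality being Corollary \ref{LIR.Compatibility.2nd}.

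The step I expect to be the main obstacle — and the one that forces the detour through Corollary \ref{LIR.Compatibility.2nd} rather than a direct repetition of Lemma \ref{LIR.Compatibility.1st} — is handling the failure of the sign-matching \eqref{Eqn.dagger} in the case $\phi_\tau=\chi_V$: here $\pi_0$ theta-lifts up to \emph{both} towers on the odd side while $\pi$ lifts down, so $\epsilon'_0$ (a choice, since $\sigma_0$ exists on both towers) can be taken equal to $\epsilon'$, and the point is to verify that with this compatible choice the character identity $\eta_0=\eta_{\sigma_0^{\epsilon'}}|_{\calS_{\phi_0}}$ still holds for the \emph{specific} $\epsilon'$ dictated by $\pi$ — which is exactly the content of Corollary \ref{LIR.Compatibility.2nd} (valid for every $\epsilon'$ with $\sigma_0^{\epsilon'}\neq 0$). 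Once that input is in hand, the remainder is a routine diagram chase identical to the one in Lemma \ref{LIR.Compatibility.1st}, together with the bookkeeping of which of the three cases ($\phi_\tau$ not conjugate self-dual of the right sign; $\phi_\tau$ conjugate self-dual with $\phi_\tau\neq\chi_V$; $\phi_\tau=\chi_V$) one is in.
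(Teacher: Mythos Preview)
Your core argument—rerun the diagram chase of Lemma \ref{LIR.Compatibility.1st} but replace the sign equality \eqref{Eqn.dagger} by an appeal to Corollary \ref{LIR.Compatibility.2nd}—is exactly the paper's proof, and the computation you write out in your ``remaining case'' is correct and works verbatim in complete generality.

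There is, however, a genuine slip in your preliminary case analysis. You assert that ``$\chi_V\subset\phi$ (equivalently $\phi_\tau=\chi_V$)'' and that when $\phi_\tau$ is conjugate self-dual with $\phi_\tau\neq\chi_V$ the theta lifts ``go up on both levels''. Neither claim is right: one can have $\chi_V\subset\phi_0$ (hence $\chi_V\subset\phi$) while $\phi_\tau\neq\chi_V$, and in that situation both $\pi_0$ and $\pi$ fall into CASE~II of the construction of $\calJ_\scrW$, so neither lift goes ``up''. In particular, if additionally $\phi_\tau\not\subset\phi_0$ (so $\calS_{\phi_0}\subsetneq\calS_\phi$ and Lemma \ref{LIR.Compatibility.1st} is unavailable), your second bullet does not cover this case and the justification you give for $\epsilon'_0=\epsilon'$ is false. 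Likewise, the component group jumps exactly when $\phi_\tau$ is conjugate symplectic with $\phi_\tau\not\subset\phi_0$; this has nothing to do with whether $\phi_\tau=\chi_V$.

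The fix is simply to discard the case split: the argument you give in your third paragraph applies uniformly. Choose $\epsilon'$ according to the construction of $\calJ_\scrW$ for $\pi$, set $\sigma=\theta_{2n+1}^{\epsilon'}(\pi)$ and $\sigma_0=\theta_{2n_0+1}^{\epsilon'}(\pi_0)$ (nonzero by Lemma \ref{theta.ind}), invoke Corollary \ref{LIR.Compatibility.2nd} to get $\eta_0=\eta_{\sigma_0}|_{\calS_{\phi_0}}$ for \emph{this} $\epsilon'$, and then chase the square exactly as in Lemma \ref{LIR.Compatibility.1st}. That is the paper's (one-sentence) proof.
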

\begin{proof}
The proof is almost the same as that of Lemma \ref{LIR.Compatibility.1st}. Although the equality (\ref{Eqn.dagger}) may not hold anymore without the assumption in Lemma \ref{LIR.Compatibility.1st}, we can still appeal to Corollary \ref{LIR.Compatibility.2nd} to complete the comparison of $\eta\big|_{\calS_{\phi_0}}$ and $\eta_0$.
\end{proof}
Combining Proposition \ref{LIR.Parameter}, Corollary \ref{LIR.Reducibility}, Proposition \ref{LIR.Action.LIO}, and Proposition \ref{LIR.Compatibility.Fin}, we get
\begin{proposition}\label{LIR.Fin}
The LIR holds for the LLC we constructed for even unitary groups.
\end{proposition}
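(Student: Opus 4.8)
The plan is to obtain the LIR, that is, property $(7)$ of Theorem \ref{MainTheorem} for even unitary groups, purely by assembling the three structural facts about the induced representation $\Ind_Q^{U(W_{2n}^\epsilon)}(\tau\boxtimes\pi_0)$ that have just been proved. First, by Proposition \ref{LIR.Parameter} every irreducible constituent of $\Ind_Q^{U(W_{2n}^\epsilon)}(\tau\boxtimes\pi_0)$ has $L$-parameter $\phi$, hence is of the form $\pi(\phi,\eta)$ for some $\eta\in\widehat{\calS_\phi}$, and by Corollary \ref{LIR.Reducibility} this induced representation is semisimple and multiplicity free; so it is the direct sum of the distinct $\pi(\phi,\eta)$ that occur in it. It remains to identify the set of such $\eta$ and to compute the scalar by which $R(w,\tau\boxtimes\pi_0)$ acts on each summand.

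To pin down the index set, I would combine Proposition \ref{LIR.Compatibility.Fin} with a count of characters. By Proposition \ref{LIR.Compatibility.Fin}, every $\eta$ occurring satisfies $\eta\big|_{\calS_{\phi_0}}=\eta_0$, and $[\calS_\phi:\calS_{\phi_0}]$ is $1$ or $2$ (which is precisely the dichotomy of Corollary \ref{LIR.Reducibility}), so the set $S=\{\eta\in\widehat{\calS_\phi}:\eta\big|_{\calS_{\phi_0}}=\eta_0\}$ has the same cardinality. When $\calS_{\phi_0}=\calS_\phi$, Corollary \ref{LIR.Reducibility} gives that $\Ind_Q^{U(W_{2n}^\epsilon)}(\tau\boxtimes\pi_0)$ is irreducible, hence equals the unique $\pi(\phi,\eta)$ with $\eta\in S$; when $\calS_{\phi_0}$ is a proper (index two) subgroup, $\Ind_Q^{U(W_{2n}^\epsilon)}(\tau\boxtimes\pi_0)$ has exactly two inequivalent constituents, both with character in $S$ by Proposition \ref{LIR.Compatibility.Fin}, and since $|S|=2$ and the decomposition is multiplicity free these two constituents are exactly $\pi(\phi,\eta)$ for the two $\eta\in S$. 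This yields the desired decomposition $\Ind_Q^{U(W_{2n}^\epsilon)}(\tau\boxtimes\pi_0)=\bigoplus_{\eta\in S}\pi(\phi,\eta)$. The one step needing genuine care here is this matching: it is the simultaneous use of multiplicity-freeness, of the fact that each constituent's character restricts to $\eta_0$, and of the equality of the number of constituents with $[\calS_\phi:\calS_{\phi_0}]=|S|$ that upgrades a containment into a bijection; none of these three alone suffices.

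For the action of the normalized intertwining operator, assume $\phi_\tau$ is conjugate self-dual (otherwise $w(\tau\boxtimes\pi_0)\not\simeq\tau\boxtimes\pi_0$ and there is nothing to state). Then I would simply quote Proposition \ref{LIR.Action.LIO}, noting the bookkeeping that $n$ is even here, so $(-1)^{n-1}=-1$ and $(-1)^n=+1$; thus ``$\phi_\tau$ conjugate symplectic'' and ``$\phi_\tau$ conjugate orthogonal'' are exactly the sign conditions $(-1)^{n-1}$ and $(-1)^n$ appearing in Theorem \ref{MainTheorem}$(7)$, and the scalars $\epsilon^k\cdot\eta(a_\tau)$ and $\epsilon^k$ agree verbatim. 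Finally I would recall that in the even case the group $U(V^\epsilon)$ of Theorem \ref{MainTheorem} is the group $U(W_{2n}^\epsilon)$ treated here, since scaling an even-dimensional Hermitian form by $\delta$ produces a skew-Hermitian space of the same sign, and the parabolic $P$ of the theorem is the parabolic $Q$ used above; so no reformulation is needed and the argument is complete. The upshot is that Proposition \ref{LIR.Fin} requires no new analysis beyond Sections \ref{LLC.Construction}--\ref{LLC.Construction.J} and the inputs from \cite{MR3573972}; its only content is the combinatorial synthesis described in the second paragraph.
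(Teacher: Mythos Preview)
Your proposal is correct and follows exactly the paper's approach: the paper's proof is literally the single sentence ``Combining Proposition \ref{LIR.Parameter}, Corollary \ref{LIR.Reducibility}, Proposition \ref{LIR.Action.LIO}, and Proposition \ref{LIR.Compatibility.Fin}, we get'', and you have simply made explicit the counting argument that turns those four inputs into the decomposition $\Ind_Q^{U(W_{2n}^\epsilon)}(\tau\boxtimes\pi_0)=\bigoplus_{\eta|_{\calS_{\phi_0}}=\eta_0}\pi(\phi,\eta)$ together with the intertwining-operator scalar. Your attention to the bookkeeping (the sign dictionary $(-1)^{n-1}$ vs.\ conjugate symplectic, and the passage from $\varepsilon$-Hermitian $V^\epsilon$ to skew-Hermitian $W_{2n}^\epsilon$) is appropriate and accurate.
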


\section{Completion of the proof}
Now we are equipped with enough powerful arms and able to complete the proof of our main result Theorem \ref{MainTheorem}. In this section, to simplify notations, we let $V^\epsilon$ be the $(2n+1)$-dimensional Hermitian space over $E$ with sign $\epsilon$, and $U(V^\epsilon)$ be the unitary group associated to $V^\epsilon$. Similarly, we let $W^\epsilon$ be the $2n$-dimensional skew-Hermitian space over $E$ with sign $\epsilon$, and $U(W^\epsilon)$ be the unitary group associated to $W^\epsilon$. The idea of many proofs in this section comes from \cite{MR3788848} Section 7.3.
\subsection{Comparison with LLC \`a la Mok}
In this subsection, we compare the LLC for even unitary groups constructed in Section \ref{LLC.Construction} with the LLC for quasi-split unitary groups constructed by Mok in \cite{MR3338302}.\\

Fix a Whittaker datum $\scrW$ of $U(W^+)$. Let $\pi$ be an irreducible smooth representation of $U(W^+)$. Recall that in Section \ref{LLC.Construction}, we associted a pair
\[
	(\phi=\calL(\pi),\eta=\calJ_\scrW(\pi))
\]
to $\pi$. Also, in \cite{MR3338302}, Mok associated a pair
\[
	(\phi^M=\calL^+(\pi),\eta^M=\calJ_\scrW^+(\pi))
\]
to $\pi$. Moreover, the LLC for quasi-split unitary groups constructed by Mok satisfies all properties listed in Theorem \ref{MainTheorem}. 
\begin{theorem}
We have
\[
	\phi=\phi^M\quad\textit{and}\quad\eta=\eta^M.
\]
\end{theorem}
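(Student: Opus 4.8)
The plan is to compare the two classifications by exploiting the characterizing properties both satisfy. Since Mok's LLC for quasi-split unitary groups satisfies all the properties listed in Theorem~\ref{MainTheorem} (this is Theorem~\ref{LLC.QS.Mok}), and we have verified in Sections~\ref{LLC.Construction} and~\ref{weilrep} that our $\calL$ and $\calJ_\scrW$ also satisfy those properties (most importantly the LIR, Proposition~\ref{LIR.Fin}, together with preservation of square-integrability, temperedness, compatibility with Langlands quotients, and the respecting of standard $\gamma$-factors and Plancherel measures), the strategy is to show that these properties pin down the correspondence uniquely on $\Irr U(W^+)$, and hence force $\phi=\phi^M$ and $\eta=\eta^M$.

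First I would treat the equality of $L$-parameters $\phi=\phi^M$. By Proposition~\ref{PreserveLocalFactor.Fin}, both $\calL$ and $\calL^+$ respect standard $\gamma$-factors; that is, for every character $\chi$ of $E^\times$ one has $\gamma(s,\pi,\chi,\psi)=\gamma(s,\phi\chi,\psi_E)=\gamma(s,\phi^M\chi,\psi_E)$. The local converse theorem for $GL_n$ (the multiplicativity and the fact that twisted $\gamma$-factors of a representation of $WD_E$ determine the representation) then gives $\phi=\phi^M$ directly. This handles property~(3)-type reasoning and settles the first assertion without any reference to the internal structure of the packet.

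Next I would treat $\eta=\eta^M$. Write $\phi=\calL(\pi)$ and decompose $\phi=\bigoplus_j \phi_j$ into irreducibles; it suffices to show $\eta(a_j)=\eta^M(a_j)$ for each basis element $a_j$ of $\calS_\phi$, i.e.\ for each conjugate self-dual $\phi_j$ of sign $(-1)^{n-1}$. The standard device is to reduce to the tempered case by compatibility with Langlands quotients (property~(8)), which both classifications satisfy, so $\eta$ and $\eta^M$ are read off from the tempered piece $\pi_0$ of the standard module; thus we may assume $\pi$ is tempered. For tempered $\pi$, if $\phi$ itself is not square-integrable we peel off a summand $\phi_\tau$ via the LIR: realize $\pi$ (or rather a representation with the enlarged parameter) inside an induced representation $\Ind_Q(\tau\boxtimes\pi_0)$ and use that the normalized intertwining operator $R(w,\tau\boxtimes\pi_0)$ acts by $\epsilon^k\cdot\eta(a_\tau)$ on $\pi(\phi,\eta)$ in both theories (Proposition~\ref{LIR.Action.LIO} for ours, and the corresponding statement in \cite{MR3338302}, which agree because the normalized operators coincide on quasi-split groups by Remark~\ref{IO.Change.Whit.Data}). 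This reads off $\eta(a_\tau)=\eta^M(a_\tau)$ for the peeled summand, and induction on the number of distinct summands reduces to the case $\phi$ square-integrable. In the square-integrable base case, one invokes property~(6): there is a unique $\scrW$-generic member in the packet, and it corresponds to the trivial character in both normalizations; since the two packets coincide as sets (they have the same parameter $\phi$ and the same cardinality $|\widehat{\calS_\phi}|$ by Corollary~\ref{Size.Packet} and Theorem~\ref{LLC.QS.Mok}) and the bijections to $\widehat{\calS_\phi}$ are determined by their behaviour under the intertwining operators plus the genericity normalization, we conclude $\calJ_\scrW=\calJ_\scrW^+$, i.e.\ $\eta=\eta^M$.

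The main obstacle I expect is the square-integrable base case: the LIR peeling argument only determines $\eta$ on the subgroup $\calS_{\phi_0}$ generated by the peeled-off summands, so one must argue that the genericity property~(6) together with the fact that the two bijections are both normalized by the same Whittaker datum is enough to rigidify the remaining ambiguity — concretely, that any discrete-series packet admits a chain of parabolic inductions (or an auxiliary embedding into a larger group, as in Corollary~\ref{LIR.Compatibility.2nd} and the argument of \cite{MR3788848} Section~7.3) detecting every basis element $a_j$. Carrying this out requires the same bookkeeping as in Section~\ref{LLC.Construction.J}, comparing the two $\theta$-compatible normalizations against Mok's, but no new ingredient beyond what has already been established.
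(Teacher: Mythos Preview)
Your overall strategy is sound, but there are two points where the paper's approach is cleaner and one place where your argument has a genuine gap.

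\textbf{On $\phi=\phi^M$.} You invoke a ``local converse theorem'' asserting that character-twisted $\gamma$-factors determine the $L$-parameter. This is not true in general: twists by characters of $E^\times$ alone do not suffice to pin down an $n$-dimensional representation of $WD_E$ when $n>2$; one needs twists by higher-rank discrete series (this is precisely why the paper uses Plancherel measures and \cite{MR3573972} Lemma~A.6 in Corollary~\ref{ParameterIndependent}, not just the standard $\gamma$-factors). More to the point, you are re-proving something already established: the equality $\calL(\pi)=\calL^+(\pi)$ for tempered $\pi\in\Irr U(W_{2n}^+)$ is exactly Lemma~\ref{WeakPrasadConj.QS}, so after the Langlands-quotient reduction there is nothing left to do.

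\textbf{On $\eta=\eta^M$.} Your ``peel off and induct down to the discrete base case'' scheme works but is unnecessarily elaborate, and you correctly flag that the base case is the sticking point. The paper avoids this entirely by going \emph{up} rather than down, exactly as in the proof of Corollary~\ref{LIR.Compatibility.2nd}: for each basis element $a_j\in\calS_\phi$ corresponding to an irreducible conjugate symplectic summand $\phi_j\subset\phi$, form the induced representation $\Ind_{\wt Q}^{U(\wt W)}(\rho_j\boxtimes\pi)$ on a \emph{larger} group $U(\wt W)$. This is irreducible (since $\calS_{\phi}\simeq\calS_{\wt\phi}$), and the normalized intertwining operator $R(w,\rho_j\boxtimes\pi)$ acts on it by the scalar $\epsilon^{k_j}\cdot\eta(a_j)$ in our LLC and by $\epsilon^{k_j}\cdot\eta^M(a_j)$ in Mok's LLC. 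Since the operator itself is the same (Remark~\ref{IO.Change.Whit.Data}), one reads off $\eta(a_j)=\eta^M(a_j)$ directly, for every $j$, with no induction and no separate treatment of the square-integrable case. You allude to this device only as a fix for the ``obstacle''; in fact it is the whole argument.
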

\begin{proof}
Since both two LLC are compatible with Langlands quotients, without loss of generality, we may assume that $\pi$ is tempered. Then the desired conclusion follows from Proposition \ref{WeakPrasadConj.QS} and the same argument as that of Corollary \ref{LIR.Compatibility.2nd}.
\end{proof}
\begin{remark}
Similarly, one can easily show that the bijection $\calJ_\scrW$ constructed in Section \ref{LLC.Construction.J} is independent of the choice of the datum $\underline{\psi}=(\psi,\chi_V,\chi_W,\delta)$, but only depends on the choice of the Whittaker datum $\scrW$.
\end{remark}
As a consequence of this comparison, we deduce
\begin{proposition}
The LLC we constructed for even unitary groups satisfies following properties:
\begin{enumerate}
\item Let $\pi=\pi(\phi,\eta)$ be the element in $\Pi_\phi$ corresponding to $\eta$. Then $\pi$ is a representation of $U(W^\epsilon)$ if and only if $\eta(z_\phi)=\epsilon$.
\item Assume that $\phi$ is a tempered $L$-parameter, then there is an unique $\scrW$-generic representation of $U(W^+)$ in $\Pi_\phi$ corresponding to the trivial character of $\calS_\phi$.
\end{enumerate}
\end{proposition}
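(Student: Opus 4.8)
The plan is to deduce both statements directly from the comparison theorem just established --- which identifies our correspondence on $U(W^+)$ with Mok's --- together with the bijectivity of $\calJ_\scrW$ obtained in the construction of $\calJ_\scrW$ (ultimately Corollary \ref{Size.Packet}) and the properties of Mok's LLC recorded in Theorem \ref{LLC.QS.Mok}. No genuinely new ingredient is needed; the content of the proof is bookkeeping.

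For property (1) I would first reduce to the case that $\phi$ is tempered. For a general $\phi$ with tempered part $\phi_0$ and Langlands data $(\tau_i,s_i)$, the standard module of which $\pi(\phi,\eta)$ is the Langlands quotient is induced from a Levi $\prod_i GL_{k_i}(E)\times U(W_{2n-2k}^\epsilon)$ with the \emph{same} sign $\epsilon$, so $\pi(\phi,\eta)\in\Irr U(W^\epsilon)$ if and only if $\pi_0=\pi(\phi_0,\eta_0)\in\Irr U(W_{2n-2k}^\epsilon)$; since the identification $\calS_\phi\simeq\calS_{\phi_0}$ carries $z_\phi$ to $z_{\phi_0}$ and $\eta$ to $\eta_0$, the tempered case gives the general one. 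For tempered $\phi$: by the comparison theorem $\calL$ restricted to $\Irr U(W^+)$ agrees with $\calL^+$, so $\Pi_\phi^+$ is precisely Mok's quasi-split packet and $\calJ_\scrW|_{\Pi_\phi^+}=\calJ_\scrW^+$. Since $\calJ_\scrW^+$ is a bijection onto $\widehat{\overline{\calS_\phi}}$, which under the canonical inclusion $\widehat{\overline{\calS_\phi}}\hookrightarrow\widehat{\calS_\phi}$ is exactly the subset $\{\eta\in\widehat{\calS_\phi}:\eta(z_\phi)=1\}$, we get $\calJ_\scrW(\Pi_\phi^+)=\{\eta:\eta(z_\phi)=1\}$. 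But $\calJ_\scrW:\Pi_\phi\to\widehat{\calS_\phi}$ is a bijection and $\Pi_\phi=\Pi_\phi^+\sqcup\Pi_\phi^-$, so necessarily $\calJ_\scrW(\Pi_\phi^-)=\{\eta:\eta(z_\phi)=-1\}$. As $\epsilon(W^+)=+1$ and $\epsilon(W^-)=-1$, this is exactly the assertion that $\pi(\phi,\eta)$ is a representation of $U(W^\epsilon)$ if and only if $\eta(z_\phi)=\epsilon$.

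For property (2), fix a tempered $\phi$ and let $\mathbbm{1}$ denote the trivial character of $\calS_\phi$. Since $\mathbbm{1}(z_\phi)=1$, property (1) places $\pi(\phi,\mathbbm{1})$ in $\Pi_\phi^+\subset\Irr U(W^+)$, and by the comparison theorem it coincides with the member of Mok's packet attached to the trivial character; by property (6) of Theorem \ref{MainTheorem}, which Mok's LLC satisfies according to Theorem \ref{LLC.QS.Mok}, this is the unique $\scrW$-generic member of $\Pi_\phi^+$. Every remaining element of $\Pi_\phi$ lies in $\Pi_\phi^-\subset\Irr U(W^-)$, and since $U(W^-)$ is not quasi-split it admits no Whittaker datum, so no such representation is $\scrW$-generic. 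Hence $\pi(\phi,\mathbbm{1})$ is the unique $\scrW$-generic representation in $\Pi_\phi$, and it corresponds to the trivial character of $\calS_\phi$.

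The only point demanding any care is the non-quasi-split side: we have no intrinsic description of $\Pi_\phi^-$, nor of $\calJ_\scrW$ restricted to it, so the value $\eta(z_\phi)=-1$ there must be forced purely by set-theoretic complementation against the quasi-split part. Thus the whole argument rests on two facts proved elsewhere in the paper: the comparison theorem (which pins down $\Pi_\phi^+$ together with $\calJ_\scrW|_{\Pi_\phi^+}$) and the cardinality equality $|\Pi_\phi|=|\widehat{\calS_\phi}|$ of Corollary \ref{Size.Packet} (which makes $\calJ_\scrW$ a bijection). One should also double-check the standard identification of $\widehat{\overline{\calS_\phi}}$ with $\{\eta:\eta(z_\phi)=1\}$ inside $\widehat{\calS_\phi}$, and, in the non-tempered reduction, that $z_\phi$ matches $z_{\phi_0}$ under $\calS_\phi\simeq\calS_{\phi_0}$; both are routine.
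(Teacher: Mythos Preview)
Your proposal is correct and takes essentially the same approach as the paper, which simply states the proposition as an immediate consequence of the comparison theorem with Mok's LLC without spelling out the details. Your write-up makes explicit the bookkeeping (the reduction to tempered parameters, the identification $\widehat{\overline{\calS_\phi}}=\{\eta:\eta(z_\phi)=1\}$, and the set-theoretic complementation forcing $\eta(z_\phi)=-1$ on $\Pi_\phi^-$) that the paper leaves to the reader.
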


\subsection{Twisting by characters}
In this subsection, we prove a formula which concerns the behavior of the LLC we constructed with respect to twisting by characters.\\

Let $\pi=\pi(\phi,\eta)$ be the representation of $U(W^\epsilon)$ in $\Pi_\phi$ corresponding to $\eta$, where $\epsilon=\eta(z_\phi)$. Let $\chi$ be a character of $E^1$, and let $\widetilde{\chi}$ to be the base change of $\chi$, i.e. the pull-back of $\chi$ along
\begin{align*}
E^\times&\rightarrow E^1\\
x&\mapsto x/c(x).
\end{align*}
Let $\pi\chi\coloneqq\pi\otimes(\chi\circ\det)$. Denote by $\phi_\chi$ the $L$-parameter of $\pi\chi$.
\begin{lemma}\label{Twist.Character.Parameter}
We have $\phi_\chi=\phi\cdot\widetilde{\chi}$.
\end{lemma}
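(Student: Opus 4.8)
The plan is to reduce to the tempered case and then transport the twisting statement across the theta correspondence, using the corresponding (already known) twisting property for odd unitary groups together with the explicit description of $\calL$ from Section \ref{LLC.Construction}. First I would note that twisting by $\chi\circ\det$ commutes with parabolic induction and with passing to Langlands quotients: if $\pi$ is the Langlands quotient of $\Ind_P^{U(W^\epsilon)}(\tau_1|\det|^{s_1}\boxtimes\cdots\boxtimes\tau_r|\det|^{s_r}\boxtimes\pi_0)$, then $\pi\chi$ is the Langlands quotient of the standard module obtained by twisting each $\tau_i$ by $\widetilde\chi$ (note $\chi\circ\det$ restricted to $GL_{k_i}(E)$ is $\widetilde\chi\circ\det$ up to the relation between $\chi$ and $\widetilde\chi$) and twisting $\pi_0$ by $\chi\circ\det$. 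Since on the $GL$-factors the local Langlands correspondence is known to respect character twists, the non-tempered case follows from the tempered case via the compatibility of $\calL$ with Langlands quotients established in Section \ref{LLC.Construction}. So it suffices to treat $\pi$ tempered.

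For $\pi\in\Irrt U(W_{2n}^\epsilon)$, I would use the construction of $\calL$ via theta lifts. Pick $\epsilon'$ so that $\sigma\coloneqq\theta_{2n+1}^{\epsilon'}(\pi)\neq 0$ (one of the two lifts to $V_{2n\pm1}^{\pm}$ is nonzero by the conservation relation). The key input is the behavior of theta lifts under twisting: tensoring $\pi$ by $\chi\circ\det$ corresponds, on the level of Weil representations, to replacing the splitting datum $\underline\psi=(\psi,\chi_V,\chi_W,\delta)$ by $(\psi,\chi_V,\chi_W\widetilde\chi,\delta)$ — equivalently $\theta_{\underline\psi,V,W}(\pi\chi)=\theta_{\underline\psi',V,W}(\pi)\otimes(\text{character})$, and more precisely the theta lift intertwines the twist on $U(W)$ with a twist on $U(V)$ governed by $\widetilde\chi$. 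Concretely, one gets $\theta_{2n+1}^{\epsilon'}(\pi\chi)=\sigma\cdot\widetilde\chi$ (the representation $\sigma\otimes(\widetilde\chi\circ\det)$, which is nonzero). Now apply the twisting property of the LLC for odd unitary groups (Theorem \ref{TheoremOdd}, which inherits property (9) from Mok's Theorem \ref{LLC.QS.Mok} via $\calL^+$): the parameter of $\sigma\widetilde\chi$ is $\phi_\sigma\cdot\widetilde\chi$. Feeding this through the defining formula $\phi_\pi=(\phi_\sigma-\chi_W)\chi_W^{-1}\chi_V$ or $\phi_\pi=\phi_\sigma\chi_W^{-1}\chi_V\oplus\chi_V$ (depending on the case), and noting that the extra factors $\chi_V$, $\chi_W$ are unaffected while $\phi_\sigma$ is replaced by $\phi_\sigma\widetilde\chi$, yields $\phi_{\pi\chi}=\phi_\pi\cdot\widetilde\chi$, as desired. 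The $\Nm_{E/F}(E^\times)$-orbit ambiguity in $\underline\psi$ is harmless since $\calL$ is independent of $\underline\psi$ by Corollary \ref{ParameterIndependent}.

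The main obstacle I anticipate is pinning down precisely how twisting $\pi$ by $\chi\circ\det$ interacts with the theta correspondence — i.e. establishing cleanly that $\theta_{\underline\psi}(\pi\otimes(\chi\circ\det))\cong\theta_{\underline\psi}(\pi)\otimes(\widetilde\chi\circ\det)$ (or the appropriate variant matching the two unitary groups). This requires unwinding the splitting of the metaplectic cover in terms of the data $(\chi_V,\chi_W)$ and checking that changing $\chi_W$ to $\chi_W\widetilde\chi$ is exactly implemented by the twist; one must be careful with which of $\chi_V,\chi_W$ absorbs the twist and with the base-change relation between $\chi$ on $E^1$ and $\widetilde\chi$ on $E^\times$. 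Once this compatibility is in hand, the rest is a bookkeeping exercise with the explicit formulas for $\calL$, and the character-matching statement $\eta_{\pi\chi}=\eta$ (the second half of Theorem \ref{MainTheorem}(9), to be proved separately) would follow by the same transport argument applied to $\calJ_\scrW$.
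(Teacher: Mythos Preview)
Your strategy—reduce to the tempered case and transport the twist across the theta correspondence—is viable and genuinely different from the paper's proof, which never touches the Weil representation here but instead uses the LIR (Proposition~\ref{LIR.Fin}): for square-integrable $\pi$ the paper writes $\phi=\bigoplus_i\phi_i$ with each $\phi_i$ irreducible conjugate symplectic, observes that the irreducibility of $\Ind(\rho_i\boxtimes\pi)$ (from LIR) survives the twist to $\Ind(\rho_i\widetilde{\chi}\boxtimes\pi\chi)$, and then applies LIR to $\pi\chi$ to force $\phi_i\widetilde{\chi}\subset\phi_\chi$ for every $i$, whence $\phi_\chi=\phi\widetilde{\chi}$ by dimension; the tempered and general cases follow by induction in stages and Langlands quotients.

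However, your execution has a concrete error. The claim $\theta_{2n+1}^{\epsilon'}(\pi\chi)=\sigma\cdot\widetilde{\chi}$ with the \emph{same} datum $\underline{\psi}$ is false, and the bookkeeping that follows does not produce $\phi\widetilde{\chi}$: your recipe gives $(\phi_\sigma\widetilde{\chi}-\chi_W)\chi_W^{-1}\chi_V$, which fails already because $\chi_W\not\subset\phi_\sigma\widetilde{\chi}$ in general, and even formally differs from $\phi\widetilde{\chi}$ by the term $\chi_V-\chi_V\widetilde{\chi}$. The source of the confusion is that it is $\chi_V$, not $\chi_W$, that governs the splitting over $U(W)$. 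Setting $\underline{\psi}^*=(\psi,\chi_V\widetilde{\chi},\chi_W,\delta)$ one has $\omega_{\underline{\psi}^*}\cong\omega_{\underline{\psi}}\otimes\big(1\boxtimes(\chi\circ\det)\big)$, hence $\theta_{\underline{\psi}^*,2n+1}^{\epsilon'}(\pi\chi)=\sigma$, the \emph{same} $\sigma$, untwisted. Computing $\calL_{\underline{\psi}^*}(\pi\chi)$ via the defining formula---now with $\chi_V\widetilde{\chi}$ in place of $\chi_V$---gives $(\phi_\sigma-\chi_W)\chi_W^{-1}(\chi_V\widetilde{\chi})=\phi\widetilde{\chi}$ in Case~I (Case~II is similar), and Corollary~\ref{ParameterIndependent} finishes. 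So your approach works once you swap the roles of $\chi_V$ and $\chi_W$ and recognize that it is the \emph{datum}, not the lift $\sigma$, that absorbs the twist.
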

\begin{proof}
We first assume that $\pi$ is square-integrable. Then $\pi\chi$ is also square-integrable. By Proposition \ref{Preserve.S-I}, we can write
\[
	\phi=\sum_i\phi_i	
\]
with pairwise inequivalent irreducible conjugate symplectic representation $\phi_i$ of $WD_E$. For each $i$, we may regard $\phi_i$ as an $L$-parameter of $GL_{k_i}(E)$, where $k_i=\dim \phi_i$. We denote by $\rho_i$ the irreducible square-integrable representation of $GL_{k_i}(E)$ corresponding to $\phi_i$. Let $\widetilde{W}_{\phi_i}=W^\epsilon\oplus\calH^{k_i}$, where $\calH$ is the (skew-Hermitian) hyperbolic plane. We can decompose $\widetilde{W}_{\phi_i}$ as follows
\begin{equation*}
\widetilde{W}_{\phi_i}=Y_{\phi_i}\oplus W^\epsilon\oplus Y_{\phi_i}^*,
\end{equation*}
where $Y_{\phi_i}$ and $Y_{\phi_i}^*$ are $k_i$-dimensional totally isotropic subspaces of $\widetilde{W}_{\phi_i}$ such that $Y_{\phi_i}\oplus Y_{\phi_i}^*\simeq\calH^{k_i}$ and orthogonal to $W^\epsilon$. Let $\wt Q_{\phi_i}$ be the maximal parabolic subgroup of $U(\wt{W}_{\phi_i})$ stabilizing $Y_{\phi_i}$ and $\wt L_{\phi_i}$ be its Levi component stabilizing $Y_{\phi_i}^*$, so that
\begin{equation*}
\wt L_{\phi_i}\simeq GL(Y_{\phi_i})\times U(W^\epsilon).
\end{equation*}
Consider the induced representation
\[
	\Ind_{\wt Q_{\phi_i}}^{U(\wt{W}_{\phi_i})}\left(\rho_i\wt\chi\boxtimes\pi\chi\right)\simeq\Ind_{\wt Q_{\phi_i}}^{U(\wt{W}_{\phi_i})}\left(\rho_i\boxtimes\pi\right)\otimes(\chi\circ\det).
\]
By Proposition \ref{LIR.Fin}, $\Ind_{\wt Q_{\phi_i}}^{U(\wt{W}_{\phi_i})}\left(\rho_i\boxtimes\pi\right)$ is irreducible. Hence the induced representation $\Ind_{\wt Q_{\phi_i}}^{U(\wt{W}_{\phi_i})}\left(\rho_i\wt\chi\boxtimes\pi\chi\right)$ is also irreducible. Again by Proposition \ref{LIR.Fin}, it follows that
\[
	\phi_i\cdot\wt\chi\subset\phi_\chi.
\]
This containment holds for all $i$. Therefore we must have
\[
	\phi_\chi=\sum\phi_i\cdot\wt\chi=\phi\cdot\wt\chi.
\]

When $\pi$ is tempered but not square-integrable, the lemma follows from Proposition \ref{LIR.Fin} and induction in stages. In the general case, the lemma follows from the compatibility of the LLC with Langlands quotients.
\end{proof}

Next we consider the character $\eta_{\pi\chi}$ of $\calS_{\phi_\chi}$ associated to $\pi\chi$.
\begin{lemma}
If we use the natural isomorphism $\calS_\phi\simeq\calS_{\phi_\chi}$ to identify them, then we have 
\[
	\eta_{\pi\chi}=\eta.
\]
\end{lemma}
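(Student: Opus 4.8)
The plan is to reduce the statement about the character $\eta_{\pi\chi}$ to the analogous statement for odd unitary groups, exactly as in the proof of Lemma \ref{Twist.Character.Parameter} and using the construction of $\calJ_\scrW$ via theta lifts. The key observation is that twisting by $\chi\circ\det$ interacts well with the theta correspondence: if $\pi$ is a representation of $U(W^\epsilon)$ and $\sigma = \theta_{2n+1}^{\epsilon'}(\pi)$ is its (nonzero) theta lift to an odd unitary group computed with the datum $\underline\psi = (\psi,\chi_V,\chi_W,\delta)$, then $\pi\chi = \pi\otimes(\chi\circ\det)$ has theta lift $\theta_{2n+1}^{\epsilon'}(\pi\chi) = \sigma\otimes(\chi\circ\det)$ computed with the twisted datum $\underline{\psi}' = (\psi, \chi_V\wt\chi, \chi_W\wt\chi, \delta)$ — this is because the Weil representation $\omega_{\underline\psi,V,W}$ for the twisted splitting characters differs from $\omega_{\underline\psi,V,W}$ precisely by the character $(\wt\chi\circ\det_V)\otimes(\wt\chi\circ\det_W)$ of $U(V)\times U(W)$ (up to the base-change normalization). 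Since the construction of $\calJ_\scrW$ is independent of $\underline\psi$ (as remarked after the comparison theorem), we may freely pass between $\underline\psi$ and $\underline{\psi}'$.

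First I would recall the corresponding fact for odd unitary groups: if $\sigma = \pi(\phi_\sigma,\eta_\sigma)$ is an irreducible representation of an odd unitary group and $\chi$ is a character of $E^1$, then $\sigma\otimes(\chi\circ\det)$ has parameter $\phi_\sigma\cdot\wt\chi$ and the \emph{same} associated character $\eta_\sigma$ under the identification $\calS_{\phi_\sigma}\simeq\calS_{\phi_\sigma\wt\chi}$ induced by $\phi_i\mapsto\phi_i\wt\chi$. This is property (9) of Theorem \ref{MainTheorem} for odd unitary groups, which holds by Theorem \ref{TheoremOdd} (and ultimately by Mok's results together with the identification $\id^*$, since the character $\eta_-$ appearing in the odd case is visibly unchanged under twisting because $\dim\phi^a = \dim(\phi\wt\chi)^a$). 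Next I would compute, using the construction of $\calJ_\scrW$ in Section \ref{LLC.Construction.J}: with $\sigma = \theta_{2n+1}^{\epsilon'}(\pi)$ (the nonzero lift, with respect to $\underline\psi$), we have $\eta = \eta_\sigma\big|_{\calS_\phi}$, where $\calS_\phi\hookrightarrow\calS_{\phi_\sigma}$ (an isomorphism in CASE II, or the inclusion of an index-two subgroup in CASE I). On the twisted side, $\theta_{2n+1}^{\epsilon'}(\pi\chi)$ computed with $\underline{\psi}'$ equals $\sigma\otimes(\chi\circ\det)$, which is nonzero and has parameter $\phi_\sigma\wt\chi$; its associated character is $\eta_\sigma$. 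Hence $\eta_{\pi\chi} = \eta_{\sigma\otimes(\chi\circ\det)}\big|_{\calS_{\phi_\chi}} = \eta_\sigma\big|_{\calS_{\phi_\chi}}$, and under the identifications $\calS_\phi\simeq\calS_{\phi_\chi}$ and $\calS_{\phi_\sigma}\simeq\calS_{\phi_\sigma\wt\chi}$ (both by $\phi_i\mapsto\phi_i\wt\chi$), this agrees with $\eta_\sigma\big|_{\calS_\phi} = \eta$.

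The main thing to check carefully — and the step I expect to be the real obstacle — is the bookkeeping of \emph{which} datum $\underline\psi$ is used where, and the compatibility of the various identifications of component groups: the identification $\calS_\phi\simeq\calS_{\phi_\chi}$ must be the one induced by twisting each summand by $\wt\chi$, and one must verify that this is compatible (via the commutative square of natural embeddings into the odd-unitary component groups) with the corresponding identification $\calS_{\phi_\sigma}\simeq\calS_{\phi_\sigma\wt\chi}$. One also needs the fact that the nonzero theta lift for $\pi$ with datum $\underline\psi$ lands on the same tower (same $\epsilon'$) as the nonzero theta lift for $\pi\chi$ with datum $\underline{\psi}'$ — this is automatic since $\theta^{\epsilon'}_{2n+1}(\pi\chi)$ computed with $\underline{\psi}'$ literally equals $\theta^{\epsilon'}_{2n+1}(\pi)$ (computed with $\underline\psi$) tensored by $\chi\circ\det$, so nonvanishing is preserved tower-by-tower. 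Finally, for non-tempered $\pi$ one reduces to the tempered case via compatibility with Langlands quotients (both $\phi$ and $\eta$ are read off from the tempered $\pi_0$ in the Langlands data, and $\pi\chi$ has Langlands data obtained by twisting each $\tau_i$ and $\pi_0$ by $\chi\circ\det$), and for tempered but not square-integrable $\pi$ one uses the LIR (Proposition \ref{LIR.Fin}) and induction in stages, exactly as in Lemma \ref{Twist.Character.Parameter}. I would present the square-integrable case in detail and indicate the reductions briefly.
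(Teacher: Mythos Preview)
Your approach is correct but takes a genuinely different route from the paper's. The paper does \emph{not} go back through the theta-lift construction of $\calJ_\scrW$; instead it argues via the LIR (Proposition \ref{LIR.Fin}) exactly in the style of Corollary \ref{LIR.Compatibility.2nd}. Concretely, for each test element $a_{\rho_i}\in\calS_\phi$ the paper compares the normalized intertwining operators $R(w,\rho_i\boxtimes\pi)$ and $R(w,\rho_i\wt\chi\boxtimes\pi\chi)$ on the two induced representations $\Ind_{\wt Q_{\phi_i}}^{U(\wt W_{\phi_i})}(\rho_i\boxtimes\pi)$ and $\Ind_{\wt Q_{\phi_i}}^{U(\wt W_{\phi_i})}(\rho_i\wt\chi\boxtimes\pi\chi)$, which are intertwined by the explicit map $\calF(\varPhi)(g)=\chi(\det g)\varPhi(g)$; an elementary commutative-diagram check shows the two operators agree, and the LIR then forces $\eta(a_{\rho_i})=\eta_{\pi\chi}(a_{\rho_i})$.

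Your argument instead exploits that replacing the splitting pair $(\chi_V,\chi_W)$ by $(\chi_V\wt\chi,\chi_W\wt\chi)$ twists the Weil representation by $(\chi\circ\det_V)\otimes(\chi\circ\det_W)$, so that $\theta_{2n+1}^{\epsilon'}(\pi\chi)$ computed with the new datum is $\sigma\otimes(\chi\circ\det)$; then property (9) for odd unitary groups and the independence of $\calJ_\scrW$ from $\underline\psi$ finish the job. This is a cleaner and more conceptual proof \emph{given} the definition of $\calJ_\scrW$ via theta lifts, and it is very much in the spirit of the paper's later proof of Proposition \ref{LLC.Contragredient}. The paper's route has the advantage of being entirely internal to the LIR framework it has already set up, and avoids quoting the (standard but not stated in the paper) formula for how the Kudla splitting changes under twisting the pair $(\chi_V,\chi_W)$. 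One small correction to your outline: once you are in the tempered case, your theta-lift argument works directly for all tempered $\pi$ (that is exactly how $\calJ_\scrW$ is defined), so the further ``reduction to square-integrable via LIR and induction in stages'' you mention is unnecessary here --- that step was specific to the parameter computation in Lemma \ref{Twist.Character.Parameter}.
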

\begin{proof}
Since the LLC we constructed for even unitary groups is compatible with Langlands quotients, without loss of generality, we may assume that $\pi$ is tempered.\\

Similar to the proof of Lemma \ref{Twist.Character.Parameter}, given any irreducible conjugate symplectic representation $\phi_i$ of $WD_E$, we define $\widetilde{W}_{\phi_i}$, $\wt Q_{\phi_i}$ and $\wt L_{\phi_i}$. Consider two induced representations
\[
	\Ind_{\wt Q_{\phi_i}}^{U(\wt{W}_{\phi_i})}\left(\rho_i\boxtimes\pi\right)\quad \textit{and} \quad\Ind_{\wt Q_{\phi_i}}^{U(\wt{W}_{\phi_i})}\left(\rho_i\wt\chi\boxtimes\pi\chi\right).
\]
Then the LIR asserts that it is sufficient to prove the equality
\[
	R(w,\rho_i\boxtimes\pi)=R(w,\rho_i\wt\chi\boxtimes\pi\chi),
\]
where $R(w,\rho_i\boxtimes\pi)$ and $R(w,\rho_i\wt\chi\boxtimes\pi\chi)$ are intertwining operators defined in Section \ref{IO}. Consider the following commutative diagram
\begin{equation*}
\begin{CD}
\Ind_{\wt Q_{\phi_i}}^{U(\wt{W}_{\phi_i})}\left(\rho_i\boxtimes\pi\right)\otimes(\chi\circ\det) @>{\calF}>> \Ind_{\wt Q_{\phi_i}}^{U(\wt{W}_{\phi_i})}\left(\rho_i\wt\chi\boxtimes\pi\chi\right)\\
@V{R(w,\rho_i\boxtimes\pi)\otimes1}VV @VV{R(w,\rho_i\wt\chi\boxtimes\pi\chi)}V\\
\Ind_{\wt Q_{\phi_i}}^{U(\wt{W}_{\phi_i})}\left(\rho_i\boxtimes\pi\right)\otimes(\chi\circ\det) @>{\calF}>> \Ind_{\wt Q_{\phi_i}}^{U(\wt{W}_{\phi_i})}\left(\rho_i\wt\chi\boxtimes\pi\chi\right)
\end{CD}
\end{equation*}
where the horizontal arrow
\begin{align*}
\calF:\Ind_{\wt Q_{\phi_i}}^{U(\wt{W}_{\phi_i})}\left(\rho_i\boxtimes\pi\right)\otimes(\chi\circ\det)&\lra\Ind_{\wt Q_{\phi_i}}^{U(\wt{W}_{\phi_i})}\left(\rho_i\wt\chi\boxtimes\pi\chi\right)
\end{align*}
is given by
\[
	\calF(\varPhi)(g)=\chi(\det g)\varPhi(g)
\]
for $\varPhi\in\Ind_{\wt Q_{\phi_i}}^{U(\wt{W}_{\phi_i})}\left(\rho_i\boxtimes\pi\right)$. Here we realize $\Ind_{\wt Q_{\phi_i}}^{U(\wt{W}_{\phi_i})}\left(\rho_i\boxtimes\pi\right)\otimes(\chi\circ\det)$ on the same space as $\Ind_{\wt Q_{\phi_i}}^{U(\wt{W}_{\phi_i})}\left(\rho_i\boxtimes\pi\right)$, but with the action twisted by $\chi$. The desired equality follows from this commutative diagram easily.
\end{proof}
Combining these two lemmas, we get
\begin{proposition}\label{LLC.Twist.Character}
Let $\pi=\pi(\phi,\eta)$ be the representation of $U(W^\epsilon)$, where $\epsilon=\eta(z_\phi)$. Let $\chi$ be a character of $E^1$. Then
\[
	\pi\chi=\pi\left(\phi\cdot\widetilde{\chi},\eta\right).
\]
Here we use the obvious isomorphism between $\calS_\phi$ and $\calS_{\phi_\chi}$ to identify them.
\end{proposition}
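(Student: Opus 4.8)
The plan is to read off the statement by combining the two preceding lemmas. Observe first that the base change $\wt\chi$ is conjugate-orthogonal: for $x\in F^\times$ we have $\wt\chi(x)=\chi(x/c(x))=\chi(1)=1$, so $\wt\chi|_{F^\times}=1$, and $\wt\chi^c=\wt\chi^{-1}$. Hence tensoring by $\wt\chi$ carries $\phi=\bigoplus_i m_i\phi_i$ to $\phi\cdot\wt\chi=\bigoplus_i m_i(\phi_i\wt\chi)$, sends irreducible conjugate self-dual constituents of the correct sign to such constituents, and preserves the multiplicities $m_i$; this produces a canonical isomorphism $\calS_\phi\simeq\calS_{\phi\cdot\wt\chi}$ that matches the distinguished bases and carries $z_\phi$ to $z_{\phi\cdot\wt\chi}$. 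Now Lemma \ref{Twist.Character.Parameter} asserts precisely that $\calL(\pi\chi)=\phi\cdot\wt\chi$, so that $\pi\chi\in\Pi_{\phi\cdot\wt\chi}$, while the following lemma asserts that under the above identification $\calJ_\scrW(\pi\chi)=\eta$. Therefore $\pi\chi=\pi(\phi\cdot\wt\chi,\eta)$, which is the claim; this is consistent with property $(5)$ of Theorem \ref{MainTheorem}, since $\eta(z_{\phi\cdot\wt\chi})=\eta(z_\phi)=\epsilon$, so $\pi\chi$ is a representation of the same group $U(W^\epsilon)$ as $\pi$.

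Since the content is entirely in the two lemmas, let me indicate how I would organize them, as that is where any obstacle sits. For the parameter statement I would use compatibility of $\calL$ with Langlands quotients to reduce to $\pi$ tempered, then the LIR (Proposition \ref{LIR.Fin}) together with induction in stages to reduce to $\pi$ square-integrable. Writing $\phi=\bigoplus_i\phi_i$ into pairwise inequivalent irreducible conjugate-symplectic summands, with $\rho_i$ the discrete series of $GL_{k_i}(E)$ attached to $\phi_i$, I would note that $\Ind(\rho_i\wt\chi\boxtimes\pi\chi)\simeq\Ind(\rho_i\boxtimes\pi)\otimes(\chi\circ\det)$ is irreducible since $\Ind(\rho_i\boxtimes\pi)$ is (Corollary \ref{LIR.Reducibility}), and then the LIR forces $\phi_i\wt\chi\subset\calL(\pi\chi)$ for every $i$; summing over $i$ gives $\calL(\pi\chi)=\phi\cdot\wt\chi$.

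For the component-group character, again reduce to $\pi$ tempered; then the LIR reduces the claim $\calJ_\scrW(\pi\chi)=\eta$ to the equality of normalized self-intertwining operators $R(w,\rho_i\boxtimes\pi)=R(w,\rho_i\wt\chi\boxtimes\pi\chi)$ for each conjugate-symplectic $\phi_i\subset\phi$. I would prove this by exhibiting the isomorphism $\calF\colon\varPhi\mapsto\bigl(g\mapsto\chi(\det g)\,\varPhi(g)\bigr)$ from $\Ind(\rho_i\boxtimes\pi)\otimes(\chi\circ\det)$ onto $\Ind(\rho_i\wt\chi\boxtimes\pi\chi)$ and checking that it conjugates one operator into the other. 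The step deserving real care is this last verification: one must check that $\calF$ commutes with the unnormalized operator $\calM(\wt w,-)$ (immediate, as $\chi\circ\det$ is a character of the ambient unitary group), that the normalizing factors agree, $r(w,\rho_i\wt\chi\boxtimes\pi\chi)=r(w,\rho_i\boxtimes\pi)$ — which uses $\wt\chi|_{F^\times}=1$, so that $\gamma(s,\phi_{\rho_i}\wt\chi\otimes(\phi\cdot\wt\chi)^\vee,\psi_E)=\gamma(s,\phi_{\rho_i}\otimes\phi^\vee,\psi_E)$ and $As^{\pm}\circ(\phi_{\rho_i}\wt\chi)=As^{\pm}\circ\phi_{\rho_i}$, while $\lambda(w,\psi)$ does not depend on the inducing $GL$-datum — and that the Whittaker-normalized intertwining isomorphism $\calA_w$ is unchanged by the twist (since $\wt\chi\circ\det$ is trivial on the relevant unipotent $N_{k_i}$). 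Granting these, the square commutes, the two lemmas follow, and with them the Proposition.
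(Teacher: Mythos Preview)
Your proof is correct and follows the paper's approach exactly: the proposition is deduced by combining Lemma \ref{Twist.Character.Parameter} (giving $\calL(\pi\chi)=\phi\cdot\wt\chi$) with the subsequent lemma (giving $\calJ_\scrW(\pi\chi)=\eta$), and your sketches of those two lemmas---reducing via Langlands quotients and the LIR, then using the twisting isomorphism $\calF$ to compare normalized intertwining operators---reproduce the paper's own arguments for them. Your added remarks on why $\wt\chi$ is conjugate-orthogonal and why $\calS_\phi\simeq\calS_{\phi\cdot\wt\chi}$ canonically are helpful elaborations the paper leaves implicit.
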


\subsection{Changes of Whittaker data}
In this subsection, we prove a formula which concerns the behavior of the LLC we constructed for even unitary groups with respect to changes of the Whittaker data.\\

Let $\phi\in\Para(2n)$, and $\pi$ be an irreducible smooth representation of $U(W^\epsilon)$ with $L$-parameter $\phi$. Let $\scrW$ and $\scrW'$ be the two Whittaker data of $U(W^+)$. Recall that in Section \ref{LLC.Construction}, we have constructed two bijections
\[
	\calJ_\scrW:\Pi_\phi\lra\wh{\calS_\phi}\quad\textit{and}\quad\calJ_{\scrW'}:\Pi_\phi\lra\wh{\calS_\phi}.
\]
\begin{proposition}\label{Change.Whit.Data}
Let $\eta=\calJ_\scrW(\pi)$ and $\eta'=\calJ_{\scrW'}(\pi)$. Then we have 
\[
	\eta'=\eta\cdot\eta_-,
\]
where $\eta_-$ is a character of $\calS_\phi$ given by
\begin{equation*}
\eta_-(a)=(-1)^{\dim \phi^a}.
\end{equation*}
for $a\in\calS_\phi$.
\end{proposition}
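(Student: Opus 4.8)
The plan is to reduce the statement to the corresponding fact for odd unitary groups (i.e.\ Mok's LLC together with its extension in Theorem \ref{TheoremOdd}) by transporting the change-of-Whittaker-datum formula through the theta correspondence. Recall that a Whittaker datum $\scrW$ of $U(W^+)$ is pinned down (since $2n$ is even and $\varepsilon=-1$) by the $\Nm_{E/F}(E^\times)$-orbit of a non-trivial additive character $\psi$ of $F$, and in our construction of $\calJ_\scrW$ in Section \ref{LLC.Construction.J} the character $\eta=\eta_\sigma\big|_{\calS_\phi}$ was obtained from the theta lift $\sigma=\theta^{\epsilon'}_{\underline\psi,2n+1}(\pi)$, using the LLC for the odd group $U(V^{\epsilon'})$ together with a fixed datum $\underline\psi=(\psi,\chi_V,\chi_W,\delta)$. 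So the first step is: pick $\psi,\psi'$ with $\scrW=\scrW_\psi$, $\scrW'=\scrW_{\psi'}$, write $\psi'=\psi_a$ for some $a\in F^\times$, and express both $\eta$ and $\eta'$ via the corresponding odd-unitary-group characters $\eta_\sigma$, $\eta_{\sigma'}$.

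The second step is to compare $\eta_\sigma$ and $\eta_{\sigma'}$ on the odd unitary group side. Here there are two sources of discrepancy. First, changing $\psi$ to $\psi_a$ changes the auxiliary data $\underline\psi$, hence changes the theta lift $\sigma$; but the change-of-character behavior of the theta correspondence (via the Weil representation $\omega_{\underline\psi,V,W}$, cf.\ Section \ref{AuxiliaryDatum}) amounts to twisting $\chi_V,\chi_W$ by a character of $E^\times/\Nm(E^\times)$, and by Corollary \ref{ParameterIndependent} and the independence of $\calJ_\scrW$ of $\underline\psi$ (noted in the Remark after the comparison theorem) this does not affect the final outcome beyond the genuine Whittaker shift. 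Second, the known change-of-Whittaker-datum formula for the odd unitary group $U(V^{\epsilon'})$ (which holds because Theorem \ref{TheoremOdd} asserts all properties of Theorem \ref{MainTheorem}, in particular property (11)/(10)-type compatibility, for odd $n$; alternatively one invokes Mok \cite{MR3338302}) tells us that replacing the Whittaker datum multiplies $\eta_\sigma$ by the character $a\mapsto\omega_{E/F}(-1)^{\dim\phi_\sigma^a}$ — note $\dim V^{\epsilon'}$ is odd, so on the odd side this sign is actually trivial, and the genuine change of Whittaker datum on $U(V^{\epsilon'})$ is the \emph{canonical} one when the dimension is odd. The point is then that the \emph{difference} $\scrW$ versus $\scrW'$ on the even group $U(W^+)$, transported through theta, is absorbed by the choice of $\psi$ in the splitting data and surfaces precisely as the sign $(-1)^{\dim\phi^a}$.

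The cleanest way to pin down the sign is to run the argument through the intertwining operators, exactly as in the twisting-by-characters subsection: by Remark \ref{IO.Change.Whit.Data}(2), for the even unitary group $U(W^\epsilon)$ the self-intertwining operator satisfies $R(w,\tau\boxtimes\pi_0,\psi_a)=R(w,\tau\boxtimes\pi_0,\psi)\cdot\omega_\tau(a)$ (since $\dim W$ is even), whereas for the odd unitary groups the analogous operator is $\psi$-independent. Feeding this into the LIR (Proposition \ref{LIR.Fin}), the change $\psi\rightsquigarrow\psi_a$ multiplies the scalar by which $R(w,\tau\boxtimes\pi_0)$ acts on $\pi(\phi,\eta)$ by $\omega_\tau(a)=\omega_{E/F}(a)^{\dim\tau}=\omega_{E/F}(a)^{k}$ for each conjugate self-dual $\tau$ with sign $(-1)^{n-1}$; matching against the LIR formula $\epsilon^k\eta(a_\tau)$ shows that $\calJ_{\scrW'}(\pi)(a_\tau)=\omega_{E/F}(a)^{k}\cdot\calJ_\scrW(\pi)(a_\tau)$. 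Since the $a_\tau$ generate $\calS_\phi$, and since one checks that the global constraint $\eta(z_\phi)=\epsilon$ is preserved (forcing $a$ to be chosen so that $\omega_{E/F}(a)^{\dim\phi}=1$, or rather absorbing this into the normalization), one gets $\eta'=\eta\cdot\eta_-$ with $\eta_-(a)=\omega_{E/F}(-1)^{?}$... and a short computation identifies $\omega_{E/F}(a)^{k}$ with $(-1)^{\dim\phi^{a_\tau}}$ under the correspondence between the two normalizations of the additive character. The main obstacle I anticipate is precisely this last bookkeeping: carefully tracking how the orbit of $\psi$ under $\Nm_{E/F}(E^\times)$ interacts simultaneously with (i) the splitting of the metaplectic cover, (ii) the normalization of the intertwining operator, and (iii) the Whittaker normalization of $\calJ^+_\scrW$ on the odd group, to be sure the accumulated sign is exactly $(-1)^{\dim\phi^a}$ and not, say, $\omega_{E/F}(\text{disc})$-twisted. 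I would handle this by first doing the square-integrable case via the LIR computation above, then tempered-but-not-square-integrable by induction in stages, and finally the general case by compatibility with Langlands quotients — the same three-step scheme as in Lemma \ref{Twist.Character.Parameter}.
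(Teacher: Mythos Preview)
Your third paragraph is essentially the paper's proof: choose $\psi$ with $\scrW=\scrW_\psi$ and $\scrW'=\scrW_{\psi_{a^w}}$ for some $a^w\in F^\times\setminus\Nm_{E/F}(E^\times)$, then combine the LIR (Proposition~\ref{LIR.Fin}) with Remark~\ref{IO.Change.Whit.Data}(2) exactly as in the argument of Corollary~\ref{LIR.Compatibility.2nd}. So the core idea is right and matches the paper.

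Where you hesitate, the bookkeeping is actually clean. For each irreducible conjugate symplectic $\phi_\tau\subset\phi$ of dimension $k$, the central character satisfies $\omega_\tau\big|_{F^\times}=\omega_{E/F}^k$ (this is what ``conjugate symplectic'' means on the determinant), so $\omega_\tau(a^w)=\omega_{E/F}(a^w)^k=(-1)^k=(-1)^{\dim\phi^{a_\tau}}$ on the nose; no further identification is needed. The constraint $\eta'(z_\phi)=\epsilon$ is automatic because $\eta_-(z_\phi)=(-1)^{\dim\phi}=(-1)^{2n}=1$, so there is nothing to ``force'' or absorb into a normalization. Your first two paragraphs, routing the comparison through the theta lift to the odd side, are a detour you correctly abandon: on the odd unitary group the Whittaker datum is canonical, so the change must be tracked through the splitting data $\underline\psi$, which is exactly the delicate bookkeeping you feared; the intertwining-operator route bypasses it entirely. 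Finally, the three-step reduction (square-integrable, tempered, general) is more than needed: the LIR argument works directly for tempered $\pi$, and the non-tempered case follows immediately from the definition of $\calJ_\scrW$ via Langlands quotients and the canonical isomorphism $\calS_\phi\simeq\calS_{\phi_0}$.
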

\begin{proof}
As described in Section \ref{whit.data}, we may choose a non-trivial additive character $\psi$ of $F$, such that
\[
	\scrW=\scrW_\psi\quad\textit{and}\quad\scrW'=\scrW_{\psi_{a^w}},
\]
where $a^w\in F^\times\backslash \Nm_{E/F}(E^\times)$. Then by applying an argument similar to that of Corollary \ref{LIR.Compatibility.2nd}, one can see immediately that the desired formula follows from the LIR and Remark \ref{IO.Change.Whit.Data}.
\end{proof}
Using this formula, we are able to prove the last property listed in our main result Theorem \ref{MainTheorem}, which concerns the behavior of the LLC we constructed with respect to taking contragredient.
\begin{proposition}\label{LLC.Contragredient}
Let $\pi=\pi(\phi,\eta)$ be the representation of $U(W^\epsilon)$, where $\epsilon=\eta(z_\phi)$ (with respect to the Whittaker datum $\scrW$). Then 
\[
	\pi^\vee=\pi\left(\phi^\vee,\eta\cdot\nu\right),
\]
where $\nu$ is a character of $\calS_\phi$ given by 
\begin{equation*}
\nu(a)=\omega_{E/F}(-1)^{\dim \phi^a}
\end{equation*}
for $a\in\calS_\phi$. Here we use the obvious isomorphism between $\calS_\phi$ and $\calS_{\phi^\vee}$ to identify them.
\end{proposition}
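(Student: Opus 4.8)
The plan is to reduce the statement for general $\pi$ to the square-integrable case, exactly as in the proof of Lemma \ref{Twist.Character.Parameter}, and then in the square-integrable case play off the theta correspondence against the known behavior of the LLC for odd unitary groups under taking contragredients (which is property (10) of Theorem \ref{MainTheorem}, available for odd unitary groups via Theorem \ref{TheoremOdd} and Mok's work). First I would reduce to the tempered case: since the LLC we constructed is compatible with Langlands quotients (and since taking contragredient commutes with Langlands quotients and with $\Ind$, sending $\tau_i|\det|^{s_i}$ to $(\tau_i^c)^\vee|\det|^{s_i}$ appropriately), it suffices to treat tempered $\pi$. Then, by induction in stages together with the LIR (Proposition \ref{LIR.Fin}) as in the proof of Lemma \ref{Twist.Character.Parameter}, it suffices to treat square-integrable $\pi$. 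That it suffices to check the identity $\phi_{\pi^\vee}=\phi^\vee$ on irreducible summands, and to check the character identity $\eta_{\pi^\vee}=\eta\cdot\nu$ after pairing with each basis element $a_\rho\in\calS_\phi$, is the mechanism that makes this induction work; the contribution of each irreducible conjugate self-dual summand $\phi_\rho$ is detected by an induced representation $\Ind_{\wt Q_{\phi_\rho}}^{U(\wt W_{\phi_\rho})}(\rho\boxtimes\pi)$ and the action of $R(w,\rho\boxtimes\pi)$ on it.

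For square-integrable $\pi$, write $\phi=\calL(\pi)$ and pick $\epsilon'\in\{\pm1\}$ with $\sigma:=\theta_{2n+1}^{\epsilon'}(\pi)\neq 0$; by Proposition \ref{Preserve.S-I} (and \cite{MR3166215} Corollary C.3) $\sigma$ is square-integrable, with $\phi_\sigma=\phi\chi_V^{-1}\chi_W\oplus\chi_W$ if $\chi_V\not\subset\phi$ and $\phi_\sigma=\phi\chi_V^{-1}\chi_W\oplus\chi_W$ (with $\calS_\phi\simeq\calS_{\phi_\sigma}$) if $\chi_V\subset\phi$, and $\eta=\eta_\sigma|_{\calS_\phi}$. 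The key input is the behavior of theta lifts under contragredient, which is the MVW-involution identity: $\theta_{\underline\psi,V,W}(\pi^\vee)\simeq\theta_{\underline{\psi^{-1}},V,W}(\pi)^\vee$ (more precisely, the MVW functor intertwines $\pi$ and $\pi^\vee$ and commutes with theta up to conjugating the splitting data, and on unitary groups the MVW-involution is inner so $\pi^{\mathrm{MVW}}\simeq\pi^\vee$). Hence $\theta_{2n+1}^{\epsilon'}(\pi^\vee)\neq 0$ as well, and $\phi_{\theta_{2n+1}^{\epsilon'}(\pi^\vee)}=(\phi_\sigma)^\vee$ by property (10) for odd unitary groups — this identity already forces $\phi_{\pi^\vee}=\phi^\vee$ once one notes $(\phi_\sigma)^\vee = \phi^\vee\chi_V^{-c}\chi_W^{-c}\oplus\chi_W^{-1}$ and compares with $\calL_{\underline{\psi'}}(\pi^\vee)$ for the twisted datum $\underline{\psi'}$ with $\psi'=\psi^{-1}$ (using Corollary \ref{ParameterIndependent} to pass back to $\calL$). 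To pin down $\eta_{\pi^\vee}$, I would compare the action of $R(w,\rho\boxtimes\pi^\vee)$ on $\Ind_{\wt Q}^{U(\wt W)}(\rho\boxtimes\pi^\vee)$ with that of $R(w,\rho^{c\vee}\boxtimes\sigma^\vee)$ via the equivariant map of Proposition \ref{equimap} and the intertwining-operator compatibility of Proposition \ref{comp.IO}, tracking how the normalizing factors and the $\calA_w$'s transform under contragredient; by the LIR this converts property (10) for $\sigma$ into the claimed formula for $\pi$, the character $\nu$ arising precisely from the $\omega_{E/F}(-1)^{\dim\phi^a}$ discrepancy between the odd-group $\nu$ (trivial) and the even-group one, combined with the change-of-Whittaker-datum factor $\eta_-$ from Proposition \ref{Change.Whit.Data} that appears because contragredient intertwines the two Whittaker data $\scrW_\psi$ and $\scrW_{\psi^{-1}}$.

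The main obstacle I anticipate is bookkeeping the precise character through the theta correspondence: one must (i) correctly identify how the MVW/contragredient functor interacts with the auxiliary datum $\underline\psi$ (the splitting depends on $\psi,\chi_V,\chi_W$, and contragredient effectively replaces $\psi$ by $\psi^{-1}$ and the $\chi$'s by their conjugate-inverses), (ii) keep track of the resulting change of Whittaker datum on $U(W^+)$ and apply Proposition \ref{Change.Whit.Data} to re-express everything with respect to the fixed $\scrW$, and (iii) verify that the product of all the sign factors — from property (10) for odd groups (which contributes trivially since $2n+1$ is odd), from $\eta_-$, and from the $\gamma$-factor/Asai-factor discrepancies in $\alpha\cdot\beta(0)$ of Proposition \ref{comp.IO} evaluated at the contragredient — collapses to exactly $\nu(a)=\omega_{E/F}(-1)^{\dim\phi^a}$. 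This is the same style of argument as in the proof of Corollary \ref{LIR.Compatibility.2nd}, and I would model the sign computation on \cite{MR3788848} Section 7.3, to which the paper already refers.
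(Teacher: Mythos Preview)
Your approach is correct and uses the same core ingredients as the paper: the MVW identity relating the theta lift of $\pi^\vee$ to (a twist of) the contragredient of the theta lift of $\pi$, property~(10) for odd unitary groups (where $\nu$ is trivial), and the change-of-Whittaker-datum formula from Proposition~\ref{Change.Whit.Data} to pass from $\scrW_{\psi^{-1}}$ back to $\scrW_\psi$.

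That said, you are working harder than necessary in two places. First, the further reduction from tempered to square-integrable is not needed: the paper stops at tempered and runs the theta/MVW argument directly there, since the relevant theta lifts of tempered representations are still tempered and Corollary~\ref{LIR.Compatibility.2nd} applies to any tempered $\pi$. Second, once you have the MVW identity (which the paper writes in the precise form
\[
\theta_{\underline{\psi'},V^{\epsilon'},W^\epsilon}(\pi^\vee\chi_V)\simeq\theta_{\underline{\psi},V^{\epsilon'},W^\epsilon}(\pi)^{\mathrm{MVW}}\chi_W,\qquad \underline{\psi'}=(\psi^{-1},\chi_V,\chi_W,\delta),
\]
rather than conjugating the $\chi$'s), there is no need to re-enter the $\alpha\cdot\beta(0)$ computation of Proposition~\ref{comp.IO}: Corollary~\ref{LIR.Compatibility.2nd} already says that $\eta$ is determined by $\eta_\sigma|_{\calS_\phi}$ for \emph{any} nonvanishing theta lift $\sigma$, regardless of the auxiliary datum. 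Applying it once to each side of the MVW identity gives $\calJ_{\scrW_{\psi^{-1}}}(\pi^\vee\chi_V)=\calJ_{\scrW_\psi}(\pi)$ immediately, and then Proposition~\ref{LLC.Twist.Character} and Proposition~\ref{Change.Whit.Data} finish the job. Your route would work, but the paper's packaging avoids the hands-on intertwining-operator bookkeeping you flag as the main obstacle.
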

\begin{remark}
In \cite{MR3194648}, Kaletha proved such a formula using endoscopic character identities for quasi-split groups. Here, based on Kaletha's results for odd unitary groups, we use an elementary arguement to establish the desired formula for all even unitary groups.
\end{remark}
\begin{proof}[Proof of Proposition \ref{LLC.Contragredient}]
Since the LLC we constructed for even unitary groups is compatible with Langlands quotients, without loss of generality, we may assume that $\pi$ is tempered.\\

Pick up a non-trivial additive character $\psi$ of $F$, such that $\scrW=\scrW_\psi$. Let $\underline{\psi}=(\psi,\chi_V,\chi_W,\delta)$ be a tuple of data as described in Section \ref{AuxiliaryDatum}, and $\epsilon'\in\{\pm1\}$ such that
\[
	\Theta_{\underline{\psi},V^{\epsilon'},W^\epsilon}(\pi)=\theta_{\underline{\psi},V^{\epsilon'},W^\epsilon}(\pi)\neq0.
\]
Then we have 
\[
	\theta_{\underline{\psi'},V^{\epsilon'},W^\epsilon}(\pi^\vee\chi_V)\simeq\theta_{\underline{\psi},V^{\epsilon'},W^\epsilon}(\pi)^{MVW}\chi_W,
\]
where $\underline{\psi'}=(\psi^{-1},\chi_V,\chi_W,\delta)$ (see also \cite{MR3166215} Section 6.1). By applying Lemma \ref{Compatible.Parameter}, Proposition \ref{LLC.Twist.Character}, and Theorem \ref{TheoremOdd} to this equality, we get
\[
	\calL(\pi^\vee)=\phi^\vee.
\]
Moreover, by applying Corollary \ref{LIR.Compatibility.2nd} and Theorem \ref{TheoremOdd} to the same equality, we get
\[
	\calJ_{\scrW_{\psi^{-1}}}(\pi^\vee\chi_V)=\calJ_{\scrW_\psi}(\pi).
\]
It then follows from Proposition \ref{LLC.Twist.Character} and Proposition \ref{Change.Whit.Data} that
\[
	\eta_{\pi^\vee}=\eta\cdot\nu
\]
as desired.
\end{proof}
So now, we have finished proving all properties listed in our main result Theorem \ref{MainTheorem}. 
\bibliographystyle{alpha}
\nocite{*}
\bibliography{LLC4UniRef}

\end{document}